\theoremstyle{definition}
\theoremstyle{plain}
\newtheorem{theorem}{Theorem}[section]
\newtheorem{definition}{Definition}[section]
\newtheorem{remark}{Remark}[section]
\newtheorem{lemma}{Lemma}[section]
\newtheorem{proposition}{Proposition}[section]
\numberwithin{equation}{section}
\newcommand{\vs}{\vspace}
\begin{document}

\title{Almost sure well-posedness and orbital stability for Schr\"odinger equation with potential\footnote{  This work was partially supported by NNSFC (No. 12171493).}}

\author{ Jun Wang$^{a}$, Zhaoyang Yin$^{a, b}$\footnote {Corresponding author. wangj937@mail2.sysu.edu.cn (J. Wang), mcsyzy@mail.sysu.edu.cn (Z. Yin)
} \\
{\small $^{a}$Department of Mathematics, Sun Yat-sen University, Guangzhou, 510275, China } \\
{\small $^{b}$School of Science, Shenzhen Campus of Sun Yat-sen University, Shenzhen, 518107, China } \\
}

	\date{}

	\maketitle

\date{}

 \maketitle \vs{-.7cm}

  \begin{abstract}
In this paper, we
study the almost sure well-posedness theory and orbital stability for the nonlinear Schr\"odinger equation with potential
\begin{equation*}
  \left\{\begin{array}{l}
i \partial_t u+\Delta u-V(x)u+|u|^{2}u=0,\ (x, t) \in \mathbb{R}^4 \times \mathbb{R}, \\
\left.u\right|_{t=0}=f \in  H ^s(\mathbb{R}^4),
\end{array}\right.
\end{equation*}
where $\frac{1}{3}<s<1$ and $V(x):\mathbb{R}^4\rightarrow \mathbb{R}$ satisfies appropriate conditions. The main idea in the proofs is  based on Strichartz spaces
as well as variants of local smoothing, inhomogeneous local
smoothing and maximal function spaces. To our best knowledge, this is the first orbital stability result for this model.
 \end{abstract}

{\footnotesize {\bf   Keywords:} Schr\"odinger equation, Almost sure well-posedness, Orbital stability, Random initial data

{\bf 2010 MSC:}  35L05, 35P25, 35Q55, 35R60.
}

\section{ Introduction and main results}

This paper studies the almost sure well-posedness theory and orbital stability for the nonlinear Schr\"odinger equation with potential
\begin{equation}\label{eq1.1}
\left\{\begin{array}{l}
i \partial_t u+\Delta u-V(x)u +|u|^{2}u=0,\ (x, t) \in \mathbb{R}^4 \times \mathbb{R}, \\
\left.u\right|_{t=0}=f  \in  H ^s(\mathbb{R}^4),
\end{array}\right.
\end{equation}
where $V(x):\mathbb{R}^4\rightarrow \mathbb{R}$ satisfies appropriate conditions.

Consider the following equation
\begin{equation*}
\left\{\begin{array}{l}
i \partial_t u+\Delta u  =\pm|u|^{2}u,\ (x, t) \in \mathbb{R}^4 \times \mathbb{R}, \\
\left.u\right|_{t=0}=f  \in  H ^s(\mathbb{R}^4).
\end{array}\right.
\end{equation*}
For this model under the probabilistic setting, the local well-posedness, small data scattering, and conditional global well-posedness results have been established before. This approach was initiated by Bourgain \cite{{JB1994},{JB1996}} for the periodic nonlinear Schr\"odinger equation in one and two space dimensions, building
upon the constructions of invariant measures by Burq-Tzvetkov \cite{{NBI2008},{NBII2008}} in the context of the cubic nonlinear wave
equation on a three-dimensional compact Riemannian manifold. There has since been a
vast and fascinating body of research, using probabilistic tools to study many nonlinear
dispersive or hyperbolic equations in scaling super-critical regimes, see for example \cite{{JBAB2014},{NBNT2014},{JCTO2012},{YD2012}} and references therein. Previously for the random data local result, Pocovnicu and
Wang's argument \cite{OPYW2018} is based on the dispersive inequality; Dodson et. al \cite{BDJL2019} used the high dimensional version of smoothing effect and maximal function estimates. Recently, \cite{JSAS2023} give some simple new approaches combining the atom space method by Koch-Tataru \cite{HKDT2005} and the variants of bilinear Strichartz estimate.

In this paper, we are interested in the almost sure well-posedness of the Schr\"odinger equation with potential and the orbital stability of the ground state. The potential $V: \mathbb{R}^d \rightarrow \mathbb{R}$ is assumed to satisfy the following assumptions
\begin{equation}\label{eq1.2}
V \in \mathcal{K} \cap L^{\frac{d}{2}}
\end{equation}
and
\begin{equation}\label{eq1.3}
\left\|V_{-}\right\|_{\mathcal{K}}<d(d-2)\alpha(d),
\end{equation}
where $\alpha(d)$ denotes the volume of the unit ball in $\mathbb{R}^d(d\geq3)$, $\mathcal{K}$ is a class of Kato potentials with
$$
\|V\|_{\mathcal{K}}:=\sup\limits_{x \in \mathbb{R}^d} \int_{\mathbb{R}^d} \frac{|V(y)|}{|x-y|} d y
$$
and $V_{-}(x):=\min \{V(x), 0\}$ is the negative part of $V$. Conditions \eqref{eq1.2} and \eqref{eq1.3} are the key to obtain Strichartz estimates, see \cite{JWZY2024}.

The main results of this paper are as follows.
\begin{theorem}\label{t1.1}
Let $\frac{1}{3}<s<1$. Let $f \in H_x^s(\mathbb{R}^4)$ and denote by $f^\omega$ the randomization of $f$ as defined in \eqref{eq2.99}. Then for almost every $\omega \in \Omega$ there exists an open interval $I(0\in I)$ and a unique solution
$$
u(t) \in e^{i t H} f^\omega+C(I ; H_x^1(\mathbb{R}^4))
$$
to the cubic nonlinear Schr\"odinger equation
\begin{equation}\label{eq1.4}
  \left\{\begin{aligned}
&(i \partial_t+H) u+|u|^2 u=0 \text { on } I \times \mathbb{R}^4, \\
&u(0)  =f^\omega .
\end{aligned}\right.
\end{equation}
\end{theorem}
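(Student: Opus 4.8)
The plan is to use Bourgain's ``first approximation'' decomposition: seek the solution in the form $u = e^{itH}f^\omega + v$ with a more regular remainder $v\in C(I;H^1_x)$, and solve for $v$ by a contraction mapping in spaces built from Strichartz, local smoothing, inhomogeneous local smoothing and maximal function estimates for the perturbed propagator $e^{itH}$. Writing $F:=e^{itH}f^\omega$ and using Duhamel's formula, \eqref{eq1.4} becomes the fixed-point equation
\begin{equation*}
v=\Phi(v):=i\int_0^t e^{i(t-t')H}\,\bigl|(F+v)(t')\bigr|^{2}(F+v)(t')\,dt',\qquad v(0)=0 .
\end{equation*}
Expanding the cubic nonlinearity, $|F+v|^2(F+v)=|v|^2 v+(\text{terms with one factor }F)+(\text{terms with two factors }F)+|F|^2 F$. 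The key point is that every term carrying at least one factor of $F$ is controlled using the probabilistic gain of space-time integrability of $F$, while the purely deterministic cubic term $|v|^2v$ is treated by the (energy-critical) $H^1$-Strichartz theory for $e^{itH}$ on a short time interval.

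First I would record the probabilistic linear estimates. Combining the randomization \eqref{eq2.99} with Khintchine's inequality and the Strichartz estimates for $e^{itH}$ established in \cite{JWZY2024}, one obtains that for $f\in H_x^s(\R^4)$ with $\frac13<s<1$ there is an $\omega$-set of probability arbitrarily close to $1$ on which $F=e^{itH}f^\omega$ belongs, on a bounded interval $I$, to: (i) all Strichartz spaces at regularity $s$; (ii) a local smoothing space displaying the usual half-derivative gain over the $H^s$ scaling; and (iii) a maximal function space of $L^2_xL^\infty_t$ type at regularity $s$ --- with all the associated norms small when $|I|$ is small. The hypotheses \eqref{eq1.2}--\eqref{eq1.3} enter here only to ensure that these estimates, and the equivalence of the $H$- and $(-\Delta)$-Sobolev norms, survive the passage from $-\Delta$ to $H=-\Delta+V$.

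Next I would set up the contraction. Let $\|\cdot\|_Z$ be the norm combining the energy-critical Strichartz norm (which controls $C(I;H_x^1)$ together with, say, $L^2_tL^4_x$ at regularity one), a maximal function norm, and an inhomogeneous local smoothing norm, and put $Y=\{v:v(0)=0,\ \|v\|_Z\le R\}$. Using the Strichartz and dual (inhomogeneous) local smoothing bounds for the Duhamel operator attached to $e^{itH}$, I would estimate $\|\Phi(v)\|_Z$ by the sum of the dual-space norms of the four groups of nonlinear terms, and bound them in turn: $\||v|^2v\|\lesssim\|v\|_Z^3$ by the deterministic theory; the mixed terms by H\"older in space-time, placing the full gradient either on a factor of $v$ or --- exploiting the half-derivative gain of local smoothing --- on a factor of $F$, and absorbing the remaining factors of $F$ into the maximal function and Strichartz norms of the previous step; and $\||F|^2F\|$ entirely by the probabilistic bounds. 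Choosing $R$ small and then $|I|$ small (depending on $\omega$) makes $\Phi$ a self-map of $Y$ and a contraction; its fixed point $v\in C(I;H_x^1)$ gives $u=e^{itH}f^\omega+v$, and the same multilinear estimates yield uniqueness in this class.

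The hard part will be the multilinear estimate for the term with two factors of $F$ and one factor of $v$. Because the cubic nonlinearity in $\R^4$ is energy-critical, there is no slack in the exponents, yet one must recover a full derivative although $F$ only carries $H^s$ regularity with $s<1$; this is exactly where the half-derivative gains of local smoothing and inhomogeneous local smoothing, the maximal function estimate, and the probabilistic Strichartz bounds for $F$ must be balanced so that the H\"older relations close, and keeping track of this regularity budget is what forces the restriction $s>\frac13$. A secondary, more technical obstacle is that every estimate above has to be proved for $e^{itH}$ rather than $e^{it\Delta}$, which is where \eqref{eq1.2}--\eqref{eq1.3} and the results of \cite{JWZY2024} are needed.
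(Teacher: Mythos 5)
Your proposal is correct and follows essentially the same route as the paper: decompose $u=e^{itH}f^\omega+v$, prove almost-sure bounds on $F=e^{itH}f^\omega$ in a space combining Strichartz, local-smoothing and maximal-function norms (the paper's $Y$), and close a contraction for $v$ in an $H^1$-critical space (the paper's $X$) by trilinear estimates that place a lateral/local-smoothing norm on the highest-frequency random factor. One small refinement worth noting: the paper organizes the nonlinear estimate not by the number of $F$-factors but by whether the \emph{highest-frequency} factor is a $v$ (then everything goes into $L^1_tL^2_x$) or an $F$ (then the output is placed in the inhomogeneous local-smoothing component $L_{\mathbf{e}_\ell}^{\frac{4}{4-\varepsilon},\frac{4}{2+\varepsilon}}$), so the delicate cases are really those with $F$ at highest frequency rather than specifically the two-$F$-one-$v$ term.
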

\begin{remark}\label{R1.1}
$(1)$ In the statement of Theorem \ref{t1.1}, uniqueness holds in the sense that upon writing $u(t)=e^{itH}f^{\omega}+v(t)$, then there exists a unique local solution $v\in C(I ; H_x^1(\mathbb{R}^4))\cap X(\mathbb{R})$ to the forced cubic nonlinear Schr\"odinger equation
\begin{equation*}
  \left\{\begin{aligned}
&(i \partial_t+H) v+|e^{itH}f^{\omega}+v(t)|^2|e^{itH}f^{\omega}+v(t)|=0 \text { on } I \times \mathbb{R}^4, \\
&v(0)  =0.
\end{aligned}\right.
\end{equation*}

$(2)$ The length of the time interval $I$ has to satisfy $\|e^{itH}f^{\omega}\|_{Y (I)}\leq\delta$ for some small absolute constant $0<\delta\ll1$, where the function space $Y(I)$ is defined in Section 3.

$(3)$ Theorem \ref{t1.1} still holds for the case of defocusing.

$(4)$ From the proof of Theorem \ref{t1.1}, it is clear that our methods easily generalize
to other space dimensions $d\geq4$ and to other power-type nonlinearities with potential.
\end{remark}

To achieve this goal, we prove it by using the classical fixed point theorem. However, the construction of function space is complex. The idea of constructing space is based on the Littlewood-Paley projection, which consists of several Strichartz components and a maximum function type component. For the latter, we defined a lateral space and obtained an estimate of the Schr\"odinger propagator in a four-dimensional lateral space, inspired by the local maximum function estimate and local smoothness estimate established by Ionescu-Kenig \cite{AICK2007}. It should be noted that under appropriate potential function conditions, these estimates still hold true. In order to prove Theorem \ref{t1.1}, it is also necessary to estimate the nonlinear term. In fact, we have developed a new trilinear term estimation to handle all possible interaction terms. For this, we need to estimate in high and low frequencies. Firstly, in the case where the deterministic solution $v$ appears at high frequencies, we use Bernstein estimation and Strichartz component estimation in $X$ and $Y$. However, what is more complex is the occurrence of high frequency in $F$, as it has low regularity. Our goal is to put into the local smoothing type $L_{\mathbf{e}_{\ell}}^{\infty, 2}$ component of the $Y$ space. For the remaining low-frequency terms, we only need to use a combination of the Strichartz component and the maximum function component in $X$ and $Y$ for estimation.

\begin{theorem}\label{t1.2}
Let $\frac{1}{2}<s<1$. Let $f \in H_x^s\left(\mathbb{R}^4\right)$ be radially symmetric. Assume $f^\omega$ be the randomized initial data defined in \eqref{eq2.99}. Then for almost every $\omega \in \Omega$ there exists an open interval $I(0\in I)$ and a unique solution
$$
u(t) \in e^{i t H} f^\omega+C(\mathbb{R} ; H_x^1(\mathbb{R}^4))
$$
to the defocusing cubic nonlinear Schr\"odinger equation
\begin{equation*}
  \left\{\begin{aligned}
&(i \partial_t+H) u=|u|^2 u  \text { on } \mathbb{R} \times \mathbb{R}^4, \\
&u(0)  =f^\omega ,
\end{aligned}\right.
\end{equation*}
which scatters as $t\rightarrow\pm\infty$ in the sense that there exist states $v^{\pm}\in H_x^1(\mathbb{R}^4))$ such that
\begin{equation*}
  \lim\limits_{t\rightarrow\pm\infty}\|u(t)-e^{itH}(f^\omega+v^{\pm})\|_{H_x^1(\mathbb{R}^4)}=0.
\end{equation*}
\end{theorem}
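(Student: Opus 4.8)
\textbf{Proof proposal for Theorem \ref{t1.2}.}

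The plan is to upgrade the local existence result of Theorem \ref{t1.1} to a global-in-time statement with scattering, in the defocusing case and under the extra assumption that $f$ is radial with $s>\tfrac12$. The first step is to record that, by the same fixed point argument used for Theorem \ref{t1.1}, for almost every $\omega$ the Duhamel term $v(t)=u(t)-e^{itH}f^\omega$ solves the forced defocusing equation
\begin{equation*}
(i\partial_t+H)v = |e^{itH}f^\omega+v|^2(e^{itH}f^\omega+v),\qquad v(0)=0,
\end{equation*}
and it suffices to produce an a priori bound on $\|v\|_{C(\mathbb{R};H_x^1)}$ together with a finite global Strichartz-type norm $\|v\|_{X(\mathbb{R})}$; once this is in hand, a standard continuity/bootstrap argument promotes the local solution to a global one and the scattering states $v^\pm$ are constructed by the usual Cauchy-criterion argument, writing $v^\pm=\int_0^{\pm\infty}e^{-isH}F(u(s))\,ds$ and using the dispersive decay of the Duhamel integral in $H_x^1$. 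The key new input over Theorem \ref{t1.1} is therefore the \emph{global} control of $v$.

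To get that global control I would follow the now-standard Burq--Tzvetkov / Dodson--L\"uhrmann--Mendelson scheme for energy-critical stochastic NLS: since $V$ satisfies \eqref{eq1.2}--\eqref{eq1.3}, the operator $H=-\Delta+V$ has an associated energy $E(w)=\tfrac12\int |\nabla w|^2 + \tfrac12\int V|w|^2 + \tfrac14\int|w|^4$ which is equivalent (using \eqref{eq1.3} to control the negative part of $V$) to $\tfrac12\|w\|_{\dot H^1_V}^2+\tfrac14\|w\|_{L^4}^4$, and in particular comparable to the free energy on $H^1_x(\mathbb{R}^4)$. Differentiating $E(v(t))$ along the forced equation produces an error of the form $\mathrm{Re}\int \overline{\partial_t v}\,\big(|u|^2u-|v|^2v\big)$, which after integration by parts expands into a finite sum of multilinear terms each carrying at least one factor of the rough free evolution $f^\omega_L:=e^{itH}f^\omega$ and the rest factors of $v\in H^1_x$. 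The point of randomization (Lemma/estimates referenced via \eqref{eq2.99}) is that $f^\omega_L$ almost surely lies in Strichartz, local smoothing, and maximal-function spaces at regularity $>\tfrac12$, which is exactly the threshold stated; the radial assumption is what makes the four-dimensional maximal/local-smoothing estimates of Ionescu--Kenig type (quoted in the discussion after Theorem \ref{t1.1}) available down to $s>\tfrac12$. Combining these with the energy-critical scattering theory for the \emph{deterministic} defocusing equation $(i\partial_t+H)w=|w|^2w$ on $\mathbb{R}^4$ — which holds for $H=-\Delta+V$ under \eqref{eq1.2}--\eqref{eq1.3} by the concentration-compactness/rigidity method, and which I would invoke as a black box — gives a global spacetime bound on $v$ depending only on $E(f^\omega)$ and on the almost surely finite random norms of $f^\omega_L$.

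Concretely the steps are: (i) set up the perturbed energy and its time derivative; (ii) estimate each multilinear error term by H\"older in the Strichartz / local smoothing / maximal-function spaces, placing $v$ in $L_t^\infty H^1_x \cap X(\mathbb{R})$ and $f^\omega_L$ in the almost surely finite random spaces, so that $\frac{d}{dt}E(v(t))$ is integrable in time against a quantity controlled by $\|v\|_{X(\mathbb{R})}$; (iii) run a bootstrap on a partition of $\mathbb{R}$ into finitely many intervals on which $\|f^\omega_L\|_{Y(I)}\le\delta$ (the number of such intervals is almost surely finite because the global random norm is finite), using the deterministic energy-critical scattering bound to keep the Strichartz norm of $v$ from blowing up across intervals; (iv) conclude $\|v\|_{C(\mathbb{R};H^1_x)}+\|v\|_{X(\mathbb{R})}<\infty$; (v) extract the scattering states. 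The main obstacle is step (ii)–(iii): controlling the Duhamel/energy increment \emph{globally} rather than on a short interval, since the worst interaction — one high-frequency factor of the low-regularity free evolution $f^\omega_L$ paired with two copies of $v$ — must be absorbed using the $L^{\infty,2}_{\mathbf{e}_\ell}$ local-smoothing component of $Y$, and making the finitely-many-intervals bootstrap close requires that the deterministic energy-critical theory for $-\Delta+V$ supply a scattering norm bound that is uniform in the relevant energy range; this is where the hypotheses $s>\tfrac12$, radiality, and \eqref{eq1.2}--\eqref{eq1.3} are all genuinely used.
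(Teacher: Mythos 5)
The paper itself does not prove Theorem \ref{t1.2}; it only offers Remark \ref{R1.2}, which defers to the techniques of \cite{BDJL2019} and explicitly flags that the proof requires a Morawetz estimate for $-\Delta+V$ (possibly under additional hypotheses on $V$). Your proposal is a reasonable high-level reconstruction of that Dodson--L\"uhrmann--Mendelson scheme, and the overall architecture (local theory $\Rightarrow$ a priori $H^1$ control of the Duhamel piece $v$ $\Rightarrow$ long-time perturbation off the deterministic energy-critical theory $\Rightarrow$ scattering states) is the right one. But there is a genuine gap exactly where the paper's remark says there is one.

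Concretely, your steps (ii)--(iii) are circular as written. You want to show $\tfrac{d}{dt}E(v(t))$ is integrable ``against a quantity controlled by $\|v\|_{X(\mathbb{R})}$'', and then use the deterministic scattering black box to control $\|v\|_{X(\mathbb{R})}$ from $\sup_t E(v(t))$. Neither side of this loop is a priori finite; partitioning $\mathbb{R}$ into finitely many intervals with $\|e^{itH}f^\omega\|_{Y(I_j)}\le\delta$ does not break the circularity, because the energy of $v$ at the left endpoint of $I_{j+1}$ is exactly what you have not yet bounded. In \cite{BDJL2019} the loop is broken by a \emph{Lin--Strauss/Morawetz estimate} for the radial defocusing flow, which yields an unconditional spacetime bound (schematically $\int\!\!\int |u|^4/|x|\lesssim \sup_t\|u(t)\|_{\dot H^{1/2}}^2$) that is then fed into the almost-conservation argument to close the energy bootstrap; this is precisely the ingredient Remark \ref{R1.2} singles out as the one that ``may need additional conditions on the potential.'' Your sketch never invokes a Morawetz-type inequality, so the a priori energy bound is not actually obtained.

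Two smaller inaccuracies. First, the radial hypothesis is not what makes the Ionescu--Kenig lateral-space estimates (Lemma \ref{L3.1}) available -- those are non-radial and already used at regularity $s>\tfrac13$ in Theorem \ref{t1.1}. Radiality enters in \cite{BDJL2019} to obtain an almost-sure $L^2_t L^\infty_x$-type bound on $e^{itH}f^\omega$ via radial Sobolev/weighted decay, which is needed both in the Morawetz/energy argument and to push the threshold to $s>\tfrac12$. Second, after expanding $|u|^2u-|v|^2v$ in the energy identity, the hardest error terms are those with \emph{many} copies of $v$ and a single factor of $F=e^{itH}f^\omega$ (e.g.\ $\int |v|^3|F|$), and these are estimated using the $L^2_tL^\infty_x$ bound just mentioned; presenting the errors as uniformly carrying ``at least one factor of $F$'' glosses over which term is the bottleneck. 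In short: your proposal takes the same route the paper gestures at, but it is missing the Morawetz input that the paper itself identifies as the crux, and the bootstrap as stated does not close without it.
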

\begin{remark}\label{R1.2}
The key to proving Theorem \ref{t1.2} lies in the construction of the function space, as shown in Theorem \ref{t1.1}. The rest follows the techniques in \cite{BDJL2019}, which will not be elaborated here. Note that additional conditions may need to be applied to the potential to make Morawetz estimate hold true. But this is just an exercise, we tend to focus more on orbital stability.
\end{remark}
Now, we focus on the existence of ground state solutions, orbital stability and almost sure orbital stability.
\begin{definition}\label{D1.1}
We say that $u_a\in S(a)$ is a ground state solution to \eqref{eq8.2} if it is a solution having minimal
energy among all the solutions which belong to $S(a)$. Namely, if
\begin{equation*}
  I(u_a)=\inf\{I(u),\ u\in S(a),\ (I\left.\right|_{S(a)}'(u))=0\}.
\end{equation*}
\end{definition}
\begin{definition}\label{D1.2}
$Z\subset \mathcal{H}$ is stable if: $Z\neq\emptyset$ and for any $v\in Z$ and any $\varepsilon>0$, there exists a $\delta>0$ such that if $\varphi\in\mathcal{H}$ satisfies $\|\varphi-v\|_{\mathcal{H}}<\delta$ then $u_{\varphi}$ is globally defined and $\inf\limits_{z\in Z}\|u_{\varphi}(t)-z\|_{\mathcal{H}}<\varepsilon$ for all $t\in\mathbb{R}$, where $u_{\varphi}(t)$ is the solution to \eqref{eq8.1} corresponding to the initial data in $\mathcal{H}$.
\end{definition}

\begin{theorem}\label{t1.3}
Let $d\geq3,\ 2<q<2+\frac{4}{d},\ \|V_{-}\|_{\frac{d}{2}}\leq \mathcal{S}$. There exists a $a_0>0$ such that, for any $a\in(0,a_0)$, $I(u)$ restricted to $S(a)$ has a ground state. This ground state is a (local) minimizer of $I(u)$ in the set
$\mathbf{V}(a)$. In addition, if $(u_n)\subset\mathbf{V} (a)$ is such that $I(u_n)\rightarrow m(a)$, then, up to translation, $u_n \rightarrow u \in \mathcal{M}_a$ in $H^1(\mathbb{R}^d, \mathbb{R})$.
\end{theorem}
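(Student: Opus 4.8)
The plan is to prove Theorem~\ref{t1.3} via the constrained minimization method of Jeanjean--Lu type, adapted to the presence of the potential $V$. Since $2<q<2+\frac{4}{d}$, the nonlinearity is $L^2$-subcritical, so the functional $I$ restricted to the sphere $S(a)=\{u\in H^1:\|u\|_2^2=a\}$ is expected to be bounded from below on an appropriate sublevel set, but not globally; this is why one works inside the set $\mathbf{V}(a)$ (a ball or annulus in the gradient norm, cut out by a threshold on $\|\nabla u\|_2$ or on some auxiliary functional) and seeks a \emph{local} minimizer. First I would record the basic a priori bounds: using the Gagliardo--Nirenberg inequality to control the $L^q$ term by $\|\nabla u\|_2^{q\gamma}\|u\|_2^{q(1-\gamma)}$ with $q\gamma<2$, together with the fact that the quadratic form $\int|\nabla u|^2+\int V|u|^2$ is equivalent to $\|\nabla u\|_2^2$ on $H^1$ under the hypotheses \eqref{eq1.2}--\eqref{eq1.3} on $V$ (in particular $\|V_-\|_{d/2}\le\mathcal S$ guarantees coercivity of the Schr\"odinger form). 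This shows $I$ is coercive and bounded below on $\mathbf{V}(a)$, and that for $a$ small the infimum $m(a):=\inf_{\mathbf{V}(a)}I$ is attained strictly inside $\mathbf{V}(a)$, so that any minimizer is a genuine critical point of $I|_{S(a)}$, hence a ground state in the sense of Definition~\ref{D1.1}.

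Next I would run the compactness argument for a minimizing sequence $(u_n)\subset\mathbf{V}(a)$ with $I(u_n)\to m(a)$. By coercivity, $(u_n)$ is bounded in $H^1(\mathbb{R}^d)$, so up to a subsequence $u_n\rightharpoonup u$ weakly in $H^1$ and a.e. The key structural step is a strict subadditivity / binding inequality for $m(a)$ as a function of the mass $a$: I would show $m(a)<m(a_1)+m(a_2)$ whenever $a=a_1+a_2$ with $a_i>0$ (or the weaker form $m(a)<m(a-\tau)+m_\infty(\tau)$ involving the problem at infinity, where the potential is absent), which rules out dichotomy in the concentration-compactness alternative. The potential term requires care here: $\int V|u_n|^2\to\int V|u|^2$ along the weak limit because $V\in L^{d/2}$ is a compact perturbation (Rellich plus H\"older), so splitting of mass to spatial infinity can only \emph{increase} energy, which is precisely what forces $\|u_n\|_2^2\to\|u\|_2^2=a$ and hence strong $L^2$ convergence; interpolation with the $H^1$ bound then upgrades this to strong convergence in $L^q$, and weak lower semicontinuity of the (coercive) quadratic form combined with $I(u_n)\to m(a)$ forces $\|\nabla u_n\|_2\to\|\nabla u\|_2$, giving $u_n\to u$ strongly in $H^1$. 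The translation in the statement accommodates the case where $V$ does not break all translation invariance, or more precisely handles the "problem at infinity"; if $V\to 0$ at infinity one may need to translate the minimizing sequence so that its mass does not escape, and then show the translations stay bounded.

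The main obstacle, and the step I would spend the most effort on, is establishing the strict subadditivity inequality for $m(a)$ together with the sign information $m(a)<0$ (or $m(a)<m_\infty(a)$) needed to exclude vanishing and dichotomy. Without a potential this is standard by a scaling test-function computation using $u_\theta=\theta^{d/q}u(\theta\,\cdot)$ adjusted to preserve mass; with $V$ present one must check that the potential term does not destroy the favorable sign, which is where the smallness $\|V_-\|_{d/2}\le\mathcal S$ and the smallness of $a$ (hence of $a_0$) enter decisively — they keep the quadratic form positive and keep $\mathbf{V}(a)$ in the regime where the $L^q$ gain beats the gradient cost. A secondary technical point is verifying that the constrained minimizer lies in the \emph{interior} of $\mathbf{V}(a)$ for $a<a_0$, so that no boundary term appears in the Euler--Lagrange equation and the Lagrange multiplier structure yields $(I|_{S(a)})'(u)=0$; this is a quantitative consequence of the a priori estimates above. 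Finally, once strong $H^1$ convergence of every minimizing sequence is established, the characterization $m(a)=I(u)$ with $u\in\mathcal M_a$ and the stability statement of Theorem~\ref{t1.3} (in the sense of Definition~\ref{D1.2}) follow from the standard Cazenave--Lions argument: if stability failed there would be a sequence of data converging to $\mathcal M_a$ whose solutions leave a neighborhood of $\mathcal M_a$, and evaluating at those exit times produces a minimizing sequence for $m(a)$ that does not converge to $\mathcal M_a$, contradicting compactness — here one also uses conservation of mass and energy for \eqref{eq8.1}, which holds since the Cauchy problem with potential $V$ is locally well posed in $H^1$.
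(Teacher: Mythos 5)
Your proposal follows essentially the same route as the paper's proof: set up a coercivity threshold $\rho_0$ (via the scaling function $f(a,\rho)$ built from Gagliardo--Nirenberg and Sobolev, using $\|V_-\|_{d/2}\le\mathcal S$ to keep the quadratic form positive), minimize $I$ locally on $\mathbf V(a)=S(a)\cap\mathbf B_{\rho_0}$, use Lions' concentration-compactness to rule out vanishing (since $m(a)<0$), and combine Brezis--Lieb splitting with subadditivity of $m$ and weak lower semicontinuity to upgrade weak to strong $H^1$ convergence up to translation. One small correction: the paper does not prove the unconditional strict subadditivity $m(a)<m(a_1)+m(a_2)$ you assert; it proves $m(a)\le m(\alpha)+m(a-\alpha)$ with strict inequality \emph{only when one of the two infima is attained}, and this conditional version is exactly what is needed in the dichotomy-exclusion step of the compactness lemma.
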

\begin{remark}\label{R1.3}
$(1)$ There exists a ground state which is a real valued, radially symmetric decreasing function.
In fact, if $u\in S(a)$ is a ground state then its Schwartz symmetrization is clearly also a ground state.

$(2)$ Under the assumption of Theorem \ref{t1.3}, it can be proved that, for any $a\in(0, a_0)$,
\begin{equation*}
  \mathcal{M}_a=\{e^{i\theta}u \text{ for some } \theta\in\mathbb{R}, u\in S(a)\cap H^1(\mathbb{R}^d,\mathbb{R}), I(u)=m(a)\}
\end{equation*}
by applying the argument of \cite[Remark 1.4]{{LJJJ2022}}.

$(3)$  Under the assumption of Theorem \ref{t1.3}, additionally, $V\in C^1(\mathbb{R}^d)\cap L^{\frac{d}{2}}(\mathbb{R}^d)$ is bounded and $\nabla V(x)\cdot x$ is bounded. If $u\in S(a)$ is a ground state then the associated Lagrange multiplier $\lambda\in\mathbb{R}$ in \eqref{eq8.2} satisfies $\lambda<0$, see \cite{TBAQ2023}.
\end{remark}
\begin{theorem}\label{t1.4}
Let $d\geq3,\ 2<q<2+\frac{4}{d},\ \|V_{-}\|_{\frac{d}{2}}\leq \mathcal{S}$ and $a_0>0$ be given in Theorem \ref{t1.3}. Then for any $a\in(0,a_0)$, the set $\mathcal{M}_a$ is compact, up to translation, and it is orbitally stable.
\end{theorem}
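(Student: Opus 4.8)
The plan is to deduce orbital stability of $\mathcal{M}_a$ from the variational characterization and compactness statement of Theorem \ref{t1.3}, following the classical Cazenave--Lions concentration-compactness scheme adapted to the local-minimizer setting. First I would record that, by Theorem \ref{t1.3}, for $a\in(0,a_0)$ the set $\mathcal{M}_a$ is nonempty and every minimizing sequence for $I(u)$ on $\mathbf{V}(a)$ converges, up to translation, to an element of $\mathcal{M}_a$ in $H^1(\mathbb{R}^d,\mathbb{R})$; in particular $\mathcal{M}_a$ is compact up to translations. The second ingredient is the conservation of the two relevant quantities along the flow of \eqref{eq8.1}: the energy $I$ and the mass, so that $u_\varphi(t)\in S(\|\varphi\|_2^2\cdot(\text{normalization}))$ with $I(u_\varphi(t))=I(\varphi)$ for all $t$ in the maximal existence interval; this requires the well-posedness theory in $H^1$ for the equation with the potential $V$, which under the stated hypotheses on $V$ is standard (Strichartz estimates from \cite{JWZY2024}).

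The core of the argument is the standard contradiction scheme. Suppose $\mathcal{M}_a$ were not orbitally stable. Then there exist $\varepsilon_0>0$, a sequence of initial data $\varphi_n\to v$ in $\mathcal{H}=H^1$ with $v\in\mathcal{M}_a$, and times $t_n$ such that $\inf_{z\in\mathcal{M}_a}\|u_{\varphi_n}(t_n)-z\|_{H^1}\geq\varepsilon_0$, where $u_{\varphi_n}$ solves \eqref{eq8.1}. Since $\varphi_n\to v\in S(a)$, continuity of $I$ and of the mass on $H^1$ give $I(\varphi_n)\to I(v)=m(a)$ and $\|\varphi_n\|_2^2\to a$; by conservation, $I(u_{\varphi_n}(t_n))\to m(a)$ and $\|u_{\varphi_n}(t_n)\|_2^2\to a$. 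One then shows the sequence $w_n:=u_{\varphi_n}(t_n)$ can be adjusted (rescaling the mass by a factor tending to $1$, which perturbs the $H^1$ norm and the energy negligibly) to land in $S(a)$, and moreover that $w_n\in\mathbf{V}(a)$ for $n$ large: here one uses that $v$ lies in the interior of $\mathbf{V}(a)$ (as a local minimizer) together with the fact that $w_n$ stays close in energy to $m(a)$, so the sublevel structure keeps $w_n$ trapped inside $\mathbf{V}(a)$. Thus $(w_n)$ is a minimizing sequence for $I$ on $\mathbf{V}(a)$, and Theorem \ref{t1.3} forces $w_n\to u\in\mathcal{M}_a$ in $H^1$ up to translation. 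Translating back — and observing $\mathcal{M}_a$ is translation-invariant — yields $\inf_{z\in\mathcal{M}_a}\|w_n-z\|_{H^1}\to0$, contradicting $\inf_{z\in\mathcal{M}_a}\|w_n-z\|_{H^1}\geq\varepsilon_0$. This proves orbital stability, and the compactness up to translation is already contained in Theorem \ref{t1.3}.

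The main obstacle I expect is the step keeping the perturbed orbit inside the constraint set $\mathbf{V}(a)$: because the ground states are only \emph{local} minimizers on the sphere $S(a)$ (the minimization is over the subset $\mathbf{V}(a)$, not all of $S(a)$), one must verify that a small-energy, correct-mass perturbation of an element of $\mathcal{M}_a$ cannot escape $\mathbf{V}(a)$ — this uses a "mountain-pass geometry" type separation, namely that on $\partial\mathbf{V}(a)$ the energy is strictly above $m(a)$, so by a continuity/connectedness argument along $t\mapsto u_{\varphi_n}(t)$ the orbit remains in $\mathbf{V}(a)$ as long as its energy stays below that barrier. A secondary technical point is the mass-renormalization: replacing $w_n$ by $(\sqrt{a}/\|w_n\|_2)\,w_n$ changes $I$ by $o(1)$ and changes the $H^1$ distance to $\mathcal{M}_a$ by $o(1)$, so it does not affect the contradiction; this is routine but must be stated carefully. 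Finally, global existence of $u_{\varphi_n}$ on all of $\mathbb{R}$ (needed so that $t_n$ makes sense) follows from the a priori $H^1$ bound provided by the trapping in $\mathbf{V}(a)$ combined with conservation laws, via a standard continuation argument.
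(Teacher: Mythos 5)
Your proposal is correct and follows essentially the same contradiction/concentration-compactness scheme as the paper: assume failure of stability, use continuity to get $I(\varphi_n)\to m(a)$ and $\|\varphi_n\|_2^2\to a$, use energy and mass conservation plus the barrier $\inf_{\partial\mathbf{V}(a)} I>0>m(a)$ (Lemma~\ref{L8.4}) to keep the orbits trapped in $\mathbf{B}_{\rho_0}$, then invoke the precompactness of minimizing sequences (Lemma~\ref{L8.8}, which is the engine behind Theorem~\ref{t1.3}) to reach a contradiction. One small streamlining: the mass-renormalization step you flag as a secondary technical point is actually unnecessary here, because Lemma~\ref{L8.8} is already stated for sequences $(u_n)\subset\mathbf{B}_{\rho_0}$ with $\|u_n\|_2^2\to a$ (not requiring $u_n\in S(a)$ exactly), so one can feed $u_n:=u_{\varphi_n}(t_n)$ directly into it; the paper does exactly this, also choosing $t_n$ as the \emph{first} time the distance to $\mathcal{M}_a$ equals $\varepsilon_0$ so that the quantity is well-defined. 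Otherwise your argument, including the key trapping observation, is the paper's.
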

\begin{remark}\label{R1.4}
According to the definition of orbitally stable, this is the direct result of Lemma \ref{L8.9}.
\end{remark}
Inspired by Theorem \ref{t1.2} and Definition \ref{D1.2}, we have the following  definition of almost sure orbitally stable.
\begin{definition}\label{D1.3}
$Z\subset \mathcal{H}$ is almost sure stable if: $Z\neq\emptyset$ and for any $v\in Z$ and any $\varepsilon>0$, there exists a $\delta>0$ such that if $\varphi\in H_x^s(\mathbb{R}^d)$ satisfies $\|\varphi-v\|_{H_x^s(\mathbb{R}^d)}<\delta$ then $u_{\varphi}$ is almost sure globally defined and $\inf\limits_{z\in Z}\|u_{\varphi}(t)-z\|_{\mathcal{H}}<\varepsilon$ for all $t\in\mathbb{R}$, where $u_{\varphi}(t)$ is the almost sure solution to \eqref{eq8.1} corresponding to the random initial data in $H_x^s(\mathbb{R}^d)$.
\end{definition}
\begin{remark}\label{R1.5}
According to the definition of orbitally stable and almost sure orbitally stable, their key difference lies in whether they are globally well-posedness or almost sure globally well-posedness. Obviously, orbital stability can immediately obtain almost sure orbitally stable. However, the introduction of this concept can lead to the development of solutions without orbital stability, which are almost sure orbital stability. In other words, under the influence of random initial data, solutions that were once unstable may become stable.
\end{remark}
In section 2, we provide some notations and some important lemma in the proof of main theorems. In the next section, we aim to construct the functional space required to prove Theorem \ref{t1.1}. In sections 4, we aim to estimate nonlinearity. After that, we establish various almost sure bounds for the free evolution of the random data and prove Theorem \ref{t1.1}. Finally, we establish perturbation theory and obtain orbital stability of ground state solution.

\section{Preliminary}
In this section, we provide some notations and some important lemma in the proof of main theorems. Let $\psi \in C_c^{\infty}(\mathbb{R}^4)$ be an even, non-negative bump function with $\operatorname{supp}(\psi) \subseteq B(0,1)$ and such that
$$
\sum_{k \in \mathbb{Z}^4} \psi(\xi-k)=1 \quad \text { for all } \xi \in \mathbb{R}^4.
$$
Let $s \in \mathbb{R}$ and let $f \in H_x^s(\mathbb{R}^4)$. For every $k \in \mathbb{Z}^4$, we define the function $P_k f: \mathbb{R}^4 \rightarrow \mathbb{C}$ by
\begin{equation}\label{eq2.98}
 \left(P_k f\right)(x)=\mathcal{F}^{-1}(\psi(\xi-k) \hat{f}(\xi))(x) \quad \text { for } x \in \mathbb{R}^4.
\end{equation}
We exploit that these Fourier projections satisfy a unit-scale Bernstein inequality, namely for all $1 \leq r_1 \leq r_2 \leq \infty$ and for all $k \in \mathbb{Z}^4$ we have that
$$
\left\|P_k f\right\|_{L_x^{r_2}\left(\mathbb{R}^4\right)} \leq C\left(r_1, r_2\right)\left\|P_k f\right\|_{L_x^{r_1}\left(\mathbb{R}^4\right)}
$$
with a constant that is independent of $k \in \mathbb{Z}^4$.

We let $\left\{g_k\right\}_{k \in \mathbb{Z}^4}$ be a sequence of zero-mean, complex-valued Gaussian random variables on a probability space $(\Omega, \mathcal{A}, \mathbb{P})$. Given a complex-valued function $f \in H_x^s(\mathbb{R}^4)$ for some $s \in \mathbb{R}$, we define its randomization by
\begin{equation}\label{eq2.99}
  f^\omega:=\sum_{k \in \mathbb{Z}^4} g_k(\omega) P_k f.
\end{equation}
This quantity is understood as a Cauchy limit in $L_\omega^2(\Omega ; H_x^s(\mathbb{R}^4))$, and in the sequel, we will restrict ourselves to a subset $\Sigma \subset \Omega$ with $\mathbb{P}(\Sigma)=1$ such that $f^\omega \in H_x^s(\mathbb{R}^4)$ for every $\omega \in \Sigma$. It is worth mentioning that the randomization \eqref{eq2.99} almost surely does not regularize at the level of Sobolev spaces, see for instance \cite{{BDJL2019},{NBI2008}}. However, the free Schr\"odinger evolution $e^{i t H} f^\omega$ of the random data does enjoy various types of significantly improved space-time integrability properties, which crucially enter the proofs of our main results. This phenomenon is akin to the classical results of Paley and Zygmund \cite{REAC1930} on the improved integrability of random Fourier series.

Apart from the unit-scale frequency projections $P_k, k \in \mathbb{Z}^4$, defined in \eqref{eq2.98}, we will also make frequent use of the usual dyadic Littlewood-Paley projections $P_N, N \in 2^{\mathbb{Z}}$, which we introduce next. Let $\varphi \in C_c^{\infty}\left(\mathbb{R}^4\right)$ be a smooth bump function such that $\varphi(\xi)=1$ for $|\xi| \leq 1$ and $\varphi(\xi)=0$ for $|\xi|>2$. We define for every dyadic integer $N \in 2^{\mathbb{Z}}$,
$$
\widehat{P_N f}(\xi):=\left[\varphi(\frac{\xi}{N})-\varphi(\frac{2}{N})\right] \hat{f}(\xi).
$$
Moreover, for each dyadic integer $N \in 2^{\mathbb{Z}}$, we set
$$
\widehat{P_{\leq N} f}(\xi):=\varphi(\frac{\xi}{N}) \hat{f}(\xi), \quad \widehat{P_{>N} f}(\xi):=(1-\varphi(\frac{\xi}{N})) \hat{f}(\xi).
$$
We denote by $\widetilde{P}_N:=P_{\leq 8 N}-P_{\leq N / 8}$ fattened Littlewood-Paley projections with the property that $P_N=P_N \widetilde{P}_N$. Now, we recall the following Bernstein estimates for the dyadic Littlewood-Paley projections.

\begin{lemma}\label{L2.1}
Let $N \in 2^{\mathbb{Z}}$. For any $1 \leq r_1 \leq r_2 \leq \infty$ and any $s \geq 0$, it holds that
$$
\begin{aligned}
\left\|P_N f\right\|_{L_x^{r_2}\left(\mathbb{R}^4\right)} & \lesssim N^{\frac{4}{r_1}-\frac{4}{r_2}}\left\|P_N f\right\|_{L_x^{r_1}\left(\mathbb{R}^4\right)}, \\
\left\|P_{\leq N} f\right\|_{L_x^{r_2}\left(\mathbb{R}^4\right)} & \lesssim N^{\frac{4}{r_1}-\frac{4}{r_2}}\left\|P_{\leq N} f\right\|_{L_x^{r_1}\left(\mathbb{R}^4\right)}, \\
\left\||\nabla|^{ \pm s} P_N f\right\|_{L_x^{r_1}\left(\mathbb{R}^4\right)} & \sim N^{ \pm s}\left\|P_N f\right\|_{L_x^{r_1}\left(\mathbb{R}^4\right)}.
\end{aligned}
$$
\end{lemma}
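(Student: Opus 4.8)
The plan is to obtain all three inequalities from the scaling of the Littlewood--Paley kernels combined with Young's convolution inequality, in the same spirit as the unit-scale Bernstein estimate recorded after \eqref{eq2.98}. For the first bound I would use the identity $P_N = P_N \widetilde{P}_N$ and note that $\widetilde{P}_N$ is convolution with a kernel of the form $N^4 K(N\,\cdot\,)$, where $K=\mathcal{F}^{-1}\chi$ for a fixed $\chi\in C_c^\infty(\mathbb{R}^4\setminus\{0\})$, chosen once and for all, that equals $1$ on the support of $\widehat{P_N f}$ after rescaling to unit frequency, and hence is independent of $N$. Young's inequality with exponents determined by $1+\tfrac{1}{r_2}=\tfrac1r+\tfrac1{r_1}$ gives
\[
\|P_N f\|_{L_x^{r_2}(\mathbb{R}^4)}=\|\widetilde{P}_N(P_N f)\|_{L_x^{r_2}(\mathbb{R}^4)}\le \|N^4 K(N\,\cdot\,)\|_{L_x^{r}(\mathbb{R}^4)}\,\|P_N f\|_{L_x^{r_1}(\mathbb{R}^4)},
\]
and the change of variables $y=Nx$ shows $\|N^4 K(N\,\cdot\,)\|_{L_x^r}=N^{4-4/r}\|K\|_{L_x^r}=N^{4/r_1-4/r_2}\|K\|_{L_x^r}$, since $4-4/r=4(1/r_1-1/r_2)$. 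Because $K$ is Schwartz, $\|K\|_{L_x^r}$ is finite for every $1\le r\le\infty$, which covers the endpoints $r_1=1$ and $r_2=\infty$. The estimate for $P_{\le N}$ is identical: $P_{\le N}$ is itself convolution with $N^4\check{\varphi}(N\,\cdot\,)$ with $\check{\varphi}$ Schwartz (or, equivalently, factor $P_{\le N}=P_{\le N}\widetilde{P}_{\le N}$ through a fattened projection).

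For the homogeneous estimate, on the Fourier support $|\xi|\sim N$ of $P_N f$ the multiplier $|\xi|^{\pm s}$ factors as $N^{\pm s}\,m_\pm(\xi/N)$ with $m_\pm(\eta)=|\eta|^{\pm s}$ smooth and comparable to $1$ on the annulus $|\eta|\sim 1$; thus $m_\pm\chi\in C_c^\infty(\mathbb{R}^4\setminus\{0\})$, and $|\nabla|^{\pm s}P_N f=N^{\pm s}\,T_\pm^{(N)}(P_N f)$, where $T_\pm^{(N)}$ is convolution with $N^4(\mathcal{F}^{-1}(m_\pm\chi))(N\,\cdot\,)$, whose $L^1$ norm equals $\|\mathcal{F}^{-1}(m_\pm\chi)\|_{L_x^1}$, a finite constant independent of $N$. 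Young's inequality with $L^1$ yields $\||\nabla|^{\pm s}P_N f\|_{L_x^{r_1}}\lesssim N^{\pm s}\|P_N f\|_{L_x^{r_1}}$. For the reverse inequality, since $|\nabla|^{\pm s}P_N f$ is again frequency-localized to $|\xi|\sim N$, I would write $P_N f=|\nabla|^{\mp s}\widetilde{P}_N(|\nabla|^{\pm s}P_N f)=N^{\mp s}\widetilde{T}_\mp^{(N)}(|\nabla|^{\pm s}P_N f)$, with $\widetilde{T}_\mp^{(N)}$ once more a convolution operator whose kernel has $L^1$ norm uniform in $N$, so that $\|P_N f\|_{L_x^{r_1}}\lesssim N^{\mp s}\||\nabla|^{\pm s}P_N f\|_{L_x^{r_1}}$. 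Combining the two directions gives the equivalence $\sim$.

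I do not expect any genuine obstacle here; the argument is entirely routine. The only points that need a little care are: fixing the auxiliary cutoffs $\chi$ and $m_\pm\chi$ once and for all so that every implicit constant is uniform in $N\in 2^{\mathbb{Z}}$; recording that the relevant convolution kernels are Schwartz, which is what makes the endpoints $r_1=1$ and $r_2=\infty$ legitimate; and observing, for the homogeneous estimate, that $|\eta|^{-s}$ is smooth away from the origin, so the frequency localization to the annulus $|\xi|\sim N$ (as opposed to $|\xi|\le N$) is genuinely used. None of these causes any difficulty.
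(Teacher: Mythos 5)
Your proof is correct and is the standard argument; the paper itself states Lemma \ref{L2.1} as a recalled fact without proof, so there is no alternative approach in the text to compare against. One small point of care you already anticipate: for the $P_{\le N}$ bound, the direct convolution reading of $P_{\le N}$ gives a bound against $\|f\|_{L^{r_1}}$ rather than $\|P_{\le N}f\|_{L^{r_1}}$, so you genuinely do need the factorization through a fattened low-pass projection (e.g.\ $P_{\le N}=P_{\le 2N}P_{\le N}$) rather than the ``or, equivalently'' reading; otherwise the scaling, Young's inequality, and the rescaled-multiplier treatment of $|\nabla|^{\pm s}$ are all sound and yield constants uniform in $N\in 2^{\mathbb{Z}}$ as required.
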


\begin{lemma}\label{L2.2}(\cite{VDD2018}) Let $F(z)=|z|^k z$ with $k>0, s \geq 0$ and $1<p, p_1<\infty, 1<q_1 \leq \infty$ satisfying $\frac{1}{p}=\frac{1}{p_1}+\frac{k}{q_1}$. If $k$ is an even integer or $k$ is not an even integer with $[s] \leq k$, then there exists $C>0$ such that for all $u \in \mathscr{S}$,
$$
\|F(u)\|_{\dot{W}^{s, p}} \leq C\|u\|_{L^{q_1}}^k\|u\|_{\dot{W}^{s, p_1}} .
$$
A similar estimate holds with $\dot{W}^{s, p}, \dot{W}^{s, p_1}$-norms replaced by $W^{s, p}, W^{s, p_1}$ norms.
\end{lemma}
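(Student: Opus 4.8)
Since the statement is the classical fractional chain rule (attributed to \cite{VDD2018}), the plan is to reproduce the standard argument: peel off the integer part of $s$ with the ordinary Leibniz and chain rules, treat the remaining derivative of order $<1$ with the Gagliardo difference characterization, and finally redistribute the derivatives by Gagliardo--Nirenberg so that the output has exactly $k$ factors in $L^{q_1}$ and one factor in $\dot W^{s,p_1}$. Write $s=m+\sigma$ with $m:=[s]$ and $\sigma:=s-m\in[0,1)$. When $k$ is an even integer, $F(z)=(z\bar z)^{k/2}z$ is a polynomial in $z,\bar z$ and everything below is elementary; for general $k$ the hypothesis $m=[s]\le k$ is exactly what allows the $m$-fold Leibniz/chain rule expansion of $F(u)$ to be carried out with the natural bounds $|D^jF(z)|\lesssim|z|^{k+1-j}$ and $|D^jF(z)-D^jF(w)|\lesssim(|z|^{k-j}+|w|^{k-j})|z-w|$ for $0\le j\le m$. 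In particular, at top order one has the pointwise estimate $|F(u)(x)-F(u)(y)|\lesssim(|u(x)|^k+|u(y)|^k)\,|u(x)-u(y)|$.

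For the integer part, the Leibniz and chain rules express $\nabla^m F(u)$ as a finite linear combination of terms $D^jF(u)\,\prod_{i=1}^{j}\nabla^{a_i}u$ with $a_i\ge1$ and $a_1+\cdots+a_j=m$, so it suffices to bound $\big\||\nabla|^\sigma\big(D^jF(u)\prod_i\nabla^{a_i}u\big)\big\|_{L^p}$ for each such term (with $|\nabla|^0$ the identity when $\sigma=0$). Iterating the fractional Leibniz rule, $\||\nabla|^\sigma(fg)\|_{L^p}\lesssim\||\nabla|^\sigma f\|_{L^{p_2}}\|g\|_{L^{q_2}}+\|f\|_{L^{r_2}}\||\nabla|^\sigma g\|_{L^{t_2}}$ for $0\le\sigma<1$ and admissible Hölder exponents, moves the $|\nabla|^\sigma$ onto a single factor. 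The factor $|\nabla|^\sigma\nabla^{a_i}u$ is controlled by $\|\,|\nabla|^{\sigma+a_i}u\|_{L^\rho}\lesssim\|u\|_{\dot W^{s,p_1}}^{\theta}\|u\|_{L^{q_1}}^{1-\theta}$ via Gagliardo--Nirenberg; the factors $\nabla^{a_i}u$ and $D^jF(u)$ (the latter estimated by $\||u|^{k+1-j}\|_{L^\rho}$, i.e.\ a power of $\|u\|_{L^{(k+1-j)\rho}}$) are likewise redistributed between $L^{q_1}$ and $\dot W^{s,p_1}$ by Gagliardo--Nirenberg and Sobolev embedding; and the only remaining factor carrying a genuine fractional derivative is $|\nabla|^\sigma D^jF(u)$, treated by the order-$\sigma$ chain rule next.

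For the order-$\sigma$ chain rule with $0<\sigma<1$, use the square-function characterization $\||\nabla|^\sigma g\|_{L^p}\sim\big\|\big(\int_{\mathbb{R}^d}|g(\cdot)-g(y)|^2|\cdot-y|^{-d-2\sigma}\,dy\big)^{1/2}\big\|_{L^p}$ (valid for $1<p<\infty$), apply it to $g=D^jF(u)$, and insert the pointwise difference bound $|D^jF(u)(x)-D^jF(u)(y)|\lesssim(|u(x)|^{k-j}+|u(y)|^{k-j})|u(x)-u(y)|$. Splitting the $y$-integral into $\{|u(y)|\le2|u(x)|\}$, where one bounds by $|u(x)|^{k-j}\big(\int|u(x)-u(y)|^2|x-y|^{-d-2\sigma}dy\big)^{1/2}\sim|u(x)|^{k-j}(|\nabla|^\sigma u)(x)$ pointwise, and $\{|u(y)|>2|u(x)|\}$, where $|u(x)-u(y)|\lesssim|u(y)|$ and a dyadic decomposition in $|x-y|$ together with the Hardy--Littlewood maximal function dominate the contribution by $M(|u|^{k-j+1})(x)$-type quantities, one arrives after Hölder and the Fefferman--Stein inequality at a bound $\lesssim\|u\|_{L^{\alpha}}^{k-j}\|\,|\nabla|^\sigma u\|_{L^{\beta}}$ with $\frac1p=\frac{k-j}{\alpha}+\frac1\beta$; a last Gagliardo--Nirenberg step converts this to the target $\|u\|_{L^{q_1}}^{k}\|u\|_{\dot W^{s,p_1}}$. (Alternatively, this step may be quoted from the Christ--Weinstein/Staffilani fractional chain rule as a black box.)

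The main obstacle is the exponent bookkeeping: expanding $\nabla^m F(u)$ and iterating the fractional Leibniz rule produces a fixed but large family of terms, each a product of powers of $u$ carrying integer and fractional derivatives of total order $s$, and one must check that for every such term there is an admissible choice of Gagliardo--Nirenberg/Sobolev interpolation exponents — all parameters in $(0,1)$, summing correctly, compatible with the scaling identity $\frac1p=\frac1{p_1}+\frac k{q_1}$ — reducing it to $\|u\|_{L^{q_1}}^k\|u\|_{\dot W^{s,p_1}}$; it is precisely here that the hypothesis on the exponents is used essentially, and the endpoint cases ($q_1=\infty$, $\sigma=0$, $p=p_1$, or $m=k$) must be verified separately but routinely. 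Finally, the inhomogeneous variant follows from the same scheme applied to $\langle\nabla\rangle^s$ in place of $|\nabla|^s$, together with the zeroth-order estimate $\|F(u)\|_{L^p}\lesssim\|u\|_{L^{q_1}}^k\|u\|_{L^{p_1}}$, which is immediate from Hölder.
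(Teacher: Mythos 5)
The paper offers no proof of this lemma: it is quoted verbatim from the cited reference \cite{VDD2018}, where it is in turn established by the standard Christ--Weinstein/Taylor machinery. Your sketch reproduces exactly that standard argument (split $s=m+\sigma$, Fa\`a di Bruno for the integer part, fractional Leibniz to isolate the factor carrying $|\nabla|^\sigma$, an order-$\sigma$ chain rule for $D^jF(u)$, and Gagliardo--Nirenberg to land on $\|u\|_{L^{q_1}}^k\|u\|_{\dot W^{s,p_1}}$), and you correctly identify where the hypothesis $[s]\le k$ enters, namely in guaranteeing the pointwise bounds $|D^jF(z)|\lesssim|z|^{k+1-j}$ and the Lipschitz-type difference estimate at top order. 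One technical caveat: the Gagliardo/difference square-function characterization of $\||\nabla|^\sigma g\|_{L^p}$ is \emph{not} an equivalence for all $1<p<\infty$; the direction you need ($\||\nabla|^\sigma g\|_{L^p}\lesssim$ square function) requires $p>\frac{2d}{d+2\sigma}$, so for $p$ close to $1$ this route breaks down and one must instead run the Littlewood--Paley argument of Christ--Weinstein (or, as you suggest, quote their chain rule as a black box, which does cover the full range $1<p<\infty$). With that substitution, and granting the deferred but routine exponent bookkeeping that you flag explicitly, the outline is a correct proof of the statement, including the inhomogeneous variant via the trivial H\"older bound on $\|F(u)\|_{L^p}$.
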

We let $\left\{\mathbf{e}_1, \mathbf{e}_2, \mathbf{e}_3, \mathbf{e}_4\right\}$ be an orthonormal basis of $\mathbb{R}^4$ and henceforth fix our coordinate system accordingly. To formulate certain local smoothing estimates for the Schr\"odinger evolution, we will use smooth frequency projections that localize the frequency variable in the direction of an element of the orthonormal basis $\left\{\mathbf{e}_1, \mathbf{e}_2, \mathbf{e}_3, \mathbf{e}_4\right\}$. To this end let $\phi \in C_c^{\infty}(\mathbb{R})$ be a smooth bump function supported around $\sim 1$. For every dyadic integer $N \in 2^{\mathbb{Z}}$ and for every $\ell=1, \ldots, 4$, we define
$$
\widehat{P_{N, \mathbf{e}_{\ell}} f}(\xi):=\phi\left(\frac{\left|\xi \cdot \mathbf{e}_{\ell}\right|}{ N}\right) \hat{f}(\xi).
$$
We may assume that the bump function $\phi$ is chosen so that for all dyadic integers $N \in 2^{\mathbb{Z}}$, the frequency projections satisfy
\begin{equation}\label{eq2.1}
 \left(1-P_{N, \mathbf{e}_1}\right)\left(1-P_{N, \mathbf{e}_2}\right)\left(1-P_{N, \mathbf{e}_3}\right)\left(1-P_{N, \mathbf{e}_4}\right) P_N=0.
\end{equation}

\begin{lemma}\label{L2.3}(see \cite{JWZY2024})
If  $V \in \mathcal{K} \cap L^{\frac{d}{2}}$ and $\left\|V_{-}\right\|_{\mathcal{K}}<d(d-2)\alpha(d)$, then
$$
\left\|\mathcal{L}^{\frac{s}{2}} f\right\|_{L^r} \sim\|f\|_{\dot{W}^{s, r}}, \quad\left\|(1+\mathcal{L})^{\frac{s}{2}} f\right\|_{L^r} \sim\|f\|_{W^{s, r}},
$$
where $1<r<\frac{d}{s}$ and $0 \leq s \leq 2$.
\end{lemma}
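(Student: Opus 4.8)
Write $\mathcal{L}=-\Delta+V$, so that $H=-\mathcal{L}$ and the powers $\mathcal{L}^{s/2}$, $(1+\mathcal{L})^{s/2}$ are defined through the functional calculus of the non-negative self-adjoint operator $\mathcal{L}$. The plan is to reduce the two claimed equivalences to the $L^r$-boundedness of four operators that compare the functional calculi of $\mathcal{L}$ and of $-\Delta$, and to obtain those bounds by interpolating between the endpoints $s=0$ and $s=2$.

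First I would record the two structural facts forced by the hypotheses. The condition $\|V_-\|_{\mathcal{K}}<d(d-2)\alpha(d)$ says exactly that $\sup_x\int_{\mathbb{R}^d}G(x-y)\,|V_-(y)|\,dy<1$, where $G(x)=(d(d-2)\alpha(d))^{-1}|x|^{2-d}$ is the Green function of $-\Delta$; together with $V\in\mathcal{K}\cap L^{d/2}$, classical Kato-class theory then yields that $\mathcal{L}\ge0$ and that the heat semigroup obeys a uniform Gaussian upper bound $|e^{-t\mathcal{L}}(x,y)|\lesssim t^{-d/2}e^{-b|x-y|^2/t}$ for all $t>0$. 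Subordinating this bound, the kernels of $\mathcal{L}^{-1}$ and $(1+\mathcal{L})^{-1}$ are dominated up to constants by $|x-y|^{2-d}$ and $|x-y|^{2-d}e^{-c|x-y|}$ respectively, so both operators map $L^r\to L^{r^*}$ whenever $\frac{1}{r^*}=\frac{1}{r}-\frac{2}{d}$ with $1<r<d/2$; and the Gaussian bound places $\mathcal{L}$ within the reach of the spectral multiplier theorem for operators with heat-kernel Gaussian estimates, so that the imaginary powers $\mathcal{L}^{iy}$ and $(1+\mathcal{L})^{iy}$ are bounded on every $L^r$, $1<r<\infty$, with at most polynomial growth in $y$. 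Granting these, it suffices to prove that
\[
\mathcal{L}^{s/2}(-\Delta)^{-s/2},\qquad (-\Delta)^{s/2}\mathcal{L}^{-s/2},\qquad (1+\mathcal{L})^{s/2}(1-\Delta)^{-s/2},\qquad (1-\Delta)^{s/2}(1+\mathcal{L})^{-s/2}
\]
are bounded on $L^r(\mathbb{R}^d)$ for $1<r<d/s$, since then the identities $\|(-\Delta)^{s/2}f\|_{L^r}=\|f\|_{\dot{W}^{s,r}}$ and $\|(1-\Delta)^{s/2}f\|_{L^r}\sim\|f\|_{W^{s,r}}$ transfer the equivalences to $\mathcal{L}$.

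Next I would dispatch the endpoints. The case $s=0$ is trivial. For $s=2$, write $\mathcal{L}(-\Delta)^{-1}=I+V(-\Delta)^{-1}$: the homogeneous Sobolev embedding $\dot{W}^{2,r}\hookrightarrow L^{r^*}$ — valid precisely because $r<d/2=d/s$ — followed by H\"older's inequality against $V\in L^{d/2}$ shows $V(-\Delta)^{-1}\colon L^r\to L^r$ is bounded, hence so is $\mathcal{L}(-\Delta)^{-1}$. Symmetrically $(-\Delta)\mathcal{L}^{-1}=I-V\mathcal{L}^{-1}$, and since $\mathcal{L}^{-1}\colon L^r\to L^{r^*}$ with the same pair of exponents (from the kernel domination above), the operator $V\mathcal{L}^{-1}\colon L^r\to L^r$ is bounded. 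The two inhomogeneous endpoints are identical with $-\Delta,\mathcal{L},\dot{W}^{2,r}$ replaced by $1-\Delta,1+\mathcal{L},W^{2,r}$. For general $0\le s\le2$, I would run Stein's analytic interpolation on the strip $\{0\le\operatorname{Re}z\le2\}$ for the family $z\mapsto\mathcal{L}^{z/2}(-\Delta)^{-z/2}$: on $\operatorname{Re}z=0$ it factors as a product of imaginary powers of $\mathcal{L}$ and $-\Delta$, each $L^r$-bounded with polynomial growth in $\operatorname{Im}z$, while on $\operatorname{Re}z=2$ one combines the $s=2$ bound with those imaginary-power bounds; interpolation then gives the $L^r$-bound for every real $s\in[0,2]$, and the same argument applies to the other three families.

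The main obstacle is the package of spectral-theoretic inputs in the second paragraph: one must check that the precise assumptions $V\in\mathcal{K}\cap L^{d/2}$ and the \emph{strict} inequality $\|V_-\|_{\mathcal{K}}<d(d-2)\alpha(d)$ genuinely deliver (a) $\mathcal{L}\ge0$ — where the sharp Green-function normalization is used, and which also guarantees $\mathcal{L}$ is injective with dense range so that the homogeneous powers $\mathcal{L}^{\pm s/2}$ make sense — (b) the uniform Gaussian heat-kernel upper bound, and (c) the applicability of the abstract spectral multiplier theorem with polynomial control of the operator norm. Once these hold, the remaining steps (subordination, Sobolev embedding, H\"older, Stein interpolation) are routine, and the threshold $r<d/s$ is seen to be sharp for this scheme, entering exactly in the Sobolev--H\"older estimate at $s=2$. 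As an alternative to interpolation one could, for $0<s<2$, argue directly from the subordination formula $\mathcal{L}^{s/2}=c_s\int_0^\infty(I-e^{-t\mathcal{L}})\,t^{-1-s/2}\,dt$, comparing the resulting square function with its $-\Delta$ analogue via the Gaussian bounds; I would prefer the interpolation route as being shorter.
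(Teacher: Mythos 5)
The paper offers no proof of Lemma \ref{L2.3}; it only cites the reference \cite{JWZY2024}, so there is nothing in-paper to compare against and I can only assess your sketch on its own terms.

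Your overall strategy is sound: reducing the two equivalences to $L^r$-boundedness of the four comparison operators, isolating the needed spectral-theoretic inputs (non-negativity and injectivity of $\mathcal{L}$ from the strict smallness of $\|V_-\|_{\mathcal{K}}$ against the sharp Green-function constant, the uniform Gaussian heat-kernel upper bound, polynomial $L^r$-bounds for the imaginary powers via a Hörmander-type spectral multiplier theorem), and handling the $s=2$ endpoint by writing $\mathcal{L}(-\Delta)^{-1}=I+V(-\Delta)^{-1}$, resp.\ $(-\Delta)\mathcal{L}^{-1}=I-V\mathcal{L}^{-1}$, and invoking the Sobolev embedding $\dot W^{2,r}\hookrightarrow L^{r^*}$ (valid for $1<r<d/2$) together with Hölder against $V\in L^{d/2}$. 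All of that is correct, and you are also right that the substantive work hidden behind the word ``classical'' is verifying the heat-kernel and multiplier facts under exactly these hypotheses.

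There is, however, a genuine gap in the interpolation step, located in your remark that ``the threshold $r<d/s$ \dots enter[s] exactly in the Sobolev--H\"older estimate at $s=2$.'' If you run Stein interpolation on the strip $\{0\le\operatorname{Re}z\le2\}$ for $z\mapsto\mathcal{L}^{z/2}(-\Delta)^{-z/2}$ with a \emph{fixed} exponent $r$ on both boundary lines, the line $\operatorname{Re}z=2$ forces $r<d/2$, and the intermediate conclusion then also only holds for $r<d/2$ — for \emph{every} $s\in(0,2)$, not for $r$ up to $d/s$. For instance with $d=4$, $s=1$ the lemma claims $1<r<4$, whereas your interpolation as written delivers only $1<r<2$. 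To recover the stated range you must interpolate with \emph{different} Lebesgue exponents on the two boundary lines: use $T_{iy}\colon L^{r_0}\to L^{r_0}$ for an arbitrary $r_0\in(1,\infty)$ and $T_{2+iy}\colon L^{r_1}\to L^{r_1}$ for an arbitrary $r_1\in(1,d/2)$; Stein interpolation then gives $T_s\colon L^r\to L^r$ with $\frac{1}{r}=\frac{1-s/2}{r_0}+\frac{s/2}{r_1}$, and letting $r_0\to\infty$ and $r_1\to d/2$ sweeps out the full interval $1<r<d/s$. The same modification is needed for the other three families and for the inhomogeneous case. This is a standard device, but it is not in your sketch, and without it the argument does not reach the threshold claimed in the lemma.
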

\textbf{Notation:}
\begin{itemize}
  \item $C>0$ denotes an absolute constant which only depends on fixed parameters and whose value may change from line to line.
  \item $X \lesssim Y$ indicates that $X \leq C Y$ and we use the notation $X \sim Y$ if $X \lesssim Y \lesssim X$.
  \item $X \lesssim \nu Y$, if the implicit constant depends on a parameter $\nu$.
  \item $X \ll Y$, if the implicit constant should be regarded as small.
  \item $\langle\nabla\rangle:=(1-\Delta)^{\frac{1}{2}}$, $\langle x\rangle:=\left(1+|x|^2\right)^{\frac{1}{2}}$ as well as $\langle N\rangle:=\left(1+N^2\right)^{\frac{1}{2}}$.
\end{itemize}

\section{Functional framework}
In this section we introduce the precise functional framework. We begin by recalling the usual Strichartz estimates for the Schr\"odinger propagator in four space dimensions. An exponent pair $(q, r)$ is called admissible if $2 \leq q, r \leq \infty$ and the following scaling condition is satisfied
$$
\frac{2}{q}+\frac{4}{r}=2.
$$
\begin{proposition}(see \cite{{MKTT1998},{YH2016}})\label{P3.1}
Let $I \subset \mathbb{R}$ be a time interval and let $(q, r),(\tilde{q}, \tilde{r})$ be admissible pairs. Then we have
\begin{equation}\label{eq3.1}
 \left\|e^{i t H} f\right\|_{L_t^q L_x^r\left(I \times \mathbb{R}^4\right)} \lesssim\|f\|_{L_x^2\left(\mathbb{R}^4\right)},
\end{equation}
\begin{equation}\label{eq3.2}
\left\|\int_I e^{-i s H} h(s, \cdot) d s\right\|_{L_x^2\left(\mathbb{R}^4\right)} \lesssim\|h\|_{L_t^{q^{\prime}} L_x^{r^{\prime}}\left(I \times \mathbb{R}^4\right)} .
\end{equation}
Assuming that $0 \in I$ we also have
\begin{equation}\label{eq3.3}
\left\|\int_0^t e^{i(t-s) H} h(s, \cdot) d s\right\|_{L_t^q L_x^r\left(I \times \mathbb{R}^4\right)} \lesssim\|h\|_{L_t^{\tilde{q}^{\prime}} L_x^{\tilde{r}^{\prime}}\left(I \times \mathbb{R}^4\right)}.
\end{equation}
\end{proposition}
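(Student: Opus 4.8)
The strategy is to reduce \eqref{eq3.1}--\eqref{eq3.3} to two structural facts about the propagator $e^{itH}$ --- conservation of the $L^2_x$ norm and pointwise dispersive decay --- and then to feed these into the abstract Strichartz theorem of Keel--Tao \cite{MKTT1998}. As a preliminary I would record that under \eqref{eq1.2}--\eqref{eq1.3} the operator $H=-\Delta+V$ is self-adjoint and non-negative on $L^2(\mathbb{R}^4)$, with spectrum equal to $[0,\infty)$ and purely absolutely continuous; this is standard for Kato-class potentials satisfying the positivity/smallness condition \eqref{eq1.3} (which rules out negative eigenvalues, while $V\in L^{d/2}$ rules out zero-energy resonances and embedded eigenvalues), cf.\ \cite{JWZY2024}. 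In particular $\{e^{itH}\}_{t\in\mathbb{R}}$ is a strongly continuous unitary group on $L^2_x$, so that $\|e^{itH}f\|_{L^\infty_tL^2_x}=\|f\|_{L^2_x}$ with no dependence on the interval $I$.

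The second --- and genuinely substantial --- ingredient is the dispersive estimate
\[
\big\|e^{itH}f\big\|_{L^\infty_x(\mathbb{R}^4)}\;\lesssim\;|t|^{-2}\,\|f\|_{L^1_x(\mathbb{R}^4)},\qquad t\neq0,
\]
i.e.\ the same $|t|^{-d/2}$ decay, with $d=4$, as the free Schr\"odinger group. Under \eqref{eq1.2}--\eqref{eq1.3} this is available from \cite{YH2016} (see also \cite{JWZY2024}); the standard route is perturbative, going through the resolvent identity $R_V(z)=R_0(z)-R_0(z)\,V\,R_V(z)$ and Stone's formula, using the free resolvent bounds together with the smallness of $\|V\|_{\mathcal K}$ to invert $I+R_0(z)V$ uniformly in $z$ on the relevant spaces, and then reading off the $|t|^{-2}$ decay. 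I expect this dispersive estimate to be the main obstacle: the remainder of the argument is soft, whereas the decay bound is precisely where the spectral content of \eqref{eq1.2}--\eqref{eq1.3} (self-adjointness, absence of resonances and of eigenvalues) enters.

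With $\|e^{itH}\|_{L^2_x\to L^2_x}\lesssim1$ and $\|e^{itH}\|_{L^1_x\to L^\infty_x}\lesssim|t|^{-2}$ in hand, the group $U(t)=e^{itH}$ satisfies the hypotheses of the Keel--Tao theorem with decay exponent $\sigma=d/2=2$; their $\sigma$-admissibility condition $\tfrac1q+\tfrac{\sigma}{r}=\tfrac{\sigma}{2}$ with $q,r\ge2$ is exactly the admissibility $\tfrac2q+\tfrac4r=2$ of this section, and since $\sigma=2>1$ the endpoint $(q,r)=(2,4)$ is included. The theorem then yields \eqref{eq3.1}; estimate \eqref{eq3.2} is its adjoint form, the map $h\mapsto\int_Ie^{-isH}h(s)\,ds$ being the $L^2_x$-adjoint of $f\mapsto e^{itH}f$ regarded as an operator $L^2_x\to L^q_tL^r_x$; and the retarded inhomogeneous bound of \cite{MKTT1998}, applied after splitting $I$ at the origin and using the time-reversal symmetry $\overline{e^{itH}f}=e^{-itH}\bar f$ on the negative part, gives \eqref{eq3.3} for all pairs of admissible exponents $(q,r),(\tilde q,\tilde r)$ (on the non-endpoint range one may instead combine \eqref{eq3.1}, \eqref{eq3.2} and the Christ--Kiselev lemma). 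Since none of these ingredients depends on the length of $I$, all implicit constants are uniform in $I$.
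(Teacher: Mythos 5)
Your proof is correct and matches the approach implicit in the paper's citation to Keel--Tao \cite{MKTT1998} and Hong \cite{YH2016}: unitarity of $e^{itH}$ on $L^2_x$ together with the $|t|^{-2}$ dispersive decay (Hong's contribution under the Kato-class and smallness hypotheses \eqref{eq1.2}--\eqref{eq1.3}) fed into the abstract Strichartz theorem, with the dual estimate \eqref{eq3.2} and the retarded inhomogeneous estimate \eqref{eq3.3} following by the adjoint/Christ--Kiselev arguments you describe. The paper itself gives no proof but only this pair of citations, and your reconstruction supplies exactly the ingredients the citations stand for.
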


Next we introduce the lateral spaces $L_{\mathbf{e}_{\ell}}^{p, q}$ and $W_{\mathbf{e}_{\ell}}^{p, q}$, where $\left\{\mathbf{e}_1, \mathbf{e}_2, \mathbf{e}_3, \mathbf{e}_4\right\}$ is a fixed orthonormal basis of $\mathbb{R}^4$. Let $I \subset \mathbb{R}$, we define the lateral spaces $L_{\mathbf{e}_{\ell}}^{p, q}\left(I \times \mathbb{R}^4\right)$$(\ell=1,2,3,4)$ with norms
$$
\|h\|_{L_{\mathbf{e}_{\ell}}^{p, q}\left(I \times \mathbb{R}^4\right)}:=\left(\int_{\mathbb{R}_{x_{\ell}}}\left(\int_I \int_{\mathbb{R}_{x^{\prime}}^3}\left|h\left(t, x_{\ell}, x^{\prime}\right)\right|^q d x^{\prime} d t\right)^{\frac{p}{q}} d x_{\ell}\right)^{\frac{1}{p}}
$$
and lateral spaces $W_{\mathbf{e}_{\ell}}^{p, q}\left(I \times \mathbb{R}^4\right)(\ell=1,2,3,4)$ with norms
$$
\|h\|_{W_{\mathbf{e}_{\ell}}^{p, q}\left(I \times \mathbb{R}^4\right)}:=\left(\int_{\mathbb{R}_{x_{\ell}}}\left(\int_I \int_{\mathbb{R}_{x^{\prime}}^3}\left[\left|\nabla h\left(t, x_{\ell}, x^{\prime}\right)\right|^q+\left|h\left(t, x_{\ell}, x^{\prime}\right)\right|^q\right] d x^{\prime} d t\right)^{\frac{p}{q}} d x_{\ell}\right)^{\frac{1}{p}}
$$
and the usual modifications when $p=\infty$ or $q=\infty$. The most important members of this family of spaces are the local smoothing space $L_{\mathbf{e}_{\ell}}^{\infty, 2}$ and the inhomogeneous local smoothing space $L_{\mathbf{e}_{\ell}}^{1,2}$, which allow us to gain derivatives.

\begin{lemma}\label{L3.1}
Let $I \subset \mathbb{R}$ be a time interval. Let $2 \leq p, q \leq \infty$ with $\frac{1}{p}+\frac{1}{q}=\frac{1}{2}, N \in 2^{\mathbb{Z}}$ any dyadic integer and $\ell \in\{1,2,3,4\}$. Then it holds that
\begin{equation}\label{eq3.4}
  \left\|e^{i t H} P_N f\right\|_{L_{\mathbf{e}_{\ell}}^{p, q}\left(I \times \mathbb{R}^4\right)}  \lesssim N^{\frac{4}{p}-\frac{1}{2}}\|f\|_{L_x^2\left(\mathbb{R}^4\right)}, \quad p \leq q,
\end{equation}
\begin{equation}\label{eq3.5}
\left\|e^{i t H} P_{N, \mathbf{e}_{\ell}} P_N f\right\|_{L_{\mathbf{e}_{\ell}}^{p, q}\left(I \times \mathbb{R}^4\right)}  \lesssim N^{\frac{4}{p}-\frac{1}{2}}\|f\|_{L_x^2\left(\mathbb{R}^4\right)}, \quad p \geq q.
\end{equation}
By duality we also have that
\begin{equation}\label{eq3.6}
 \left\|\int_I e^{-i s H} P_N h(s, \cdot) d s\right\|_{L_x^2\left(\mathbb{R}^4\right)} \lesssim  N^{\frac{4}{p}-\frac{1}{2}}\|h\|_{L_{\mathbf{e}_{\ell}}^{p^{\prime}, q^{\prime}}\left(I \times \mathbb{R}^4\right)}, \quad p \leq q,
\end{equation}
\begin{equation}\label{eq3.7}
  \left\|\int_I e^{-i s H} P_{N, \mathbf{e}_{\ell}} P_N h(s, \cdot) d s\right\|_{L_x^2\left(\mathbb{R}^4\right)} \lesssim  N^{\frac{4}{p}-\frac{1}{2}}\|h\|_{L_{\mathbf{e}_{\ell}}^{p^{\prime}, q^{\prime}}\left(I \times \mathbb{R}^4\right)}, \quad p \geq q.
\end{equation}
\end{lemma}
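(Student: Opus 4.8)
The plan is to reduce the potential Schrödinger propagator $e^{itH}$ to the free propagator $e^{it\Delta}$ and then invoke the known local smoothing and maximal function estimates of Ionescu–Kenig for $e^{it\Delta}$ in dimension four. The reduction is afforded by Lemma 2.3: under the hypotheses $V\in\mathcal K\cap L^{d/2}$ and $\|V_-\|_{\mathcal K}<d(d-2)\alpha(d)$, the operators $\mathcal L^{s/2}$ and $(1+\mathcal L)^{s/2}$ are comparable to the flat fractional derivatives on $L^r$, and more to the point the relevant dispersive/smoothing machinery (fixed-time decay, Strichartz — see \cite{JWZY2024}, Proposition \ref{P3.1}) transfers from $e^{it\Delta}$ to $e^{itH}$. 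So the first step is to record the analog of the Ionescu–Kenig estimates \cite{AICK2007}: for the free evolution in $\mathbb R^4$,
\begin{equation*}
\|e^{it\Delta}P_N f\|_{L_{\mathbf e_\ell}^{p,q}(I\times\mathbb R^4)}\lesssim N^{\frac4p-\frac12}\|f\|_{L_x^2},\quad p\le q,
\end{equation*}
and the companion estimate with the directional projection $P_{N,\mathbf e_\ell}$ in the range $p\ge q$. The endpoint $p=q=4$ is the classical local smoothing estimate $\|e^{it\Delta}P_Nf\|_{L_{\mathbf e_\ell}^{4,4}}\lesssim N^{1/2}\|f\|_{L^2}$ (equivalently $L^\infty_{x_\ell}L^2_{t,x'}$ gains half a derivative), while $p=\infty,q=2$ is the maximal-function-type bound; the intermediate cases follow by interpolation in the lateral variable.

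Next I would carry out the transference. By the Littlewood–Paley localization and Lemma 2.3 it suffices to work at a single dyadic block; since $P_N=P_N\widetilde P_N$, frequencies are bounded between $N/8$ and $8N$, so powers of $\langle\nabla\rangle$ and of $\langle\mathcal L\rangle^{1/2}$ on the range of $\widetilde P_N$ are comparable to $N^{\pm\,\cdot}$ up to constants. One then writes $e^{itH}P_Nf$ via the flat propagator plus a Duhamel correction involving the potential, $e^{itH}=e^{it\Delta}-i\int_0^t e^{i(t-s)\Delta}V e^{isH}\,ds$, and estimates the error term: the lateral norm $L_{\mathbf e_\ell}^{p,q}$ of the Duhamel piece is controlled, after Minkowski/Christ–Kiselev and Hölder in the $3$ transverse spatial variables, by $\|V\|_{L^{d/2}}$ times a Strichartz norm of $e^{isH}f$, which by Proposition \ref{P3.1} is $\lesssim\|f\|_{L^2}$; the Kato condition \eqref{eq1.3} guarantees the smallness needed to absorb the tail or to run a Neumann series. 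The directional-projection variant \eqref{eq3.5}/\eqref{eq3.7} is handled the same way, noting that $P_{N,\mathbf e_\ell}$ commutes with $e^{it\Delta}$ and is disposable (bounded on every $L^r$ and on the lateral spaces) so it can be inserted or removed freely in the error analysis. Finally, \eqref{eq3.6} and \eqref{eq3.7} follow from \eqref{eq3.4} and \eqref{eq3.5} by a $TT^*$/duality argument: $\big\|\int_I e^{-isH}P_N h\,ds\big\|_{L_x^2}$ is dual to testing against $\|f\|_{L_x^2}\le1$ and pairing with $e^{isH}P_N f$, whose lateral norm is bounded by the homogeneous estimate, the exponent ranges $p\le q$ versus $p\ge q$ matching because $(p',q')$ reverses the inequality.

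The main obstacle I expect is the transference step in the full lateral-space scale rather than just in Strichartz spaces: the lateral norms $L_{\mathbf e_\ell}^{p,q}$ with $p\ne q$ single out one spatial direction and an $L^p_{x_\ell}$ outer integration, so one cannot simply quote a ready-made Strichartz-type perturbation lemma for $e^{itH}$. One must check that the potential term, which is not adapted to the foliation by hyperplanes $\{x_\ell=\text{const}\}$, can still be estimated by putting $V$ into $L^{d/2}$ and the free solution into an admissible Strichartz norm via Hölder in the transverse variables $x'\in\mathbb R^3$ together with Bernstein (Lemma 2.1) to trade the lost $x_\ell$-integrability against the frequency localization $N$. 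Keeping the powers of $N$ exactly at $N^{4/p-1/2}$ through this argument — in particular verifying that the potential correction is genuinely lower order and does not degrade the exponent — is the delicate bookkeeping; the smallness hypothesis \eqref{eq1.3} is what makes this work, exactly as it does for the Strichartz estimates cited from \cite{JWZY2024}.
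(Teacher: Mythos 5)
Your overall strategy matches the paper's: pin down the extremal exponent pairs via the Ionescu--Kenig local smoothing and maximal function estimates plus the $L^{4,4}_{\mathbf e_\ell}=L^4_tL^4_x$ case (Strichartz followed by unit-scale Bernstein), interpolate for the intermediate cases, and obtain \eqref{eq3.6}--\eqref{eq3.7} from \eqref{eq3.4}--\eqref{eq3.5} by duality. Where you go beyond the paper is the transference from $e^{it\Delta}$ to $e^{itH}$: the paper simply states the Ionescu--Kenig bounds with $e^{itH}$ in place of $e^{it\Delta}$ and gives no perturbative argument, whereas you propose an explicit Duhamel expansion $e^{itH}=e^{it\Delta}-i\int_0^t e^{i(t-s)\Delta}Ve^{isH}\,ds$ controlled via H\"older in the transverse variables and the Strichartz bounds for $e^{itH}$. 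That is a real gap in the paper's writeup and your sketch, while not fully detailed (the $L^p_{x_\ell}$ outer integration in the error term does need care), at least identifies the issue.

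However, you have mislabeled the two lateral endpoints, and this matters for which half of the lemma your interpolation actually produces. In the convention $L^{p,q}_{\mathbf e_\ell}=L^p_{x_\ell}L^q_{t,x'}$, the local smoothing estimate is the $p=\infty$, $q=2$ case $\|e^{itH}P_{N,\mathbf e_\ell}P_Nf\|_{L^{\infty,2}_{\mathbf e_\ell}}\lesssim N^{-1/2}\|f\|_{L^2}$, and the maximal function estimate is the $p=2$, $q=\infty$ case $\|e^{itH}P_Nf\|_{L^{2,\infty}_{\mathbf e_\ell}}\lesssim N^{3/2}\|f\|_{L^2}$; the pair $p=q=4$ is neither of these, just Strichartz plus Bernstein, and the parenthetical remark that it is ``equivalently $L^\infty_{x_\ell}L^2_{t,x'}$'' confuses the $(4,4)$ exponent with $(\infty,2)$. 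As written, you only name $(4,4)$ and $(\infty,2)$ as interpolation endpoints; that closes \eqref{eq3.5} in the range $p\geq q$ (where no directional projection issue arises between $(4,4)$ and $(\infty,2)$) but not \eqref{eq3.4} for $p\leq q$, which requires interpolating $(4,4)$ against the maximal function endpoint $(2,\infty)$. You should also note the structural reason for the split: the directional projection $P_{N,\mathbf e_\ell}$ is unavoidable only at the local smoothing end, which is why \eqref{eq3.5} and \eqref{eq3.7} carry it while \eqref{eq3.4} and \eqref{eq3.6} do not.
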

\begin{proof}
First, we prove \eqref{eq3.4}. On the one hand, the maximal function estimate from Lemma 4.1 in \cite{AICK2007} asserts that

$$
\left\|e^{i t H} P_N f\right\|_{L_{\mathbf{e}_{\ell}}^{2, \infty}\left(I \times \mathbb{R}^4\right)} \lesssim N^{\frac{3}{2}}\|f\|_{L_x^2\left(\mathbb{R}^4\right)}.
$$
On the other hand, by using Fubini's theorem, Bernstein estimates and \eqref{eq3.4}, it follows that
\begin{eqnarray*}
\left\|e^{i t H} P_N f\right\|_{L_{\mathbf{e}_{\ell}}^{4,4}\left(I \times \mathbb{R}^4\right)} & =&\left\|e^{i t H} P_N f\right\|_{L_t^4 L_x^4\left(I \times \mathbb{R}^4\right)} \lesssim N^{\frac{1}{2}}\left\|e^{i t H} P_N f\right\|_{L_t^4 L_x^{\frac{8}{3}}\left(I \times \mathbb{R}^4\right)} \\
& \lesssim& N^{\frac{1}{2}}\|f\|_{L_x^2\left(\mathbb{R}^4\right)}
\end{eqnarray*}
Hence, the estimate \eqref{eq3.4} follows by interpolation. Analogously, \eqref{eq3.5} is a consequence of the following local smoothing estimate from Lemma 4.2 in \cite{AICK2007} asserts that
$$
\left\|e^{i t H} P_N P_{N, \mathbf{e}_{\ell}} f\right\|_{L_{\mathbf{e}_{\ell}}^{\infty, 2}\left(I \times \mathbb{R}^4\right)} \lesssim N^{-\frac{1}{2}}\|f\|_{L_x^2\left(\mathbb{R}^4\right)}
$$
and interpolation.
\end{proof}

Note that the lateral space norms $\|h\|_{L_{\mathbf{e}_{\ell}}^{p, q}\left(I \times \mathbb{R}^4\right)}$ are continuous as functions of the endpoints of the time interval $I$ and for $p, q<\infty$ have the following time-divisibility property
\begin{equation}\label{eq3.99}
  \left\|\left\{\|h\|_{L_{\mathbf{e}_{\ell}}^{p, q}\left(I_j \times \mathbb{R}^4\right)}\right\}_{j=1}^J\right\|_{\ell_j^{\max \{p, q\}}} \leq\|h\|_{L_{\mathbf{e}_{\ell}}^{p, q}\left(I \times \mathbb{R}^4\right)}
\end{equation}
for any partition of a time interval $I$ into consecutive intervals $I_j, j=1, \ldots, J$, with disjoint interiors. The estimate \eqref{eq3.99} is a consequence of Minkowski's inequality and the embedding properties of the sequence spaces $\ell^r$. For $p, q<\infty$, \eqref{eq3.99} allows to partition the time interval $I$ into a controlled number of subintervals on each of which the restricted norm is arbitrarily small.

Finally, we are ready to give the precise definition of the space $X(I)$ to hold the solutions to the forced cubic NLS on a given time interval $I \subset \mathbb{R}$. It is built from dyadic pieces in the sense that
$$
\|v\|_{X(I)}:=\left(\sum_{N \in 2^\mathbb{Z}}\left\|P_N v\right\|_{X_N(I)}^2\right)^{\frac{1}{2}}.
$$
The dyadic subspace $X_N(I)$ scales at $\dot{H}_x^1\left(\mathbb{R}^4\right)$-regularity and consists of several Strichartz components and a maximal function type component. To provide its precise definition we introduce a fixed, sufficiently small, absolute constant $0<\varepsilon \ll 1$. Throughout this
work it will always be implicitly understood that $\varepsilon$ is chosen sufficiently small so that $\frac{1}{3}+3 \varepsilon \leq s$, where $\frac{1}{3}<s<1$ refers to the Sobolev regularity assumption for the random data in the statement of Theorem \ref{t1.1}. For every dyadic integer $N \in 2^{\mathbb{Z}}$ we then set
\begin{eqnarray*}
\left\|P_N v\right\|_{X_N(I)}&:=&N\left\|P_N v\right\|_{L_t^2 L_x^4\left(I \times \mathbb{R}^4\right)}+N\left\|P_N v\right\|_{L_t^3 L_x^3\left(I \times \mathbb{R}^4\right)}+N\left\|P_N v\right\|_{L_t^6 L_x^{\frac{12}{5}}\left(I \times \mathbb{R}^4\right)}\\
&&+\sum_{\ell=1}^4 N^{-\frac{1}{2}+\varepsilon}\left\|P_N v\right\|_{W_{\mathbf{e}_{\ell}}^{\frac{4}{2-\varepsilon}, \frac{4}{\varepsilon}}\left(I \times \mathbb{R}^4\right)}.
\end{eqnarray*}
We will estimate the forced cubic nonlinearity in the space $G(I)$ which is also built from dyadic pieces
$$
\|h\|_{G(I)}:=\left(\sum_{N \in 2^{\mathbb{Z}}}\left\|P_N h\right\|_{G_N(I)}^2\right)^{\frac{1}{2}}
$$
and whose dyadic subspaces are defined as
$$
\left\|P_N h\right\|_{G_N(I)}:=\inf _{P_N h=h_N^{(1)}+h_N^{(2)}}\left\{N\left\|h_N^{(1)}\right\|_{L_t^1 L_x^2\left(I \times \mathbb{R}^4\right)}+\sum_{\ell=1}^4 N^{\frac{1}{2}+\varepsilon}\left\|h_N^{(2)}\right\|_{L_{\mathbf{e}_{\ell}}^{\frac{4}{4-\varepsilon}, \frac{4}{2+\varepsilon}}\left(I \times \mathbb{R}^4\right)}\right\}.
$$
It will be convenient to introduce a space $Y(I)$, in which we will place the forcing term $F$. As usual, it is built from dyadic pieces in the sense that
$$
\|F\|_{Y(I)}:=\left(\sum_{N \in 2^\mathbb{Z}}\left\|P_N F\right\|_{Y_N(I)}^2\right)^{\frac{1}{2}},
$$
where we set
\begin{eqnarray*}
\left\|P_N F\right\|_{Y_N(I)}&=&\langle N\rangle^{\frac{1}{3}+3 \varepsilon}\left\|P_N F\right\|_{L_t^3 L_x^6\left(I \times \mathbb{R}^4\right)}+ \left\|P_N F\right\|_{L_t^6 L_x^6\left(I \times \mathbb{R}^4\right)}+\sum_{\ell=1}^4 N^{-\frac{1}{6}}\left\|P_N F\right\|_{W_{\mathbf{e}_{\ell}}^{\frac{4}{2-\varepsilon}, \frac{4}{\varepsilon}}(I \times \mathbb{R}^4)} \\
&&+\sum_{\ell=1}^4 \langle N\rangle^{\frac{1}{3}+3 \varepsilon}N^{\frac{1}{2}-\varepsilon}\left\|P_{N, \mathbf{e}_\ell}P_N F\right\|_{L_{\mathbf{e}_{\ell}}^{ \frac{4}{\varepsilon},\frac{4}{2-\varepsilon}}\left(I \times \mathbb{R}^4\right)}.
\end{eqnarray*}

In the next lemma we collect continuity and time-divisibility properties of the $X(I)$ and $Y(I)$ norms that we will repeatedly make use of.

\begin{proposition}(see \cite{{BDJL2019}})\label{P3.2}
(i) Let $I \subset \mathbb{R}$ be a closed interval. Assume that $\|v\|_{X(I)}<\infty$ and $\|F\|_{Y(I)}<\infty$. Then the mappings
$$
 t \mapsto\|v\|_{X([\inf I, t])}, \quad  t \mapsto\|F\|_{Y([\inf I, t])}
$$
and
$$
 t \mapsto\|v\|_{X([t, \sup I])}, \quad  t \mapsto\|F\|_{Y([t, \sup I])}
$$
are continuous with analogous statements for half-open and open intervals.

(ii) Let $I \subset \mathbb{R}$ be an interval and let $v \in X(I), F \in Y(I)$. For any partition of the interval $I$ into consecutive intervals $I_j, j=1, \ldots, J$, with disjoint interiors it holds that
$$
\left\|\left\{\|v\|_{X\left(I_j\right)}\right\}_{j=1}^J\right\|_{\ell_j^{\frac{4}{\varepsilon}}} \leq\|v\|_{X(I)}, \quad\left\|\left\{\|F\|_{Y\left(I_j\right)}\right\}_{j=1}^J\right\|_{\ell_j^{\frac{4}{\varepsilon}}} \leq\|F\|_{Y(I)}.
$$
\end{proposition}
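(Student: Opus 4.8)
\section*{Proof proposal for Proposition \ref{P3.2}}

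The plan is to reduce both statements to the corresponding facts for the individual building blocks of the norms --- the Strichartz pieces $L_t^q L_x^r$ and the lateral pieces $L_{\mathbf{e}_\ell}^{p,q}$, $W_{\mathbf{e}_\ell}^{p,q}$ --- and then to reassemble them using the triangle inequality and Minkowski's inequality. Throughout one uses that, under the standing assumption $\frac13+3\varepsilon\le s<1$, the exponent $\frac4\varepsilon$ is finite and satisfies $\frac4\varepsilon\ge2$, and moreover dominates every time-integrability exponent ($2$, $3$, $6$) and every lateral exponent occurring in the definitions of $X_N(I)$ and $Y_N(I)$; in particular $\max\{\frac4{2-\varepsilon},\frac4\varepsilon\}=\frac4\varepsilon$ and $\max\{\frac4\varepsilon,\frac4{2-\varepsilon}\}=\frac4\varepsilon$.

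For part (ii), I would first record the time-divisibility of each building block. For a Strichartz piece with finite time exponent $q$ one has the exact identity $\|g\|_{L_t^q L_x^r(I\times\mathbb{R}^4)}^q=\sum_j\|g\|_{L_t^q L_x^r(I_j\times\mathbb{R}^4)}^q$, hence $\big\|\{\|g\|_{L_t^qL_x^r(I_j)}\}_j\big\|_{\ell_j^q}=\|g\|_{L_t^qL_x^r(I)}$, and since $q\le\frac4\varepsilon$ the $\ell_j^q$ norm controls the $\ell_j^{4/\varepsilon}$ norm. For a lateral piece $L_{\mathbf{e}_\ell}^{p,q}$ this is exactly \eqref{eq3.99}, which (with $\max\{p,q\}=\frac4\varepsilon$ in every case occurring here, and noting that the fixed projection $P_{N,\mathbf{e}_\ell}P_N$ in the last component of $Y_N$ is independent of the time interval) again bounds the $\ell_j^{4/\varepsilon}$ norm; the same bound for $W_{\mathbf{e}_\ell}^{p,q}$ follows by applying \eqref{eq3.99} to the function $(|\nabla h|^q+|h|^q)^{1/q}$. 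Next, for each fixed dyadic $N$, summing these bounds via the triangle inequality in $\ell_j^{4/\varepsilon}$ over the finitely many components of $X_N$ (weighted by the fixed $N$-dependent constants) gives $\big\|\{\|P_Nv\|_{X_N(I_j)}\}_j\big\|_{\ell_j^{4/\varepsilon}}\le\|P_Nv\|_{X_N(I)}$, and likewise for $Y$. Finally, writing $b_{N,j}=\|P_Nv\|_{X_N(I_j)}$ and recalling that $\|v\|_{X(I_j)}$ is the $\ell_N^2$ norm of $(b_{N,j})_N$, Minkowski's inequality for mixed sequence norms --- legitimate precisely because $\frac4\varepsilon\ge2$ --- yields
$$\Big\|\{\|v\|_{X(I_j)}\}_j\Big\|_{\ell_j^{4/\varepsilon}}=\big\|b\big\|_{\ell_j^{4/\varepsilon}(\ell_N^2)}\le\big\|b\big\|_{\ell_N^2(\ell_j^{4/\varepsilon})}\le\Big(\sum_N\|P_Nv\|_{X_N(I)}^2\Big)^{1/2}=\|v\|_{X(I)},$$
and identically for $F\in Y(I)$.

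For part (i), I would show that each building block, as a function of the moving endpoint, is continuous, and then pass to the full norm by dominated convergence. Fix a dyadic $N$. For a Strichartz piece, $t\mapsto\|P_Nv\|_{L_t^qL_x^r([\inf I,t])}^q=\int_{\inf I}^t\|P_Nv(\tau)\|_{L_x^r}^q\,d\tau$ is absolutely continuous since $\|P_Nv(\cdot)\|_{L_x^r}^q\in L^1(I)$ by the hypothesis $\|v\|_{X(I)}<\infty$. For a lateral piece $L_{\mathbf{e}_\ell}^{p,q}$ with $p,q<\infty$, for a.e.\ fixed $x_\ell$ the inner quantity $\int_{\inf I}^t\!\int_{\mathbb{R}^3}|P_Nv|^q\,dx'\,d\tau$ is continuous and nondecreasing in $t$, hence so is its $\tfrac pq$-th power, which is moreover dominated by its value on all of $I$, a function lying in $L^1(\mathbb{R}_{x_\ell})$; dominated convergence then gives continuity of $t\mapsto\|P_Nv\|_{L_{\mathbf{e}_\ell}^{p,q}([\inf I,t])}^p$, and the same reasoning covers $W_{\mathbf{e}_\ell}^{p,q}$. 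Since $X_N(I)$ and $Y_N(I)$ are finite sums of such pieces, $t\mapsto\|P_Nv\|_{X_N([\inf I,t])}^2$ and $t\mapsto\|P_NF\|_{Y_N([\inf I,t])}^2$ are continuous and bounded above by $\|P_Nv\|_{X_N(I)}^2$ and $\|P_NF\|_{Y_N(I)}^2$ respectively, which are summable over $N$; a final application of dominated convergence for the counting measure transfers the continuity to $t\mapsto\|v\|_{X([\inf I,t])}$ and $t\mapsto\|F\|_{Y([\inf I,t])}$. The statements for $[t,\sup I]$ are identical with the monotonicity reversed, and the half-open and open cases follow since modifying a single endpoint alters none of the time integrals.

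The only genuinely delicate point is the bookkeeping: one must verify that the divisibility exponent of \emph{every} component of $X$ and $Y$ is at most $\frac4\varepsilon$, so that the single exponent $\frac4\varepsilon$ in the statement works uniformly --- this is where the specific choices $\frac4{2-\varepsilon}$, $\frac4\varepsilon$, $\frac4{2+\varepsilon}$, $\frac4{4-\varepsilon}$ in the definitions matter --- and that $\frac4\varepsilon\ge2$, so that Minkowski's inequality may be applied in the direction that pulls the $\ell^{4/\varepsilon}$ time-sum outside the $\ell^2$ frequency-sum. Everything else is a routine application of Fubini's theorem, monotone and dominated convergence, and the triangle inequality, exactly as in \cite{BDJL2019}.
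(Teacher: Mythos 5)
Since the paper provides no proof of this proposition and simply refers to \cite{BDJL2019}, there is nothing in-text to compare against; what you have written is, as far as I can tell, a correct and complete reconstruction of the standard argument, and it is essentially the argument given in \cite{BDJL2019}. Your bookkeeping on exponents is right: each time-divisibility exponent (namely $2,3,6$ for the Strichartz components and $\max\{\tfrac{4}{2-\varepsilon},\tfrac{4}{\varepsilon}\}=\max\{\tfrac{4}{\varepsilon},\tfrac{4}{2-\varepsilon}\}=\tfrac{4}{\varepsilon}$ for the lateral components) is at most $\tfrac{4}{\varepsilon}$ so that $\ell^{4/\varepsilon}_j$ embeds on the correct side, and $\tfrac{4}{\varepsilon}\ge 2$ makes the final Minkowski exchange go in the right direction. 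One small point worth making explicit: in part (ii), after estimating each building block in $\ell^{4/\varepsilon}_j$, you assemble $X_N$ (resp.\ $Y_N$) by the triangle inequality in $\ell^{4/\varepsilon}_j$ applied to the weighted sum of components; this step works precisely because $\ell^{4/\varepsilon}_j$ is a normed space, and it is worth stating so as not to confuse it with the mixed-norm Minkowski step that follows. Likewise, in part (i), the remark that the fixed spatial projection $P_{N,\mathbf{e}_\ell}P_N$ commutes with time-restriction is needed and you correctly flag it. Everything else — absolute continuity of the Strichartz integrals, dominated convergence in $x_\ell$ for the lateral integrals, dominated convergence in $N$ for the dyadic sum — is exactly as it should be.
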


\begin{lemma}\label{L3.2}
Let $J, I \subset \mathbb{R}$ be time intervals with $\sup J \leq \inf I$ and let $N \in 2^{\mathbb{Z}}$ be any dyadic integer. Then we have for any admissible Strichartz pair $(q, r)$ that
\begin{equation}\label{eq3.8}
  N\left\|\int_J e^{i(t-s) H} P_N u(s) d s\right\|_{L_t^q L_x^r\left(I \times \mathbb{R}^4\right)} \lesssim \sum_{\ell=1}^4 N^{\frac{1}{2}+\varepsilon}\left\|P_N u\right\|_{L_{\mathbf{e}_{\ell}}^{\frac{4}{4 -\varepsilon}, \frac{4}{2+\varepsilon}}\left(J \times \mathbb{R}^4\right)}.
\end{equation}
Furthermore, it holds that
\begin{equation}\label{eq3.9}
  \sum_{\ell=1}^4 N^{-\frac{1}{2}+\varepsilon}\left\|\int_J e^{i(t-s) H} P_N u(s) d s\right\|_{L_{\mathbf{e}_{\ell}}^{\frac{4}{4-\varepsilon}, \frac{4}{\varepsilon}}\left(I \times \mathbb{R}^4\right)} \lesssim \sum_{\ell=1}^4 N^{\frac{1}{2}+\varepsilon}\left\|P_N u\right\|_{L_{\mathbf{e}_{\ell}}^{\frac{4}{4-\varepsilon},\frac{4}{2+\varepsilon}}\left(J \times \mathbb{R}^4\right)}.
\end{equation}
\end{lemma}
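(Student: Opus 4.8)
The plan is to reduce both estimates to the dual local smoothing bounds already established in Lemma \ref{L3.1}, exploiting the time ordering $\sup J \leq \inf I$ to split the Duhamel propagator. Write
$$
\int_J e^{i(t-s)H} P_N u(s)\, ds = e^{itH}\Big(\int_J e^{-isH} P_N u(s)\, ds\Big) =: e^{itH} P_N g,
$$
where $g := \int_J e^{-isH} u(s)\, ds \in L_x^2(\mathbb{R}^4)$; this manipulation is legitimate precisely because on the interval $I$ we have $t \geq s$ for all $s \in J$, so no truncation in the $s$-integral is needed and the operator factors cleanly through a fixed $L_x^2$ datum. The key auxiliary estimate is then the dual inhomogeneous local smoothing bound: combining \eqref{eq3.6}--\eqref{eq3.7} with the freedom to insert a frequency projection $P_{N,\mathbf{e}_\ell}$, and choosing the lateral exponents $(p',q') = \big(\tfrac{4}{4-\varepsilon}, \tfrac{4}{2+\varepsilon}\big)$ (whose conjugates $(p,q) = \big(\tfrac{4}{\varepsilon}, \tfrac{4}{2-\varepsilon}\big)$ satisfy $\tfrac1p + \tfrac1q = \tfrac12$ and $p \geq q$, so \eqref{eq3.7} applies with gain $N^{\frac4p - \frac12} = N^{\varepsilon - \frac12}$), one obtains
$$
\|P_N g\|_{L_x^2(\mathbb{R}^4)} \lesssim N^{-\frac12 + \varepsilon}\Big\| \int_J e^{-isH} P_{N,\mathbf{e}_\ell} P_N u(s)\, ds\Big\|\cdots
$$
— more precisely, after summing over $\ell$ and using \eqref{eq2.1} to control $P_N$ by the $P_{N,\mathbf{e}_\ell}P_N$ pieces, $\|P_N g\|_{L_x^2} \lesssim \sum_{\ell=1}^4 N^{-\frac12+\varepsilon}\|P_N u\|_{L_{\mathbf{e}_\ell}^{\frac{4}{4-\varepsilon}, \frac{4}{2+\varepsilon}}(J\times\mathbb{R}^4)}$. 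Actually it is cleaner to keep the raw dual bound $\|P_N g\|_{L_x^2} \lesssim \sum_\ell N^{-\frac12+\varepsilon}\|P_N u\|_{L_{\mathbf{e}_\ell}^{\frac{4}{4-\varepsilon},\frac{4}{2+\varepsilon}}(J)}$ directly from the appropriate dual statement; I would verify the exponent bookkeeping carefully at this point.

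For \eqref{eq3.8}, apply the homogeneous Strichartz estimate \eqref{eq3.1} to $e^{itH}P_N g$, giving
$$
N\|e^{itH}P_N g\|_{L_t^q L_x^r(I\times\mathbb{R}^4)} \lesssim N\|P_N g\|_{L_x^2} \lesssim N \cdot \sum_{\ell=1}^4 N^{-\frac12+\varepsilon}\|P_N u\|_{L_{\mathbf{e}_\ell}^{\frac{4}{4-\varepsilon},\frac{4}{2+\varepsilon}}(J)},
$$
and since $N \cdot N^{-\frac12+\varepsilon} = N^{\frac12+\varepsilon}$ this is exactly the claimed right-hand side. For \eqref{eq3.9}, instead of \eqref{eq3.1} I apply the lateral-space bound \eqref{eq3.4} from Lemma \ref{L3.1} with $(p,q) = \big(\tfrac{4}{4-\varepsilon}, \tfrac{4}{\varepsilon}\big)$: one checks $\tfrac1p + \tfrac1q = \tfrac{4-\varepsilon}{4} + \tfrac{\varepsilon}{4} = 1 \neq \tfrac12$, so in fact the correct pairing to use is $(p,q)=(\tfrac{4}{4-\varepsilon},\tfrac{4}{\varepsilon})$ only after noting $p \leq q$ for small $\varepsilon$ and that the hypothesis of \eqref{eq3.4} is $\tfrac1p+\tfrac1q=\tfrac12$ with $p\le q$ — here I should instead interpolate or directly invoke the estimate in the form stated for $L_{\mathbf{e}_\ell}^{p,q}$; granting that, \eqref{eq3.4} yields $\|e^{itH}P_N g\|_{L_{\mathbf{e}_\ell}^{\frac{4}{4-\varepsilon},\frac{4}{\varepsilon}}(I)} \lesssim N^{(4-\varepsilon) - \frac12}\|P_N g\|_{L_x^2}$ up to the precise power, and multiplying by $N^{-\frac12+\varepsilon}$ and substituting the bound for $\|P_N g\|_{L_x^2}$ closes the estimate after checking the exponents balance.

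The main obstacle is the exponent bookkeeping: one must confirm that the lateral pairs $\big(\tfrac{4}{4-\varepsilon},\tfrac{4}{2+\varepsilon}\big)$ on the source side and $\big(\tfrac{4}{4-\varepsilon},\tfrac{4}{\varepsilon}\big)$ on the target side are each compatible with the hypotheses of Lemma \ref{L3.1} (namely $\tfrac1p+\tfrac1q=\tfrac12$ together with the appropriate ordering $p\lessgtr q$), and that the powers of $N$ produced by the dual smoothing estimate, the Strichartz (or lateral) estimate, and the prefactors $N$, $N^{-\frac12+\varepsilon}$ combine to give precisely $N^{\frac12+\varepsilon}$ on the right. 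If the naive pairs do not satisfy $\tfrac1p+\tfrac1q=\tfrac12$ on the nose, the fix is to obtain them by interpolation between the endpoint estimates in Lemma \ref{L3.1} (the maximal function bound $L_{\mathbf{e}_\ell}^{2,\infty}$ and the Strichartz/local-smoothing endpoints), exactly as in the proof of Lemma \ref{L3.1} itself; this is routine but must be done with care. The time-ordering hypothesis $\sup J \leq \inf I$ is used only once, at the very start, to decouple the Duhamel integral into a free evolution of a fixed $L_x^2$ function, and no further structure of $J$ versus $I$ is needed.
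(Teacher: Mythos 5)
Your proposal is, in structure, exactly the paper's own proof: you peel off $e^{itH}$ using $\sup J\leq\inf I$, reduce both left-hand sides to $N\|P_Ng\|_{L_x^2}$ (via the Strichartz estimate \eqref{eq3.1} for \eqref{eq3.8} and the lateral estimate \eqref{eq3.4} for \eqref{eq3.9}), decompose $P_N u$ through the partition of unity \eqref{eq2.1}, and close with the dual local smoothing bound \eqref{eq3.7} at the exponents $(p',q')=(\tfrac{4}{4-\varepsilon},\tfrac{4}{2+\varepsilon})$, gaining $N^{-\frac12+\varepsilon}$. The bookkeeping you carry out for \eqref{eq3.8} is correct and final.

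On the issue you flag for \eqref{eq3.9}: your suspicion is warranted, but the resolution is not interpolation --- it is a typo in the statement of the lemma. The lateral pair $(\tfrac{4}{4-\varepsilon},\tfrac{4}{\varepsilon})$ on the left of \eqref{eq3.9} indeed fails $\tfrac1p+\tfrac1q=\tfrac12$; the intended pair is $(\tfrac{4}{2-\varepsilon},\tfrac{4}{\varepsilon})$, which is the one appearing in the definition of $X_N(I)$ and the one at which Lemma~\ref{L3.2} is actually invoked in the proof of Lemma~\ref{L3.3} (where the paper sets $(p,q)=(\tfrac{4}{2-\varepsilon},\tfrac{4}{\varepsilon})$). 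With this correction, $\tfrac1p+\tfrac1q=\tfrac{2-\varepsilon}{4}+\tfrac{\varepsilon}{4}=\tfrac12$ with $p\leq q$, so \eqref{eq3.4} applies directly and yields the gain $N^{\frac4p-\frac12}=N^{\frac32-\varepsilon}$; multiplied by the prefactor $N^{-\frac12+\varepsilon}$ this gives the factor $N$ in front of $\|P_Ng\|_{L_x^2}$, exactly as for \eqref{eq3.8}, and the rest of your argument closes the estimate with no interpolation needed. So your proof is correct modulo this one clarification, and it coincides with the paper's.
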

\begin{proof}
The left-hand sides of \eqref{eq3.8} and \eqref{eq3.9} are both bounded by
\begin{equation}\label{eq3.10}
  N\left\|\int_J e^{-i s H} P_N u(s) d s\right\|_{L_x^2\left(\mathbb{R}^4\right)}
\end{equation}
by the Strichartz estimate \eqref{eq3.1} and by the estimate \eqref{eq3.4} for the lateral spaces, respectively. Relying on the identity \eqref{eq2.1}, we now further frequency decompose $P_N u$ into
\begin{eqnarray*}
P_N u&= & P_{N, \mathbf{e}_1} P_N u+P_{N, \mathbf{e}_2}\left(1-P_{N, \mathbf{e}_1}\right) P_N u+P_{N, \mathbf{e}_3}\left(1-P_{N, \mathbf{e}_2}\right)\left(1-P_{N, \mathbf{e}_1}\right) P_N u \\
&& +P_{N, \mathbf{e}_4}\left(1-P_{N, \mathbf{e}_3}\right)\left(1-P_{N, \mathbf{e}_2}\right)\left(1-P_{N, \mathbf{e}_1}\right) P_N u .
\end{eqnarray*}
Using the boundedness of the projections $\left(1-P_{N, \mathbf{e}_{\ell}}\right)$ on $L_x^2(\mathbb{R}^4)$, we can then estimate \eqref{eq3.10} by
$$
\sum_{\ell=1}^4 N\left\|\int_J e^{-i s H} P_{N, \mathbf{e}_{\ell}} P_N h(s) d s\right\|_{L_x^2\left(\mathbb{R}^4\right)}.
$$
Finally, by the dual estimate \eqref{eq3.7} for the lateral spaces we conclude the desired bound
\begin{equation*}
   \sum_{\ell=1}^4 N\left\|\int_J e^{-i s H} P_{N, \mathbf{e}_{\ell}} P_N h(s) d s\right\|_{L_x^2\left(\mathbb{R}^4\right)} \lesssim  \sum_{\ell=1}^4N^{\frac{1}{2}+\varepsilon}\|P_{N}u\|_{L_{\mathbf{e}_{\ell}}^{\frac{4}{4-\varepsilon},\frac{4}{2+\varepsilon}}\left(J \times \mathbb{R}^4\right)}.
\end{equation*}
\end{proof}

\begin{lemma}\label{L3.3}
Let $I \subset \mathbb{R}$ be a time interval with $0=\inf I$ and let $N \in 2^{\mathbb{Z}}$ be any dyadic integer. Then we have for any admissible Strichartz pair $(q, r)$ that
\begin{equation}\label{eq3.11}
      N\left\|\int_0^t e^{i (t-s) H} P_{N}u(s) d s\right\|_{L_t^qL_x^r(I\times\mathbb{R}^4)} \lesssim  \sum_{\ell=1}^4N^{\frac{1}{2}+\varepsilon}\|P_{N}u\|_{L_{\mathbf{e}_{\ell}}^{\frac{4}{4-\varepsilon},\frac{4}{2+\varepsilon}}\left(I \times \mathbb{R}^4\right)}.
\end{equation}
Furthermore, it holds that
\begin{equation}\label{eq3.12}
      \sum_{\ell=1}^4N^{\frac{1}{2}+\varepsilon}\left\|\int_0^t e^{i (t-s) H} P_{N}u(s) d s\right\|_{L_{\mathbf{e}_{\ell}}^{\frac{4}{2-\varepsilon},\frac{4}{\varepsilon}}\left(I \times \mathbb{R}^4\right)} \lesssim  \sum_{\ell=1}^4N^{\frac{1}{2}+\varepsilon}\|P_{N}u\|_{L_{\mathbf{e}_{\ell}}^{\frac{4}{4-\varepsilon},\frac{4}{2+\varepsilon}}\left(I \times \mathbb{R}^4\right)}.
\end{equation}
\end{lemma}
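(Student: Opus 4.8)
The plan is to deduce both \eqref{eq3.11} and \eqref{eq3.12} from Lemma \ref{L3.2} by a Christ--Kiselev argument: \eqref{eq3.11} and \eqref{eq3.12} are the ``retarded'' ($\int_0^t$) counterparts of the ``non-overlapping'' ($\sup J\le\inf I$) estimates \eqref{eq3.8} and \eqref{eq3.9}, and the passage from the non-overlapping form to the retarded form works precisely because the time-integrability exponent governing the input side is strictly smaller than every time-integrability exponent appearing on the output side. To carry this out I would first, after a routine density reduction, tile the triangle $\{(s,t):0\le s\le t\le\sup I\}$ by off-diagonal dyadic boxes. Let $\mathcal{T}$ be the binary tree of subintervals of $I$ obtained by recursively splitting each node into two children carrying equal input lateral norm (such a splitting exists by the continuity part of \eqref{eq3.99}), and for a node $\nu\in\mathcal{T}$ let $L_\nu,R_\nu$ denote its left and right children; then, for a.e.\ $t\in I$,
\begin{equation*}
  \int_0^t e^{i(t-s)H}P_N u(s)\,ds=\sum_{\nu\in\mathcal{T}}\mathbf{1}_{R_\nu}(t)\int_{L_\nu}e^{i(t-s)H}P_N u(s)\,ds .
\end{equation*}
Since $\sup L_\nu\le\inf R_\nu$ for every $\nu$, each summand is estimated on $R_\nu$ directly by Lemma \ref{L3.2} applied with $J=L_\nu$ and interval $R_\nu$, which bounds both the Strichartz output norm (with prefactor $N$) and the lateral output norm $L_{\mathbf{e}_\ell}^{\frac4{2-\varepsilon},\frac4\varepsilon}$ (with prefactor $N^{\frac12+\varepsilon}$) by $\sum_\ell N^{\frac12+\varepsilon}\|P_N u\|_{L_{\mathbf{e}_\ell}^{\frac4{4-\varepsilon},\frac4{2+\varepsilon}}(L_\nu\times\mathbb{R}^4)}$.

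The second step is to sum over $\nu$, organized by depth. By the time-divisibility \eqref{eq3.99}, the input lateral norm $L_{\mathbf{e}_\ell}^{\frac4{4-\varepsilon},\frac4{2+\varepsilon}}$ is subadditive in time in the $\ell^{\max\{4/(4-\varepsilon),\,4/(2+\varepsilon)\}}=\ell^{4/(2+\varepsilon)}$ sense, so the balanced construction of $\mathcal{T}$ forces each of the $2^k$ depth-$k$ intervals to carry input norm $\lesssim 2^{-k(2+\varepsilon)/4}\|P_N u\|_{L_{\mathbf{e}_\ell}^{\frac4{4-\varepsilon},\frac4{2+\varepsilon}}(I\times\mathbb{R}^4)}$. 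On the output side, pieces supported on pairwise disjoint time intervals combine in $L_t^qL_x^r$ with exponent $q$ and in $L_{\mathbf{e}_\ell}^{\frac4{2-\varepsilon},\frac4\varepsilon}$ with exponent $\tfrac4{2-\varepsilon}$. Since the child $L_\nu$ of a depth-$k$ node is one of the depth-$(k+1)$ intervals, while the right children of the $2^k$ depth-$k$ nodes are pairwise disjoint, applying Lemma \ref{L3.2} to each summand and recombining bounds the depth-$k$ contribution by $2^{k(\frac1b-\frac{2+\varepsilon}{4})}$ times the right-hand side of \eqref{eq3.11} (respectively \eqref{eq3.12}), where $b=q$ in the Strichartz case and $b=\tfrac4{2-\varepsilon}$ in the lateral case. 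Because $\tfrac1b-\tfrac{2+\varepsilon}{4}<0$ in both cases --- $\tfrac1q\le\tfrac12<\tfrac{2+\varepsilon}{4}$ when $q\ge2$, and $\tfrac{2-\varepsilon}{4}<\tfrac{2+\varepsilon}{4}$ --- the sum over $k\ge0$ is a convergent geometric series, and \eqref{eq3.11}, \eqref{eq3.12} follow with a constant depending only on $\varepsilon$.

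I expect the only non-routine point to be this adaptation of the Christ--Kiselev summation to the lateral mixed-norm spaces: one has to check carefully that \eqref{eq3.99} provides exactly the time-divisibility exponents needed, namely $\tfrac4{2+\varepsilon}$ for balancing the input and $q$ or $\tfrac4{2-\varepsilon}$ for recombining the output, so that a balanced binary subdivision of $I$ exists and the off-diagonal contributions genuinely decay geometrically in the depth. The decisive strict inequality is $\tfrac4{2+\varepsilon}<\min\{2,\tfrac4{2-\varepsilon}\}$, which holds for the trivial reason that $\varepsilon>0$; granted this, tracking the powers of $N$ is automatic, since every off-diagonal box $L_\nu\times R_\nu$ is controlled by Lemma \ref{L3.2} with one and the same gain (namely $N^{-\frac12+\varepsilon}$ relative to the Strichartz prefactor $N$).
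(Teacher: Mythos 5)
Your proposal is correct and follows essentially the same Christ--Kiselev argument as the paper: decompose $\{(s,t):0\le s\le t\}$ into off-diagonal Whitney boxes whose input pieces are balanced using the time-divisibility property \eqref{eq3.99}, estimate each box by the non-overlapping Lemma~\ref{L3.2}, and sum geometrically in depth using that the input combining exponent $\tfrac{4}{2+\varepsilon}$ is strictly smaller than both output combining exponents ($q$ and $\tfrac{4}{2-\varepsilon}$). Your binary-tree indexing of the Whitney boxes is a mildly cleaner bookkeeping than the paper's enumeration of adjacent intervals $I_j^n,I_{j+1}^n$ at each scale $n$, but the decomposition, the use of Lemma~\ref{L3.2}, and the $2^{-\varepsilon n/2}$-type geometric gain are identical.
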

\begin{proof}
We only prove \eqref{eq3.12} in detail because the proof of \eqref{eq3.11} is similar. Without loss of generality, we may assume that
\begin{equation*}
  \sum_{\ell=1}^4N^{\frac{1}{2}+\varepsilon}\|P_{N}u\|_{L_{\mathbf{e}_{\ell}}^{\frac{4}{4-\varepsilon},\frac{4}{2+\varepsilon}}\left(I \times \mathbb{R}^4\right)}=1.
\end{equation*}
Now, it suffices to verify that
\begin{equation*}
         N^{\frac{1}{2}+\varepsilon}\left\|\int_0^t e^{i (t-s) H} P_{N}u(s) d s\right\|_{L_{\mathbf{e}_{1}}^{\frac{4}{2-\varepsilon},\frac{4}{\varepsilon}}\left(I \times \mathbb{R}^4\right)} \lesssim 1.
\end{equation*}
Using the time divisibility property \eqref{eq3.8} of the lateral spaces, we can proceed inductively to construct for every $n \in \mathbb{N}$ a partition $\left\{I_j^n\right\}_{j=1, \ldots, 2^n}$ of the interval $I$ into consecutive intervals with disjoint interiors such that for $j=1, \ldots, 2^n$,
\begin{equation}\label{eq3.13}
     \sum_{\ell=1}^4N^{\frac{1}{2}+\varepsilon}\|P_{N}u\|_{L_{\mathbf{e}_{\ell}}^{\frac{4}{4-\varepsilon},\frac{4}{2+\varepsilon}}\left(I_j^n \times \mathbb{R}^4\right)}\leq 2^{-(\frac{1}{2}+\frac{\varepsilon}{4})n}.
\end{equation}
We then perform a Whitney type decomposition of the interval $I$ and obtain that for almost every $t_1, t_2 \in I$ with $t_1<t_2$, there exist unique $n \in \mathbb{N}$ and $j \in\left\{1, \ldots, 2^n\right\}$ such that $t_1 \in I_j^n$ and $t_2 \in I_{j+1}^n$. Correspondingly, we may write
$$
\int_0^t e^{i(t-s) H} P_N h(s) d s=\sum_{n \in \mathbb{N}} \sum_{j=1}^{2^n} \chi_{I_{j+1}^n}(t) \int_{I_j^n} e^{i(t-s) H} P_N h(s) d s
$$
with the understanding that $I_{2^n+1}^n=\emptyset$ and where $\chi_{I_{j+1}^n}(t)$ denotes a sharp cut-off function to the interval $I_{j+1}^n$. To somewhat ease the notation in the following, we shall write $(p, q)=\left(\frac{4}{2-\varepsilon}, \frac{4}{\varepsilon}\right)$. Note that by Lemma \ref{L3.2} and by \eqref{eq3.13}, we have for any $n \in \mathbb{N}$ and $j \in\left\{1, \ldots, 2^n\right\}$ the bound
$$
\begin{aligned}
& N^{-\frac{1}{2}+\varepsilon}\left\|\int_{I_j^n} e^{i(t-s) H} P_N h(s) d s\right\|_{L_{\mathbf{e}_1}^{p, q}\left(I_{j+1}^n \times \mathbb{R}^4\right)} \lesssim   \sum_{\ell=1}^4N^{\frac{1}{2}+\varepsilon}\|P_{N}u\|_{L_{\mathbf{e}_{\ell}}^{\frac{4}{4-\varepsilon},\frac{4}{2+\varepsilon}}\left(I_j^n \times \mathbb{R}^4\right)}\leq 2^{-(\frac{1}{2}+\frac{\varepsilon}{4})n}.
\end{aligned}
$$
Hence, using also that $\frac{p}{q} \leq 1$, we compute that
\begin{eqnarray*}
&&N^{-\frac{1}{2}+\varepsilon}\left\|\int_0^t e^{i(t-s) H} P_N h(s) d s\right\|_{L_{\mathbf{e}_1}^{p, q}\left(I  \times \mathbb{R}^4\right)}\\
& \leq& N^{-\frac{1}{2}+\varepsilon} \sum_{n \in \mathbb{N}}\left\|\sum_{j=1}^{2^n} \chi_{I_{j+1}^n} \int_{I_j^n} e^{i(t-s) H} P_N h(s) d s\right\|_{L_{\mathbf{e}_1}^{p, q}\left(I  \times \mathbb{R}^4\right)} \\
& =&N^{-\frac{1}{2}+\varepsilon} \sum_{n \in \mathbb{N}}\left(\int_{\mathbb{R}_{x_1}}\left(\sum_{j=1}^{2^n} \int_{I_{j+1}^n \mathbb{R}_{x^{\prime}}^3}\left|\int_{I_j^n} e^{i(t-s) H} P_N h(s) d s\right|^q d x^{\prime} d t\right)^{\frac{p}{q}} d x_1\right)^{\frac{1}{p}} \\
& \leq& \sum_{n \in \mathbb{N}}\left(\sum_{j=1}^{2^n}\left(N^{-\frac{1}{2}+\varepsilon}\left\|\int_{I_j^n} e^{i(t-s) H} P_N h(s) d s\right\|_{L_{e_1}^{p, q}\left(I_{j+1}^n \times \mathbb{R}^4\right)}\right)^p\right)^{\frac{1}{p}} \\
& \lesssim& \sum_{n \in \mathbb{N}} 2^{\frac{n}{p}} 2^{-\left(\frac{1}{2}+\frac{\varepsilon}{4}\right) n} \\
& \simeq& \sum_{n \in \mathbb{N}} 2^{-\frac{\varepsilon}{2} n} \\
& \lesssim& 1,
\end{eqnarray*}
which implies that \eqref{eq3.13}  holds.
\end{proof}

Now, we give the following linear estimate.
\begin{lemma}\label{L3.4}
Let $I \subset \mathbb{R}$ be a time interval with $t_0 \in I$ and let $v_0 \in H_x^1(\mathbb{R}^4)$. Assume that $v: I \times \mathbb{R}^4 \rightarrow \mathbb{C}$ is a solution to
\begin{equation}\label{eq3.14}
  \left\{\begin{aligned}
\left(i \partial_t+\Delta-V\right) v & =h \text { on } I \times \mathbb{R}^4 \\
v\left(t_0\right) & =v_0.
\end{aligned}\right.
\end{equation}
Then we have for any dyadic integer $N \in 2^{\mathbb{Z}}$ that
\begin{equation}\label{eq3.15}
  N\left\|P_N v\right\|_{L_t^{\infty} L_x^2\left(I \times \mathbb{R}^4\right)}+\left\|P_N v\right\|_{X_N(I)} \lesssim N\left\|P_N v_0\right\|_{L_x^2\left(\mathbb{R}^4\right)}+\left\|P_N h\right\|_{G_N(I)}.
\end{equation}
Consequently, it holds that
\begin{equation}\label{eq3.16}
  \|v\|_{L_t^{\infty} H_x^1\left(I \times \mathbb{R}^4\right)}+\|v\|_{X(I)} \lesssim\left\|v_0\right\|_{H_x^1\left(\mathbb{R}^4\right)}+\|h\|_{G(I)}.
\end{equation}
\end{lemma}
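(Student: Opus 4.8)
The plan is to run Duhamel's formula and feed each piece into the linear estimates already at hand. Since $V$ is real and satisfies \eqref{eq1.2}--\eqref{eq1.3}, the operator $H=\Delta-V$ is self-adjoint, so $e^{itH}$ is unitary on $L_x^2(\mathbb{R}^4)$, and any solution of \eqref{eq3.14} obeys
\begin{equation*}
P_N v(t)=e^{i(t-t_0)H}P_N v_0-i\int_{t_0}^{t} e^{i(t-s)H}P_N h(s)\,ds,\qquad t\in I .
\end{equation*}
Splitting $I$ at $t_0$ into $I_{-}=I\cap(-\infty,t_0]$ and $I_{+}=I\cap[t_0,\infty)$, and using the time reversal symmetry of the equation together with the monotonicity and time-divisibility of the norms appearing in \eqref{eq3.15} under such a partition (cf. \eqref{eq3.99} and Proposition \ref{P3.2}), it suffices to prove \eqref{eq3.15} when $t_0$ is the left endpoint; after translating in time we may assume $t_0=\inf I=0$, so that the Duhamel term becomes $\int_0^t e^{i(t-s)H}P_N h(s)\,ds$, which is exactly the object controlled by Lemma \ref{L3.3}.

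For the homogeneous part $e^{itH}P_N v_0$: the $L_t^{\infty}L_x^2$ norm is bounded by $\|P_N v_0\|_{L_x^2}$ by $L^2$-unitarity; the three Strichartz components $N\|\cdot\|_{L_t^2L_x^4}$, $N\|\cdot\|_{L_t^3L_x^3}$, $N\|\cdot\|_{L_t^6L_x^{12/5}}$ of $\|\cdot\|_{X_N(I)}$ are bounded by $N\|P_N v_0\|_{L_x^2}$ via the Strichartz estimate \eqref{eq3.1}; and the lateral (maximal-function type) component of $\|\cdot\|_{X_N(I)}$ is bounded via the lateral estimate \eqref{eq3.4} of Lemma \ref{L3.1} --- the weights in the definition of $X_N$ being calibrated exactly so that all four contributions come out $\lesssim N\|P_N v_0\|_{L_x^2}$. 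Thus $N\|e^{i(\cdot)H}P_N v_0\|_{L_t^{\infty}L_x^2(I\times\mathbb{R}^4)}+\|e^{i(\cdot)H}P_N v_0\|_{X_N(I)}\lesssim N\|P_N v_0\|_{L_x^2}$.

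For the Duhamel term, fix any admissible decomposition $P_N h=h_N^{(1)}+h_N^{(2)}$ as in the definition of $\|P_N h\|_{G_N(I)}$. The contribution of the rough piece $h_N^{(1)}\in L_t^1L_x^2$ is handled by Minkowski's integral inequality in the $s$-variable (the sharp cutoff $\chi_{[0,t]}(s)$ does not enlarge any of the relevant norms) followed by the homogeneous bounds of the previous paragraph applied with $h_N^{(1)}(s)$ in place of $v_0$ and integrated in $s$; this produces $\lesssim N\|h_N^{(1)}\|_{L_t^1L_x^2(I\times\mathbb{R}^4)}$. The contribution of the local-smoothing-dual piece $h_N^{(2)}\in L_{\mathbf{e}_{\ell}}^{4/(4-\varepsilon),4/(2+\varepsilon)}$ is precisely what Lemma \ref{L3.3} delivers: estimate \eqref{eq3.11} controls the three Strichartz components and \eqref{eq3.12} controls the lateral component of $\|\cdot\|_{X_N(I)}$, each by $\sum_{\ell=1}^{4}N^{1/2+\varepsilon}\|h_N^{(2)}\|_{L_{\mathbf{e}_{\ell}}^{4/(4-\varepsilon),4/(2+\varepsilon)}(I\times\mathbb{R}^4)}$; for the remaining $L_t^{\infty}L_x^2$ norm one notes that, for each fixed $t$, unitarity gives $\|\int_0^t e^{i(t-s)H}P_N h_N^{(2)}(s)\,ds\|_{L_x^2}=\|\int_0^t e^{-isH}P_N h_N^{(2)}(s)\,ds\|_{L_x^2}$, which after the directional decomposition of $P_N$ used in the proof of Lemma \ref{L3.2} is bounded by the dual lateral estimate \eqref{eq3.7} on the interval $[0,t]$, uniformly in $t\in I$ by the monotonicity of the lateral norms in the time interval. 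Summing the three contributions and then taking the infimum over decompositions $P_N h=h_N^{(1)}+h_N^{(2)}$ yields \eqref{eq3.15}.

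Finally, \eqref{eq3.16} follows from \eqref{eq3.15} by squaring, summing over dyadic $N\in 2^{\mathbb{Z}}$, and invoking the $\ell^2$-structure of the $X(I)$ and $G(I)$ norms together with the $\ell^2$-orthogonality of the Littlewood-Paley pieces for the $H_x^1$-norm (the $\ell_N^2$-sum being pulled inside $L_t^{\infty}$ by Minkowski's inequality, and the low-frequency part of the Sobolev norm controlled directly from the Duhamel identity and $L^2$-conservation). I expect the only step requiring genuine work to be the lateral Duhamel estimate for $h_N^{(2)}$, and that work is already contained in Lemma \ref{L3.3} (the Whitney-type time-divisibility argument); apart from it, Lemma \ref{L3.4} is essentially an assembly of Proposition \ref{P3.1}, Lemma \ref{L3.1}, and Lemma \ref{L3.3}, the one new small ingredient being the uniform-in-$t$ $L^2$ bound for the retarded integral described above.
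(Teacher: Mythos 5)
Your proposal follows the paper's proof essentially verbatim: reduce to $t_0=\inf I=0$, write Duhamel's formula, estimate the homogeneous part with Proposition \ref{P3.1} and Lemma \ref{L3.1}, split $P_N h = h_N^{(1)}+h_N^{(2)}$ per the definition of $G_N$, treat the $L_t^1L_x^2$ piece by Minkowski together with the homogeneous bounds, and treat the lateral piece by Lemma \ref{L3.3}, then square-sum over $N$. The only cosmetic difference is that you argue the $L_t^\infty L_x^2$ bound for the retarded $h_N^{(2)}$ integral separately via unitarity and the dual lateral estimate \eqref{eq3.7} on $[0,t]$, whereas the paper absorbs it into \eqref{eq3.11} by taking the admissible pair $(q,r)=(\infty,2)$ there — both are fine and amount to the same computation.
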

\begin{proof}
Without loss of generality we may assume that $0=t_0=$ $\inf I$. By Duhamel's formula for the solution to the inhomogeneous Schr\"odinger equation \eqref{eq3.13}, we have for any dyadic integer $N \in 2^{\mathbb{Z}}$ that
\begin{equation}\label{eq3.17}
  P_N v(t)=e^{i t H} P_N v_0-i \int_0^t e^{i(t-s) H} P_N h(s) d s.
\end{equation}
By Proposition \ref{P3.1} and Lemma \ref{L3.1} it then holds that
\begin{eqnarray*}
&&N\left\|e^{i t H} P_N v_0\right\|_{L_t^{\infty} L_x^2\left(I \times \mathbb{R}^4\right)}+\left\|e^{i t H} P_N v_0\right\|_{X_N(I)}\\
&=&N\left\|e^{i t H} P_N v_0\right\|_{L_t^{\infty} L_x^2\left(I \times \mathbb{R}^4\right)}+N\left\|e^{i t H} P_N v_0\right\|_{L_t^2 L_x^4\left(I \times \mathbb{R}^4\right)}+N\left\|e^{i t H} P_N v_0\right\|_{L_t^3 L_x^3\left(I \times \mathbb{R}^4\right)}\\
&&+N\left\|e^{i t H} P_N v_0\right\|_{L_t^6 L_x^{\frac{12}{5}}\left(I \times \mathbb{R}^4\right)}\\
& \lesssim &N\left\|P_N v_0\right\|_{L_x^2\left(\mathbb{R}^4\right)}.
\end{eqnarray*}
In order to complete the proof of \eqref{eq3.15}, it remains to verify that the Duhamel term in \eqref{eq3.17} satisfies for any admissible Strichartz pair $(q, r)$ that
\begin{eqnarray}\label{eq3.18}
&& N\left\|\int_0^t e^{i(t-s) H} P_N h(s) d s\right\|_{L_t^q L_x^r\left(I \times \mathbb{R}^4\right)} +\sum_{\ell=1}^4 N^{-\frac{1}{2}+\varepsilon}\left\|\int_0^t e^{i(t-s) H} P_N h(s) d s\right\|_{L_{\mathbf{e}_{\ell}}^{\frac{4}{2-\varepsilon}, \frac{4}{\varepsilon}}\left(I \times \mathbb{R}^4\right)}\nonumber\\
  &\lesssim& N\left\|P_N h\right\|_{L_t^1 L_x^2\left(I \times \mathbb{R}^4\right)}
\end{eqnarray}
as well as
\begin{eqnarray}\label{eq3.19}
&& N\left\|\int_0^t e^{i(t-s) H} P_N h(s) d s\right\|_{L_t^q L_x^r\left(I \times \mathbb{R}^4\right)} +\sum_{\ell=1}^4 N^{-\frac{1}{2}+\varepsilon}\left\|\int_0^t e^{i(t-s) H} P_N h(s) d s\right\|_{L_{\mathbf{e}_{\ell}}^{\frac{4}{2-\varepsilon}, \frac{4}{\varepsilon}}\left(I \times \mathbb{R}^4\right)}\nonumber\\
  &\lesssim& N^{\frac{1}{2}+\varepsilon}\left\|P_N h\right\|_{L_t^1 L_x^2\left(I \times \mathbb{R}^4\right)}.
\end{eqnarray}
The proof of \eqref{eq3.18} is standard and therefore omitted, while the estimate \eqref{eq3.19} is provided by Lemma \ref{L3.3}.
\end{proof}

\section{The estimates of nonlinearity}
\begin{proposition}\label{P4.1}
Let $N_1 \gtrsim N$ and $N_1 \geq N_2 \geq N_3$ be dyadic integers. Let $\mathbf{e} \in\left\{\mathbf{e}_1, \ldots, \mathbf{e}_4\right\}$ and let $I \subset \mathbb{R}$ be a time interval. Then the following trilinear estimates hold where all space-time norms are taken over $I \times \mathbb{R}^4$.
\begin{eqnarray}\label{eq4.1}
  &&N\left\|P_N\left(P_{N_1} v_1 P_{N_2} v_2 P_{N_3} v_3\right)\right\|_{L_t^1 W_x^{1,2}}\\
  &\lesssim&N\left[\frac{1}{N_1}\left(\frac{N_3}{N_2}\right)^{\frac{2}{3}}+\left(\frac{N_3}{N_2}\right)^{\frac{1}{3}}+\left(\frac{N_3}{N_1}\right)^{\frac{1}{3}}+\left(\frac{N_2}{N_1}\right)^{\frac{1}{3}} \right]\left\|P_{N_1} v_1\right\|_{X_{N_1}}\left\|P_{N_2} v_2\right\|_{X_{N_2}}\left\|P_{N_3} v_3\right\|_{X_{N_3}}, \nonumber
\end{eqnarray}
\begin{eqnarray}\label{eq4.2}
  &&N\left\|P_N\left(P_{N_1} v_1 P_{N_2} F_2 P_{N_3} v_3\right)\right\|_{L_t^1 W_x^{1,2}}\\
  &\lesssim&N\left[\frac{1}{N_1}\left(\frac{N_3}{N_2}\right)^{\frac{1}{3}}+\left(\frac{N_3}{N_2}\right)^{\frac{1}{3}}+\left(\frac{N_2}{N_1}\right)^{\frac{2}{3}}+\left(\frac{N_2}{N_1}\right)^{\frac{1}{3}} \right]\left\|P_{N_1} v_1\right\|_{X_{N_1}}\left\|P_{N_2} F_2\right\|_{Y_{N_2}}\left\|P_{N_3} v_3\right\|_{X_{N_3}}, \nonumber
\end{eqnarray}
\begin{eqnarray}\label{eq4.3}
  &&N\left\|P_N\left(P_{N_1} v_1 P_{N_2} v_2 P_{N_3} F_3\right)\right\|_{L_t^1 W_x^{1,2}}\\
  &\lesssim&N\left[\frac{1}{N_1}\left(\frac{N_3}{N_2}\right)^{\frac{2}{3}}+\left(\frac{N_3}{N_2}\right)^{\frac{2}{3}}+\left(\frac{N_2}{N_1}\right)^{\frac{2}{3}}+ \frac{N_3}{N_2}  \right]\left\|P_{N_1} v_1\right\|_{X_{N_1}}\left\|P_{N_2} v_2\right\|_{X_{N_2}}\left\|P_{N_3} F_3\right\|_{Y_{N_3}}, \nonumber
\end{eqnarray}
\begin{eqnarray}\label{eq4.4}
  &&N\left\|P_N\left(P_{N_1} v_1 P_{N_2} F_2 P_{N_3} F_3\right)\right\|_{L_t^1 W_x^{1,2}}\\
  &\lesssim&N\left[\frac{1}{N_1}\left(\frac{N_3}{N_2}\right)^{\frac{1}{3}}+\left(\frac{N_3}{N_2}\right)^{\frac{1}{3}}+\left(\frac{N_2}{N_1}\right)^{\frac{2}{3}}+ \frac{N_3}{N_2}  \right]\left\|P_{N_1} v_1\right\|_{X_{N_1}}\left\|P_{N_2} F_2\right\|_{Y_{N_2}}\left\|P_{N_3} F_3\right\|_{Y_{N_3}}, \nonumber
\end{eqnarray}
\begin{eqnarray}\label{eq4.5}
&&N^{\frac{1}{2}+\varepsilon}\left\|P_N\left(P_{N_1} F_1 P_{N_2} F_2 P_{N_3} F_3\right)\right\|_{W_{\mathbf{e}}^{\frac{4}{4-\varepsilon},\frac{4}{2+\varepsilon}}}\nonumber\\
&\lesssim&\left(\frac{N}{N_1}\right)^{\frac{1}{2}+\varepsilon}\left(\frac{N_3}{N_1}\right)^{\frac{1}{6}}\left\|P_{N_1} F_1\right\|_{Y_{N_1}}\left\|P_{N_2} F_2\right\|_{Y_{N_2}}\left\|P_{N_3} F_3\right\|_{Y_{N_3}},
\end{eqnarray}
\begin{eqnarray}\label{eq4.6}
&&N^{\frac{1}{2}+\varepsilon}\left\|P_N\left(P_{N_1} F_1 P_{N_2} v_2 P_{N_3} v_3\right)\right\|_{W_{\mathbf{e}}^{\frac{4}{4-\varepsilon},\frac{4}{2+\varepsilon}}}\nonumber\\
&\lesssim&\left(\frac{N}{N_2}\right)^{\frac{1}{2}+\varepsilon}\left(\frac{N_3}{N_2}\right)^{\frac{1}{2}-\varepsilon}\left\|P_{N_1} F_1\right\|_{Y_{N_1}}\left\|P_{N_2} v_2\right\|_{X_{N_2}}\left\|P_{N_3} v_3\right\|_{X_{N_3}},
\end{eqnarray}
\begin{eqnarray}\label{eq4.7}
&&N^{\frac{1}{2}+\varepsilon}\left\|P_N\left(P_{N_1} F_1 P_{N_2} F_2 P_{N_3} v_3\right)\right\|_{W_{\mathbf{e}}^{\frac{4}{4-\varepsilon},\frac{4}{2+\varepsilon}}}\nonumber\\
&\lesssim&\left(\frac{N}{N_3}\right)^{\frac{1}{2}+\varepsilon}\left(\frac{N_2}{N_1}\right)^{\frac{1}{6}} \cdot\left\|P_{N_1} F_1\right\|_{Y_{N_1}}\left\|P_{N_2} F_2\right\|_{Y_{N_2}}\left\|P_{N_3} v_3\right\|_{X_{N_3}},
\end{eqnarray}
\begin{eqnarray}\label{eq4.8}
&&N^{\frac{1}{2}+\varepsilon}\left\|P_N\left(P_{N_1} F_1 P_{N_2} v_2 P_{N_3} F_3\right)\right\|_{W_{\mathbf{e}}^{\frac{4}{4-\varepsilon},\frac{4}{2+\varepsilon}}}\nonumber\\
&\lesssim&\left(\frac{N}{N_2}\right)^{\frac{1}{2}+\varepsilon}\left(\frac{N_3}{N_2}\right)^{\frac{1}{6} }\left\|P_{N_1} F_1\right\|_{Y_{N_1}}\left\|P_{N_2} v_2\right\|_{X_{N_2}}\left\|P_{N_3} F_3\right\|_{Y_{N_3}}.
\end{eqnarray}
\end{proposition}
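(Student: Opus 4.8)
The plan is to establish all eight inequalities by one uniform scheme. In each case one writes the frequency-localized product $P_N\bigl(P_{N_1}w_1\,P_{N_2}w_2\,P_{N_3}w_3\bigr)$, where each $w_i$ is either the deterministic profile $v_i$ (to be measured in $X_{N_i}$) or the rough free evolution $F_i$ (to be measured in $Y_{N_i}$); one reduces the left-hand norm to an $L^2_x$- or lateral-$L^2$-scaling quantity, distributes the three factors over a H\"older triple in the space-time variables --- or, for \eqref{eq4.5}--\eqref{eq4.8}, in the lateral variables $(x_{\mathbf e};t,x')$ --- and finally estimates each single factor by \emph{one} component of its defining norm: for a $v$-factor one of the Strichartz pieces $L^2_tL^4_x$, $L^3_tL^3_x$, $L^6_tL^{12/5}_x$ or the maximal-function piece $W_{\mathbf e_\ell}^{4/(2-\varepsilon),4/\varepsilon}$ of $X_{N_i}$, and for an $F$-factor one of $L^3_tL^6_x$, $L^6_tL^6_x$, the maximal-function piece, or the local-smoothing piece $L_{\mathbf e_\ell}^{4/\varepsilon,4/(2-\varepsilon)}$ carried by $P_{N,\mathbf e_\ell}P_N F$ inside $Y_{N_i}$. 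I would use the Bernstein inequalities of Lemma~\ref{L2.1} repeatedly to reconcile the spatial integrability of a factor with the exponent demanded by the H\"older triple, each such step contributing an explicit power of the corresponding $N_i$; Lemma~\ref{L2.3} lets one pass between the $H$-adapted and the flat Sobolev norms, and the directional identity \eqref{eq2.1} is what makes the local-smoothing bound \eqref{eq3.5} of Lemma~\ref{L3.1} applicable. Two elementary reductions recur throughout: first, $P_N$ is retained only to record the support constraint $N\lesssim N_1$, and in the high-high-to-low regime $N\ll N_1$ one uses that $P_N$ of the product vanishes unless $N_1\sim N_2$, which allows trading a power of $N_2$ for one of $N_1$; second, since the product is frequency-localized at scale $\sim N$ one has $\|P_N(\cdots)\|_{W^{1,2}_x}\sim\langle N\rangle\|P_N(\cdots)\|_{L^2_x}$, so that \eqref{eq4.1}--\eqref{eq4.4} reduce to $L^1_tL^2_x$ estimates.

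For \eqref{eq4.1}--\eqref{eq4.4} the top-frequency slot always carries a $v$, so the derivative coming from $W^{1,2}_x$ may be placed on that factor. I would take the H\"older triple in time solving $\tfrac1{q_1}+\tfrac1{q_2}+\tfrac1{q_3}=1$ from the exponents $2,3,6$ available in $X_{N_i}$ (and $3,6$ in $Y_{N_i}$), permuted according to which slot holds which type of factor; in space one must reach $\tfrac1{r_1}+\tfrac1{r_2}+\tfrac1{r_3}=\tfrac12$, which after Bernstein sends the two lower-frequency factors into $L^\infty_x$ or --- when that is cheaper --- into their lateral maximal-function components, and this is exactly the origin of the bracket terms $\tfrac1{N_1}(N_3/N_2)^{2/3}$, $(N_3/N_2)^{1/3}$, $(N_2/N_1)^{1/3}$, $N_3/N_2$, and the like. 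Whenever a slot holds an $F$ one places that factor in $L^3_tL^6_x$ or $L^6_tL^6_x$; the weight $\langle N\rangle^{1/3+3\varepsilon}$ built into the $L^3_tL^6_x$ component of $Y_N$ supplies precisely the derivatives needed to offset the low regularity of $F$, and the standing hypothesis $\tfrac13+3\varepsilon\le s$ is what makes the resulting exponents close. Summing over the handful of natural assignments --- one tailored to each of the regimes $N\sim N_1\sim N_2$, $N\sim N_1\gg N_2$, $N\ll N_1\sim N_2$ --- reproduces the four bracketed sums.

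For \eqref{eq4.5}--\eqref{eq4.8} the top-frequency slot always holds an $F$, so the derivative cannot be placed there; this is precisely why these four estimates carry the lateral output norm $W_{\mathbf e}^{4/(4-\varepsilon),4/(2+\varepsilon)}$. I would split its exponents as $\tfrac{4-\varepsilon}{4}=\tfrac\varepsilon4+(1-\tfrac\varepsilon2)$ in the $x_{\mathbf e}$-direction and $\tfrac{2+\varepsilon}{4}=(\tfrac12-\tfrac\varepsilon4)+\tfrac\varepsilon2$ in the $(t,x')$-directions, place $P_{N_1}F_1$ --- after the directional decomposition \eqref{eq2.1} --- in the local-smoothing component $L_{\mathbf e}^{4/\varepsilon,4/(2-\varepsilon)}$ of $Y$, which through its weight $\langle N_1\rangle^{1/3+3\varepsilon}N_1^{1/2-\varepsilon}$ returns roughly half a derivative, and place the remaining two factors in maximal-function components (weighted $N_i^{-1/6}$ in $Y$ and $N_i^{-1/2+\varepsilon}$ in $X$). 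A Bernstein step in the transverse variables $x'$ matches the inner integrability, and the weights then combine with the output weight $N^{1/2+\varepsilon}$ to leave exactly the stated prefactors $(N/N_1)^{1/2+\varepsilon}(N_3/N_1)^{1/6}$, and so on; the residual fractional power $(\cdot)^{1/6}$ is the derivative that, beyond the $1/2$ recovered from local smoothing, must still be borrowed from the lowest-frequency factor, whereas in \eqref{eq4.6}, where two of the three factors are full-regularity $v$'s, one can afford instead the larger gain $(N_3/N_2)^{1/2-\varepsilon}$.

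The main obstacle is bookkeeping rather than any isolated analytic ingredient: for each of the eight inequalities, for every frequency sub-regime, and for every admissible assignment of the three factors to H\"older slots and component norms, one must check that the powers of $N_1,N_2,N_3$ produced by the Bernstein steps and by the weights built into the $X_{N_i}$- and $Y_{N_i}$-norms add up exactly to one of the terms displayed in the bracket, and that no assignment produces a term uncontrolled by that bracket. The two most delicate points are the resonant regime $N\ll N_1$, where the entire gain must be extracted from $N_1\sim N_2$ together with a maximal-function component, and the configurations \eqref{eq4.5}, \eqref{eq4.7}, \eqref{eq4.8}, in which a rough $F$ occupies the top frequency alongside a second $F$ and the only route to enough derivatives is through the local-smoothing component of $Y$ --- so that several of the interaction terms genuinely demand the lateral output norm and cannot be subsumed into the $L^1_tW^{1,2}_x$ estimates.
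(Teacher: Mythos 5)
Your outline correctly identifies the overall architecture for \eqref{eq4.1}--\eqref{eq4.4} (distribute the derivative, H\"older through the Strichartz components of $X$ and $Y$, Bernstein to reconcile exponents) and the key role of the directional decomposition \eqref{eq2.1} together with Lemma~\ref{L3.1} in \eqref{eq4.5}--\eqref{eq4.8}, so the macro-strategy matches the paper's.

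The genuine gap is in \eqref{eq4.6}--\eqref{eq4.8}. You prescribe a \emph{single} assignment there --- $P_{N_1}F_1$ into the local-smoothing piece $L_{\mathbf e}^{4/\varepsilon,4/(2-\varepsilon)}$ of $Y_{N_1}$ and the remaining two factors into their lateral maximal-function pieces --- and assert that the weights ``combine \ldots to leave exactly the stated prefactors.'' That is not so. Take \eqref{eq4.7} and track the weights of this assignment: the local-smoothing piece of $Y_{N_1}$ returns $\langle N_1\rangle^{-1/3-3\varepsilon}N_1^{-1/2+\varepsilon}$, the maximal-function piece of $Y_{N_2}$ costs $N_2^{1/6}$, and that of $X_{N_3}$ costs $N_3^{1/2-\varepsilon}$, giving overall
$N^{1/2+\varepsilon}\langle N_1\rangle^{-1/3-3\varepsilon}N_1^{-1/2+\varepsilon}N_2^{1/6}N_3^{1/2-\varepsilon}$.
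Dividing by the stated $(N/N_3)^{1/2+\varepsilon}(N_2/N_1)^{1/6}$ leaves (for large $N_1$) a factor $N_1^{-2/3-2\varepsilon}N_3$, which is $\gg 1$ whenever $N_3\gg N_1^{2/3}$, in particular when $N_1\sim N_2\sim N_3$. The same bookkeeping makes your assignment overshoot \eqref{eq4.6} by $N_1^{2/3-3\varepsilon}$ and \eqref{eq4.8} by $N_1^{1/3-2\varepsilon}$ when $N_2\sim N_1$.

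What the paper actually does in Cases 6--8 is to \emph{interpolate} the $L_{\mathbf e}^{2,2}$-norm of the pair $P_{N_1}F_1\cdot P_{N_j}w_j$ between the (local-smoothing)$\times$(lateral maximal) product and the pure Strichartz product $L_t^6L_x^6\times L_t^3L_x^3$, with a case-specific exponent ($\tfrac{2(1-3\varepsilon)}{3(1-2\varepsilon)}$ in \eqref{eq4.6}, $\tfrac{1-3\varepsilon}{2-3\varepsilon}$ in \eqref{eq4.7}, $\tfrac{1+3\varepsilon}{1+6\varepsilon}$ in \eqref{eq4.8}). Each factor is thus placed not in \emph{one} component of its norm but in a geometric mean of two components, and that extra degree of freedom is precisely what produces the stated gains $(N_3/N_2)^{1/2-\varepsilon}$, $(N_2/N_1)^{1/6}$, $(N_3/N_2)^{1/6}$. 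You notice the larger gain in \eqref{eq4.6} but attribute it to ``being able to afford it''; the actual mechanism is this interpolation (indeed the pure Strichartz assignment already gives \eqref{eq4.6}, while the local-smoothing assignment you describe does not, and in \eqref{eq4.7} \emph{neither} single-component assignment closes for every admissible $(N,N_1,N_2,N_3)$). Without interpolation --- or, equivalently, a pointwise minimum over the two competing single-component assignments --- the step from the H\"older split to the stated trilinear bounds does not go through.
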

\begin{proof}
In the following all space-time norms are taken over $I \times \mathbb{R}^4$. We begin with the derivation of the estimates \eqref{eq4.1}-\eqref{eq4.4} where a deterministic solution $v$ appears at highest frequency. The proofs are simple applications of H\"older inequality and Bernstein estimates.
\textbf{Case 1:} $\mathbf{v_1v_2v_3}$
\begin{eqnarray*}
&& N\left\|P_N\left(P_{N_1} v_1 P_{N_2} v_2 P_{N_3} v_3\right)\right\|_{L_t^1W_x^{1,2}} \\
&\lesssim& N\left\|P_N\left(P_{N_1} v_1 P_{N_2} v_2 P_{N_3} v_3\right)\right\|_{L_t^1L_x^{2}}+N\left\|\nabla(P_N\left(P_{N_1} v_1 P_{N_2} v_2 P_{N_3} v_3\right))\right\|_{L_t^1L_x^{2}}\\
& \lesssim& N\left[\left\|\nabla P_{N_1} v_1\right\|_{L_t^2 L_x^4}\left\|P_{N_2} v_2\right\|_{L_t^3 L_x^6}\left\|P_{N_3} v_3\right\|_{L_t^6 L_x^{12}}+\left\|P_{N_1} v_1\right\|_{L_t^2 L_x^4}\left\|P_{N_2} v_2\right\|_{L_t^3 L_x^4}\left\|P_{N_3} v_3\right\|_{L_t^6 L_x^{\infty}}\right. \\
&&\left.+\left\| P_{N_1} v_1\right\|_{L_t^2 L_x^{12}}\left\|\nabla P_{N_2} v_2\right\|_{L_t^3 L_x^3}\left\|P_{N_3} v_3\right\|_{L_t^6 L_x^{12}}+\left\|  P_{N_1} v_1\right\|_{L_t^2 L_x^{12}}\left\|P_{N_2} v_2\right\|_{L_t^3 L_x^{\infty}}\left\|\nabla P_{N_3} v_3\right\|_{L_t^6 L_x^{\frac{12}{5}}}\right]\\
& \lesssim& N\left[N_1\left\|  P_{N_1} v_1\right\|_{L_t^2 L_x^4}\left\|P_{N_2} v_2\right\|_{L_t^3 L_x^6}\left\|P_{N_3} v_3\right\|_{L_t^6 L_x^{12}}+\left\|P_{N_1} v_1\right\|_{L_t^2 L_x^4}\left\|P_{N_2} v_2\right\|_{L_t^3 L_x^4}\left\|P_{N_3} v_3\right\|_{L_t^6 L_x^{\infty}}\right. \\
&&\left.+N_2\left\| P_{N_1} v_1\right\|_{L_t^2 L_x^{12}}\left\| P_{N_2} v_2\right\|_{L_t^3 L_x^3}\left\|P_{N_3} v_3\right\|_{L_t^6 L_x^{12}}+N_3\left\|  P_{N_1} v_1\right\|_{L_t^2 L_x^{12}}\left\|P_{N_2} v_2\right\|_{L_t^3 L_x^{\infty}}\left\|  P_{N_3} v_3\right\|_{L_t^6 L_x^{\frac{12}{5}}}\right]\\
& \lesssim& N\left\|P_{N_1} v_1\right\|_{L_t^2 L_x^4} N_2^{\frac{1}{3}}\left\|P_{N_2} v_2\right\|_{L_t^3 L_x^3} N_3^{\frac{5}{3}}\left\|P_{N_3} v_3\right\|_{L_t^6L_x^{\frac{12}{5}}}\\
&&+ N\cdot N_1\left\|P_{N_1} v_1\right\|_{L_t^2 L_x^4} N_2^{\frac{2}{3}}\left\|P_{N_2} v_2\right\|_{L_t^3 L_x^3} N_3^{\frac{4}{3}}\left\|P_{N_3} v_3\right\|_{L_t^6L_x^{\frac{12}{5}}} \\
&&+ N\cdot N_1^{\frac{2}{3}}\left\|P_{N_1} v_1\right\|_{L_t^2 L_x^4} N_2 \left\|P_{N_2} v_2\right\|_{L_t^3 L_x^3} N_3^{\frac{4}{3}}\left\|P_{N_3} v_3\right\|_{L_t^6L_x^{\frac{12}{5}}} \\
&&+ N\cdot N_1^{\frac{2}{3}}\left\|P_{N_1} v_1\right\|_{L_t^2 L_x^4} N_2^{\frac{4}{3}}\left\|P_{N_2} v_2\right\|_{L_t^3 L_x^3} N_3 \left\|P_{N_3} v_3\right\|_{L_t^6L_x^{\frac{12}{5}}} \\
& \simeq&\left(\frac{N}{N_1}\right)\left(\frac{N_3}{N_2}\right)^{\frac{2}{3}} N_1\left\|P_{N_1} v_1\right\|_{L_t^2 L_x^4} N_2\left\|P_{N_2} v_2\right\|_{L_t^3 L_x^3} N_3\left\|P_{N_3} v_3\right\|_{L_t^6 L_x^{\frac{12}{5}}}\\
&&+N\left(\frac{N_3}{N_2}\right)^{\frac{1}{3}} N_1\left\|P_{N_1} v_1\right\|_{L_t^2 L_x^4} N_2\left\|P_{N_2} v_2\right\|_{L_t^3 L_x^3} N_3\left\|P_{N_3} v_3\right\|_{L_t^6 L_x^{\frac{12}{5}}}\\
&&+N\left(\frac{N_3}{N_1}\right)^{\frac{1}{3}} N_1\left\|P_{N_1} v_1\right\|_{L_t^2 L_x^4} N_2\left\|P_{N_2} v_2\right\|_{L_t^3 L_x^3} N_3\left\|P_{N_3} v_3\right\|_{L_t^6 L_x^{\frac{12}{5}}}\\
&&+N\left(\frac{N_2}{N_1}\right)^{\frac{1}{3}} N_1\left\|P_{N_1} v_1\right\|_{L_t^2 L_x^4} N_2\left\|P_{N_2} v_2\right\|_{L_t^3 L_x^3} N_3\left\|P_{N_3} v_3\right\|_{L_t^6 L_x^{\frac{12}{5}}}\\
&\lesssim&N\left[\frac{1}{N_1}\left(\frac{N_3}{N_2}\right)^{\frac{2}{3}}+\left(\frac{N_3}{N_2}\right)^{\frac{1}{3}}+\left(\frac{N_3}{N_1}\right)^{\frac{1}{3}}+\left(\frac{N_2}{N_1}\right)^{\frac{1}{3}} \right]\left\|P_{N_1} v_1\right\|_{X_{N_1}}\left\|P_{N_2} v_2\right\|_{X_{N_2}}\left\|P_{N_3} v_3\right\|_{X_{N_3}}.
\end{eqnarray*}
\textbf{Case 2:} $\mathbf{v_1F_2v_3}$
\begin{eqnarray*}
&& N\left\|P_N\left(P_{N_1} v_1 P_{N_2} F_2 P_{N_3} v_3\right)\right\|_{L_t^1W_x^{1,2}} \\
&\lesssim& N\left\|P_N\left(P_{N_1} v_1 P_{N_2} F_2 P_{N_3} v_3\right)\right\|_{L_t^1L_x^{2}}+N\left\|\nabla(P_N\left(P_{N_1} v_1 P_{N_2} F_2 P_{N_3} v_3\right))\right\|_{L_t^1L_x^{2}}\\
& \lesssim& N\left[\left\|\nabla P_{N_1} v_1\right\|_{L_t^2 L_x^4}\left\|P_{N_2} F_2\right\|_{L_t^3 L_x^6}\left\|P_{N_3} v_3\right\|_{L_t^6 L_x^{12}}+\left\|P_{N_1} v_1\right\|_{L_t^2 L_x^4}\left\|P_{N_2} F_2\right\|_{L_t^3 L_x^6}\left\|P_{N_3} v_3\right\|_{L_t^6 L_x^{12}}\right. \\
&&\left.+\left\| P_{N_1} v_1\right\|_{L_t^2 L_x^{6}}\left\|\nabla P_{N_2} F_2\right\|_{L_t^3 L_x^6}\left\|P_{N_3} v_3\right\|_{L_t^6 L_x^{6}}+\left\|  P_{N_1} v_1\right\|_{L_t^2 L_x^{12}}\left\|P_{N_2} F_2\right\|_{L_t^3 L_x^{\infty}}\left\|\nabla P_{N_3} v_3\right\|_{L_t^6 L_x^{\frac{12}{5}}}\right]\\
& \lesssim& N\left[N_1\left\|  P_{N_1} v_1\right\|_{L_t^2 L_x^4}\left\|P_{N_2} F_2\right\|_{L_t^3 L_x^6}\left\|P_{N_3} v_3\right\|_{L_t^6 L_x^{12}}+\left\|P_{N_1} v_1\right\|_{L_t^2 L_x^4}\left\|P_{N_2} F_2\right\|_{L_t^3 L_x^6}\left\|P_{N_3} v_3\right\|_{L_t^6 L_x^{12}}\right. \\
&&\left.+N_2\left\| P_{N_1} v_1\right\|_{L_t^2 L_x^{6}}\left\| P_{N_2} F_2\right\|_{L_t^3 L_x^6}\left\|P_{N_3} v_3\right\|_{L_t^6 L_x^{6}}+N_3\left\|  P_{N_1} v_1\right\|_{L_t^2 L_x^{12}}\left\|P_{N_2} F_2\right\|_{L_t^3 L_x^{\infty}}\left\|  P_{N_3} v_3\right\|_{L_t^6 L_x^{\frac{12}{5}}}\right]\\
& \lesssim& N\left\|P_{N_1} v_1\right\|_{L_t^2 L_x^4} \left\|P_{N_2} F_2\right\|_{L_t^3 L_x^6} N_3^{\frac{4}{3}}\left\|P_{N_3} v_3\right\|_{L_t^6L_x^{\frac{12}{5}}}\\
&&+ N\cdot N_1\left\|P_{N_1} v_1\right\|_{L_t^2 L_x^4} \left\|P_{N_2} F_2\right\|_{L_t^3 L_x^6} N_3^{\frac{4}{3}}\left\|P_{N_3} v_3\right\|_{L_t^6L_x^{\frac{12}{5}}} \\
&&+ N\cdot N_1^{\frac{1}{3}}\left\|P_{N_1} v_1\right\|_{L_t^2 L_x^4} N_2 \left\|P_{N_2} F_2\right\|_{L_t^3 L_x^6} N_3 \left\|P_{N_3} v_3\right\|_{L_t^6L_x^{\frac{12}{5}}} \\
&&+ N\cdot N_1^{\frac{2}{3}}\left\|P_{N_1} v_1\right\|_{L_t^2 L_x^4} N_2^{\frac{2}{3}}\left\|P_{N_2} F_2\right\|_{L_t^3 L_x^6} N_3 \left\|P_{N_3} v_3\right\|_{L_t^6L_x^{\frac{12}{5}}} \\
& \simeq&\left(\frac{N}{N_1}\right)\left(\frac{N_3}{N_2}\right)^{\frac{1}{3}} N_1\left\|P_{N_1} v_1\right\|_{L_t^2 L_x^4} N_2^{\frac{1}{3}}\left\|P_{N_2} F_2\right\|_{L_t^3 L_x^6} N_3\left\|P_{N_3} v_3\right\|_{L_t^6 L_x^{\frac{12}{5}}}\\
&&+N\left(\frac{N_3}{N_2}\right)^{\frac{1}{3}} N_1\left\|P_{N_1} v_1\right\|_{L_t^2 L_x^4} N_2^{\frac{1}{3}}\left\|P_{N_2} F_2\right\|_{L_t^3 L_x^6} N_3\left\|P_{N_3} v_3\right\|_{L_t^6 L_x^{\frac{12}{5}}}\\
&&+N\left(\frac{N_2}{N_1}\right)^{\frac{2}{3}} N_1\left\|P_{N_1} v_1\right\|_{L_t^2 L_x^4} N_2^{\frac{1}{3}}\left\|P_{N_2} F_2\right\|_{L_t^3 L_x^6} N_3\left\|P_{N_3} v_3\right\|_{L_t^6 L_x^{\frac{12}{5}}}\\
&&+N\left(\frac{N_2}{N_1}\right)^{\frac{1}{3}} N_1\left\|P_{N_1} v_1\right\|_{L_t^2 L_x^4} N_2^{\frac{1}{3}}\left\|P_{N_2} F_2\right\|_{L_t^3 L_x^6} N_3\left\|P_{N_3} v_3\right\|_{L_t^6 L_x^{\frac{12}{5}}}\\
&\lesssim&N\left[\frac{1}{N_1}\left(\frac{N_3}{N_2}\right)^{\frac{1}{3}}+\left(\frac{N_3}{N_2}\right)^{\frac{1}{3}}+\left(\frac{N_2}{N_1}\right)^{\frac{2}{3}}+\left(\frac{N_2}{N_1}\right)^{\frac{1}{3}} \right]\left\|P_{N_1} v_1\right\|_{X_{N_1}}\left\|P_{N_2} F_2\right\|_{Y_{N_2}}\left\|P_{N_3} v_3\right\|_{X_{N_3}}.
\end{eqnarray*}
\textbf{Case 3:} $\mathbf{v_1v_2F_3}$
\begin{eqnarray*}
&& N\left\|P_N\left(P_{N_1} v_1 P_{N_2} v_2 P_{N_3} F_3\right)\right\|_{L_t^1W_x^{1,2}} \\
&\lesssim& N\left\|P_N\left(P_{N_1} v_1 P_{N_2} v_2 P_{N_3} F_3\right)\right\|_{L_t^1L_x^{2}}+N\left\|\nabla(P_N\left(P_{N_1} v_1 P_{N_2} v_2 P_{N_3} F_3\right))\right\|_{L_t^1L_x^{2}}\\
& \lesssim& N\left[\left\|\nabla P_{N_1} v_1\right\|_{L_t^2 L_x^4}\left\|P_{N_2} v_2\right\|_{L_t^3 L_x^4}\left\|P_{N_3} F_3\right\|_{L_t^6 L_x^{\infty}}+\left\|P_{N_1} v_1\right\|_{L_t^2 L_x^{4}}\left\|P_{N_2} v_2\right\|_{L_t^3 L_x^4}\left\|P_{N_3} F_3\right\|_{L_t^6 L_x^{\infty}}\right. \\
&&\left.+\left\| P_{N_1} v_1\right\|_{L_t^2 L_x^{6}}\left\|\nabla P_{N_2} v_2\right\|_{L_t^3 L_x^6}\left\|P_{N_3} F_3\right\|_{L_t^6 L_x^{6}}+\left\|  P_{N_1} v_1\right\|_{L_t^2 L_x^{6}}\left\|P_{N_2} v_2\right\|_{L_t^3 L_x^{6}}\left\|\nabla P_{N_3} F_3\right\|_{L_t^6 L_x^{6}}\right]\\
& \lesssim& N\left[N_1\left\|  P_{N_1} v_1\right\|_{L_t^2 L_x^4}\left\|P_{N_2} v_2\right\|_{L_t^3 L_x^4}\left\|P_{N_3} F_3\right\|_{L_t^6 L_x^{\infty}}+\left\|P_{N_1} v_1\right\|_{L_t^2 L_x^4}\left\|P_{N_2} v_2\right\|_{L_t^3 L_x^4}\left\|P_{N_3} F_3\right\|_{L_t^6 L_x^{\infty}}\right. \\
&&\left.+N_2\left\| P_{N_1} v_1\right\|_{L_t^2 L_x^{6}}\left\| P_{N_2} v_2\right\|_{L_t^3 L_x^6}\left\|P_{N_3} F_3\right\|_{L_t^6 L_x^{6}}+N_3\left\|  P_{N_1} v_1\right\|_{L_t^2 L_x^{6}}\left\|P_{N_2} v_2\right\|_{L_t^3 L_x^{6}}\left\|  P_{N_3} F_3\right\|_{L_t^6 L_x^{6}}\right]\\
& \lesssim& N\left\|P_{N_1} v_1\right\|_{L_t^2 L_x^4}N_2^{\frac{1}{3}}\left\|P_{N_2} v_2\right\|_{L_t^3 L_x^3}  \cdot N_3^{\frac{2}{3}}\left\|P_{N_3} F_3\right\|_{L_t^6L_x^{6}}\\
&&+ N\cdot N_1\left\|P_{N_1} v_1\right\|_{L_t^2 L_x^4}N_2^{\frac{1}{3}}\left\|P_{N_2} v_2\right\|_{L_t^3 L_x^3}  \cdot N_3^{\frac{2}{3}}\left\|P_{N_3} F_3\right\|_{L_t^6L_x^{6}} \\
&&+ N\cdot N_1^{\frac{1}{3}}\left\|P_{N_1} v_1\right\|_{L_t^2 L_x^4} N_2^{\frac{5}{3}} \left\|P_{N_2} v_2\right\|_{L_t^3 L_x^3} \left\|P_{N_3} F_3\right\|_{L_t^6L_x^{6}} \\
&&+ N\cdot N_1^{\frac{1}{3}}\left\|P_{N_1} v_1\right\|_{L_t^2 L_x^4} N_2^{\frac{2}{3}}\left\|P_{N_2} v_2\right\|_{L_t^3 L_x^3} N_3 \left\|P_{N_3} F_3\right\|_{L_t^6L_x^{6}} \\
& \simeq&\left(\frac{N}{N_1}\right)\left(\frac{N_3}{N_2}\right)^{\frac{2}{3}} N_1\left\|P_{N_1} v_1\right\|_{L_t^2 L_x^4} N_2\left\|P_{N_2} v_2\right\|_{L_t^3 L_x^3} \left\|P_{N_3} F_3\right\|_{L_t^6 L_x^{6}}\\
&&+N\left(\frac{N_3}{N_2}\right)^{\frac{2}{3}} N_1\left\|P_{N_1} v_1\right\|_{L_t^2 L_x^4} N_2\left\|P_{N_2} v_2\right\|_{L_t^3 L_x^3} \left\|P_{N_3} F_3\right\|_{L_t^6 L_x^{6}}\\
&&+N\left(\frac{N_2}{N_1}\right)^{\frac{2}{3}} N_1\left\|P_{N_1} v_1\right\|_{L_t^2 L_x^4} N_2\left\|P_{N_2} v_2\right\|_{L_t^3 L_x^3} N_3\left\|P_{N_3} F_3\right\|_{L_t^6 L_x^{6}}\\
&&+N\left(\frac{N_3}{N_2}\right)  N_1\left\|P_{N_1} v_1\right\|_{L_t^2 L_x^4} N_2\left\|P_{N_2} v_2\right\|_{L_t^3 L_x^3}  \left\|P_{N_3} F_3\right\|_{L_t^6 L_x^{6}}\\
&\lesssim&N\left[\frac{1}{N_1}\left(\frac{N_3}{N_2}\right)^{\frac{2}{3}}+\left(\frac{N_3}{N_2}\right)^{\frac{2}{3}}+\left(\frac{N_2}{N_1}\right)^{\frac{2}{3}}+ \frac{N_3}{N_2}  \right]\left\|P_{N_1} v_1\right\|_{X_{N_1}}\left\|P_{N_2} v_2\right\|_{X_{N_2}}\left\|P_{N_3} F_3\right\|_{Y_{N_3}}.
\end{eqnarray*}
\textbf{Case 4:} $\mathbf{v_1F_2F_3}$
\begin{eqnarray*}
&& N\left\|P_N\left(P_{N_1} v_1 P_{N_2} F_2 P_{N_3} F_3\right)\right\|_{L_t^1W_x^{1,2}} \\
&\lesssim& N\left\|P_N\left(P_{N_1} v_1 P_{N_2} F_2 P_{N_3} F_3\right)\right\|_{L_t^1L_x^{2}}+N\left\|\nabla(P_N\left(P_{N_1} v_1 P_{N_2} F_2 P_{N_3} F_3\right))\right\|_{L_t^1L_x^{2}}\\
& \lesssim& N\left[\left\|\nabla P_{N_1} v_1\right\|_{L_t^2 L_x^4}\left\|P_{N_2} F_2\right\|_{L_t^3 L_x^6}\left\|P_{N_3} F_3\right\|_{L_t^6 L_x^{12}}+\left\|P_{N_1} v_1\right\|_{L_t^2 L_x^{4}}\left\|P_{N_2} F_2\right\|_{L_t^3 L_x^6}\left\|P_{N_3} F_3\right\|_{L_t^6 L_x^{12}}\right. \\
&&\left.+\left\| P_{N_1} v_1\right\|_{L_t^2 L_x^{6}}\left\|\nabla P_{N_2} F_2\right\|_{L_t^3 L_x^6}\left\|P_{N_3} F_3\right\|_{L_t^6 L_x^{6}}+\left\|  P_{N_1} v_1\right\|_{L_t^2 L_x^{6}}\left\|P_{N_2} F_2\right\|_{L_t^3 L_x^{6}}\left\|\nabla P_{N_3} F_3\right\|_{L_t^6 L_x^{6}}\right]\\
& \lesssim& N\left[N_1\left\|  P_{N_1} v_1\right\|_{L_t^2 L_x^4}\left\|P_{N_2} F_2\right\|_{L_t^3 L_x^6}\left\|P_{N_3} F_3\right\|_{L_t^6 L_x^{12}}+\left\|P_{N_1} v_1\right\|_{L_t^2 L_x^4}\left\|P_{N_2} F_2\right\|_{L_t^3 L_x^6}\left\|P_{N_3} F_3\right\|_{L_t^6 L_x^{12}}\right. \\
&&\left.+N_2\left\| P_{N_1} v_1\right\|_{L_t^2 L_x^{6}}\left\| P_{N_2} F_2\right\|_{L_t^3 L_x^6}\left\|P_{N_3} F_3\right\|_{L_t^6 L_x^{6}}+N_3\left\|  P_{N_1} v_1\right\|_{L_t^2 L_x^{6}}\left\|P_{N_2} F_2\right\|_{L_t^3 L_x^{6}}\left\|  P_{N_3} F_3\right\|_{L_t^6 L_x^{6}}\right]\\
& \lesssim& N\left\|P_{N_1} v_1\right\|_{L_t^2 L_x^4} \left\|P_{N_2} F_2\right\|_{L_t^3 L_x^6}  \cdot N_3^{\frac{1}{3}}\left\|P_{N_3} F_3\right\|_{L_t^6L_x^{6}}\\
&&+ N\cdot N_1\left\|P_{N_1} v_1\right\|_{L_t^2 L_x^4} \left\|P_{N_2} F_2\right\|_{L_t^3 L_x^6}  \cdot N_3^{\frac{1}{3}}\left\|P_{N_3} F_3\right\|_{L_t^6L_x^{6}} \\
&&+ N\cdot N_1^{\frac{1}{3}}\left\|P_{N_1} v_1\right\|_{L_t^2 L_x^4} N_2  \left\|P_{N_2} F_2\right\|_{L_t^3 L_x^6} \left\|P_{N_3} F_3\right\|_{L_t^6L_x^{6}} \\
&&+ N\cdot N_1^{\frac{1}{3}}\left\|P_{N_1} v_1\right\|_{L_t^2 L_x^4}  \left\|P_{N_2} F_2\right\|_{L_t^3 L_x^6} N_3 \left\|P_{N_3} F_3\right\|_{L_t^6L_x^{6}} \\
& \simeq&\left(\frac{N}{N_1}\right)\left(\frac{N_3}{N_2}\right)^{\frac{1}{3}} N_1\left\|P_{N_1} v_1\right\|_{L_t^2 L_x^4} N_2^{\frac{1}{3}} \left\|P_{N_2} F_2\right\|_{L_t^3 L_x^3} \left\|P_{N_3} F_3\right\|_{L_t^6 L_x^{6}}\\
&&+N\left(\frac{N_3}{N_2}\right)^{\frac{1}{3}} N_1\left\|P_{N_1} v_1\right\|_{L_t^2 L_x^4} N_2^{\frac{1}{3}} \left\|P_{N_2} F_2\right\|_{L_t^3 L_x^3} \left\|P_{N_3} F_3\right\|_{L_t^6 L_x^{6}}\\
&&+N\left(\frac{N_2}{N_1}\right)^{\frac{2}{3}} N_1\left\|P_{N_1} v_1\right\|_{L_t^2 L_x^4} N_2^{\frac{1}{3}}\left\|P_{N_2} F_2\right\|_{L_t^3 L_x^6}  \left\|P_{N_3} F_3\right\|_{L_t^6 L_x^{6}}\\
&&+N\left(\frac{N_3}{N_2}\right)  N_1\left\|P_{N_1} v_1\right\|_{L_t^2 L_x^4} N_2^{\frac{1}{3}}\left\|P_{N_2} F_2\right\|_{L_t^3 L_x^6}  \left\|P_{N_3} F_3\right\|_{L_t^6 L_x^{6}}\\
&\lesssim&N\left[\frac{1}{N_1}\left(\frac{N_3}{N_2}\right)^{\frac{1}{3}}+\left(\frac{N_3}{N_2}\right)^{\frac{1}{3}}+\left(\frac{N_2}{N_1}\right)^{\frac{2}{3}}+ \frac{N_3}{N_2}  \right]\left\|P_{N_1} v_1\right\|_{X_{N_1}}\left\|P_{N_2} F_2\right\|_{Y_{N_2}}\left\|P_{N_3} F_3\right\|_{Y_{N_3}}.
\end{eqnarray*}

Next, we will prove the estimates \eqref{eq4.5}-\eqref{eq4.8} where
a(random, low-regularity) forcing term $F$ appears at highest frequency.

\textbf{Case 5:} $\mathbf{F_1F_2F_3}$

Using Lemma \ref{L2.2} to place the $P_{N_3}F_3$ piece into $W_{\mathbf{e}}^{\frac{4}{2-\varepsilon},\frac{4}{\varepsilon}}$, we have
\begin{equation}\label{eq4.9}
  N^{\frac{1}{2}+\varepsilon}\left\|P_N\left(P_{N_1} F_1 P_{N_2} F_2 P_{N_3} F_3\right)\right\|_{W_{\mathbf{e}}^{\frac{4}{4-\varepsilon},\frac{4}{2+\varepsilon}}  } \lesssim N^{\frac{1}{2}+\varepsilon}\left\|P_{N_1} F_1 P_{N_2} F_2\right\|_{L_{\mathbf{e}}^{2,2}}\left\|P_{N_3} F_3\right\|_{W_{\mathbf{e}}^{\frac{4}{2-\varepsilon},\frac{4}{\varepsilon}}}.
\end{equation}
According to the identity \eqref{eq2.1}, we decompose the highest frequency piece $P_{N_1} F_1$ into
\begin{eqnarray*}
P_{N_1} F_1&= & P_{N_1, \mathbf{e}_1} P_{N_1} F_1+P_{N_1, \mathbf{e}_2}\left(1-P_{N_1, \mathbf{e}_1}\right) P_{N_1} F_1 +P_{N_1, \mathbf{e}_3}\left(1-P_{N_1, \mathbf{e}_2}\right)\left(1-P_{N_1, \mathbf{e}_1}\right) P_{N_1} F_1 \\
&& +P_{N_1, \mathbf{e}_4}\left(1-P_{N_1, \mathbf{e}_3}\right)\left(1-P_{N_1, \mathbf{e}_2}\right)\left(1-P_{N_1, \mathbf{e}_1}\right) P_{N_1} F_1 .
\end{eqnarray*}
We note that the operators $\left(1-P_{N_1, \mathbf{e}_{\ell}}\right)P_{N_1}$ are disposable since their kernels are uniformly bounded in $L_x^1$ and the fact that $L_{\mathbf{e}}^{2,2}=L_{\mathbf{e}_{\ell}}^{2,2}$ for $\ell=1, \ldots, 4$ by Fubini's theorem, we may now use H\"older's inequality to bound \eqref{eq4.9} by
\begin{eqnarray*}
&& N^{\frac{1}{2}+\varepsilon}\left\|P_{N_1} F_1 P_{N_2} F_2\right\|_{L_{\mathbf{e}}^{2,2}}\left\|P_{N_3} F_3\right\|_{W_{\mathbf{e}}^{\frac{4}{2-\varepsilon},\frac{4}{\varepsilon}}}\\
&\lesssim&N^{\frac{1}{2}+\varepsilon}\sum_{\ell=1}^4\left\|P_{N_1, \mathrm{e}_{\ell}} P_{N_1} F_1\right\|_{L_{\mathbf{e}_{\ell}}^{\frac{4}{\varepsilon}, \frac{4}{2-\varepsilon}}}\left\|P_{N_2} F_2\right\|_{L_{\mathbf{e}_{\ell}}^{\frac{4}{2-\varepsilon},\frac{4}{\varepsilon}}}\left\|P_{N_3}F_3\right\|_{W_{\mathbf{e}}^{\frac{4}{2-\varepsilon},\frac{4}{\varepsilon}}}\\
&\simeq&\left(\frac{N}{N_1}\right)^{\frac{1}{2}+\varepsilon}\left(\frac{N_2}{N_1}\right)^{\frac{1}{6}}\left(\frac{N_3}{N_1}\right)^{\frac{1}{6}} \sum_{\ell=1}^4 N_1^{\frac{5}{6}+\varepsilon}\left\|P_{N_1, \mathrm{e}_{\ell}} P_{N_1} F_1\right\|_{L_{\mathbf{e}_{\ell}}^{\frac{4}{\varepsilon}, \frac{4}{2-\varepsilon}}}\cdot N_2^{-\frac{1}{6}}\left\|P_{N_2} F_2\right\|_{L_{\mathbf{e}_{\ell}}^{\frac{4}{2-\varepsilon},\frac{4}{\varepsilon}}} \\
&&\cdot N_3^{-\frac{1}{6}}\left\|P_{N_3}F_3\right\|_{W_{\mathbf{e}}^{\frac{4}{2-\varepsilon},\frac{4}{\varepsilon}}}\\
&\lesssim&\left(\frac{N}{N_1}\right)^{\frac{1}{2}+\varepsilon}\left(\frac{N_2}{N_1}\right)^{\frac{1}{6}}\left(\frac{N_3}{N_1}\right)^{\frac{1}{6}} \sum_{\ell=1}^4 N_1^{\frac{5}{6}+\varepsilon}\left\|P_{N_1, \mathrm{e}_{\ell}} P_{N_1} F_1\right\|_{L_{\mathbf{e}_{\ell}}^{\frac{4}{\varepsilon}, \frac{4}{2-\varepsilon}}}\cdot N_2^{-\frac{1}{6}}\left\|P_{N_2} F_2\right\|_{W_{\mathbf{e}_{\ell}}^{\frac{4}{2-\varepsilon},\frac{4}{\varepsilon}}} \\
&&\cdot N_3^{-\frac{1}{6}}\left\|P_{N_3}F_3\right\|_{W_{\mathbf{e}}^{\frac{4}{2-\varepsilon},\frac{4}{\varepsilon}}}\\
&\lesssim&\left(\frac{N}{N_1}\right)^{\frac{1}{2}+\varepsilon}\left(\frac{N_3}{N_1}\right)^{\frac{1}{6}}\left\|P_{N_1} F_1\right\|_{Y_{N_1}}\left\|P_{N_2} F_2\right\|_{Y_{N_2}}\left\|P_{N_3} F_3\right\|_{Y_{N_3}} .
\end{eqnarray*}

\textbf{Case 6:} $\mathbf{F_1v_2v_3}$

We begin by placing the $P_{N_3} v_3$ piece into $W_{\mathbf{e}}^{\frac{4}{2-\varepsilon},\frac{4}{\varepsilon}}$,
\begin{equation}\label{eq4.10}
  N^{\frac{1}{2}+\varepsilon}\left\|P_N\left(P_{N_1} F_1 P_{N_2} v_2 P_{N_3} v_3\right)\right\|_{W_{\mathbf{e}}^{\frac{4}{4-\varepsilon},\frac{4}{2+\varepsilon}}  } \lesssim N^{\frac{1}{2}+\varepsilon}\left\|P_{N_1} F_1 P_{N_2} v_2\right\|_{L_{\mathbf{e}}^{2,2}}\left\|P_{N_3} v_3\right\|_{W_{\mathbf{e}}^{\frac{4}{2-\varepsilon},\frac{4}{\varepsilon}}}.
\end{equation}
On the one hand, similar to \eqref{eq4.5},
\begin{equation*}
  \left\|P_{N_1} F_1 P_{N_2} v_2\right\|_{L_{\mathbf{e}}^{2,2}} \lesssim \sum_{\ell=1}^4\left\|P_{N_1, \mathbf{e}_{\ell}} P_{N_1} F_1\right\|_{L_{\mathbf{e}_{\ell}}^{\frac{4}{\varepsilon}, \frac{4}{2-\varepsilon}}}\cdot  \sum_{\ell=1}^4\left\|P_{N_2} v_2\right\|_{L_{\mathbf{e}_{\ell}}^{\frac{4}{2-\varepsilon},\frac{4}{\varepsilon}}} ,
\end{equation*}
On the other hand, by using $L_{\mathbf{e}}^{2,2}=L_t^2L_x^2$, we get
\begin{equation*}
  \left\|P_{N_1} F_1 P_{N_2} v_2\right\|_{L_{\mathbf{e}}^{2,2}} \lesssim  \left\|P_{N_1} F_1\right\|_{L_t^6L_x^6}\cdot  \left\|P_{N_2} v_2\right\|_{L_t^3L_x^3}.
\end{equation*}
Now, by interpolation method, we can estimate \eqref{eq4.10} by
\begin{eqnarray*}
&&N^{\frac{1}{2}+\varepsilon}\left\|P_{N_1} F_1 P_{N_2} v_2\right\|_{L_{\mathbf{e}}^{2,2}}\left\|P_{N_3} v_3\right\|_{W_{\mathbf{e}}^{\frac{4}{2-\varepsilon}, \frac{4}{\varepsilon}}}\\
&\lesssim&N^{\frac{1}{2}+\varepsilon}\left(\sum_{\ell=1}^4\left\|P_{N_1, \mathbf{e}_{\ell}} P_{N_1} F_1\right\|_{L_{\mathbf{e}_{\ell}}^{\frac{4}{\varepsilon}, \frac{4}{2-\varepsilon}}}\right)^{\frac{2(1-3\varepsilon)}{3(1-2\varepsilon)}}\cdot\left( \left\|P_{N_1} F_1\right\|_{L_t^6 L_x^6}\right)^{1-\frac{2(1-3\varepsilon)}{3(1-2\varepsilon)}}\cdot\left( \left\|P_{N_2} v_2\right\|_{L_t^3 L_x^3}\right)^{1-\frac{2(1-3\varepsilon)}{3(1-2\varepsilon)}}\\
&&\cdot\left(\sum_{\ell=1}^4\left\|P_{N_2} v_2\right\|_{L_{\mathbf{e}_{\ell}}^{\frac{4}{2-\varepsilon},\frac{4}{\varepsilon}}}\right)^{\frac{2(1-3\varepsilon)}{3(1-2\varepsilon)}}\cdot\left\|P_{N_3} v_3\right\|_{W_{\mathbf{e}}^{\frac{4}{2-\varepsilon},\frac{4}{\varepsilon}}}\\
&\lesssim&\left(\frac{N}{N_2}\right)^{\frac{1}{2}+\varepsilon}\left(\frac{N_3}{N_2}\right)^{\frac{1}{2}-\varepsilon}\cdot \left(\sum_{\ell=1}^4 N_1^{\frac{5}{6}+  \varepsilon}\left\|P_{N_1, \mathbf{e}_{\ell}} P_{N_1} F_1\right\|_{L_{\mathbf{e}_{\ell}}^{\frac{4}{\varepsilon}, \frac{4}{2-\varepsilon}}}\right)^{\frac{2(1-3\varepsilon)}{3(1-2\varepsilon)}}\cdot N_3^{-\frac{1}{2}+\varepsilon}\left\|P_{N_3} v_3\right\|_{W_{\mathbf{e}}^{\frac{4}{2-\varepsilon},\frac{4}{\varepsilon}}}   \\
&& \cdot \left( \left\|P_{N_1} F_1\right\|_{L_t^6 L_x^6}\right)^{1-\frac{2(1-3\varepsilon)}{3(1-2\varepsilon)}}\cdot\left(\sum_{\ell=1}^4N_2^{-\frac{1}{2}+\varepsilon} \left\|P_{N_2} v_2\right\|_{L_{\mathbf{e}_{\ell}}^{ \frac{4}{2-\varepsilon},\frac{4}{\varepsilon}}}\right)^{\frac{2(1-3\varepsilon)}{3(1-2\varepsilon)}}\cdot\left(N_2 \left\|P_{N_2} v_2\right\|_{L_t^3 L_x^3}\right)^{1-\frac{2(1-3\varepsilon)}{3(1-2\varepsilon)}}
\\
&\lesssim&\left(\frac{N}{N_2}\right)^{\frac{1}{2}+\varepsilon}\left(\frac{N_3}{N_2}\right)^{\frac{1}{2}-\varepsilon}\left\|P_{N_1} F_1\right\|_{Y_{N_1}}\left\|P_{N_2} v_2\right\|_{X_{N_2}}\left\|P_{N_3} v_3\right\|_{X_{N_3}} .
\end{eqnarray*}

\textbf{Case 7:} $\mathbf{F_1F_2v_3}$

We begin by placing the $P_{N_2} F_2$ piece into $W_{\mathbf{e}}^{\frac{4}{2-\varepsilon},\frac{4}{\varepsilon}}$,
\begin{equation}\label{eq4.11}
  N^{\frac{1}{2}+\varepsilon}\left\|P_N\left(P_{N_1} F_1 P_{N_2} F_2 P_{N_3} v_3\right)\right\|_{W_{\mathbf{e}}^{\frac{4}{4-\varepsilon},\frac{4}{2+\varepsilon}}  } \lesssim N^{\frac{1}{2}+\varepsilon}\left\|P_{N_1} F_1 P_{N_3} v_3\right\|_{L_{\mathbf{e}}^{2,2}}\left\|P_{N_2} F_2\right\|_{W_{\mathbf{e}}^{\frac{4}{2-\varepsilon},\frac{4}{\varepsilon}}}.
\end{equation}
Interpolating as in the proof of \eqref{eq4.6}, we can estimate \eqref{eq4.11} by
\begin{eqnarray*}
&& N^{\frac{1}{2}+\varepsilon}\left\|P_{N_1} F_1 P_{N_3} v_3\right\|_{L_{\mathbf{e}}^{2,2}}\left\|P_{N_2} F_2\right\|_{W_{\mathbf{e}}^{\frac{4}{2-\varepsilon},\frac{4}{\varepsilon}}}\\
& \lesssim& N^{\frac{1}{2}+\varepsilon}\left(\sum_{\ell=1}^4\left\|P_{N_1, \mathbf{e}_{\ell}} P_{N_1} F_1\right\|_{L_{\mathbf{e}_{\ell}}^{\frac{4}{\varepsilon}, \frac{4}{2-\varepsilon}}}\right)^{\frac{1-3\varepsilon}{2-3\varepsilon}}\left(\left\|P_{N_1} F_1\right\|_{L_t^6 L_x^6}\right)^{1-\frac{1-3\varepsilon}{2-3\varepsilon}} \cdot\left\|P_{N_2} F_2\right\|_{W_{\mathbf{e}}^{\frac{4}{2-\varepsilon},\frac{4}{\varepsilon}}}\\
&&\cdot\left(\sum_{\ell=1}^4\left\| P_{N_3}v_3\right\|_{L_{\mathbf{e}_{\ell}}^{ \frac{4}{2-\varepsilon},\frac{4}{\varepsilon}}}\right)^{\frac{1-3\varepsilon}{2-3\varepsilon}}\left(\left\|P_{N_3} v_3\right\|_{L_t^3 L_x^3}\right)^{1-\frac{1-3\varepsilon}{2-3\varepsilon}}\\
&\lesssim&\left(\frac{N}{N_3}\right)^{\frac{1}{2}+\varepsilon}\left(\frac{N_2}{N_1}\right)^{\frac{1}{6}} \cdot\left(\sum_{\ell=1}^4 N_1^{\frac{5}{6}+\varepsilon}\left\|P_{N_1, \mathbf{e}_{\ell}} P_{N_1} F_1\right\|_{L_{\mathbf{e}_{\ell}}^{\frac{4}{\varepsilon}, \frac{4}{2-\varepsilon}}}\right)^{\frac{1-3\varepsilon}{2-3\varepsilon}}\cdot N_2^{-\frac{1}{6}}\left\|P_{N_2} F_2\right\|_{W_{\mathbf{e}}^{\frac{4}{2-\varepsilon},\frac{4}{\varepsilon}}}\\
&& \left(\left\|P_{N_1} F_1\right\|_{L_t^6 L_x^6}\right)^{1-\frac{1-3\varepsilon}{2-3\varepsilon}}\left(\sum_{\ell=1}^4 N_3^{-\frac{1}{2}+\varepsilon}\left\|P_{N_3} v_3\right\|_{L_{\mathbf{e}_{\ell}}^{ \frac{4}{2-\varepsilon},\frac{4}{\varepsilon}}}\right)^{\frac{1-3\varepsilon}{2-3\varepsilon}}\left(N_3\left\|P_{N_3} v_3\right\|_{L_t^3 L_x^3}\right)^{1-\frac{1-3\varepsilon}{2-3\varepsilon}} \\
&\lesssim&\left(\frac{N}{N_3}\right)^{\frac{1}{2}+\varepsilon}\left(\frac{N_2}{N_1}\right)^{\frac{1}{6}} \cdot\left\|P_{N_1} F_1\right\|_{Y_{N_1}}\left\|P_{N_2} F_2\right\|_{Y_{N_2}}\left\|P_{N_3} v_3\right\|_{X_{N_3}} .
\end{eqnarray*}

\textbf{Case 8:} $\mathbf{F_1v_2F_3}$

We begin by placing the $P_{N_3} F_3$ piece into $W_{\mathbf{e}}^{\frac{4}{2-\varepsilon},\frac{4}{\varepsilon}}$,
\begin{equation}\label{eq4.12}
  N^{\frac{1}{2}+\varepsilon}\left\|P_N\left(P_{N_1} F_1 P_{N_2} v_2 P_{N_3} F_3\right)\right\|_{W_{\mathbf{e}}^{\frac{4}{4-\varepsilon},\frac{4}{2+\varepsilon}}  } \lesssim N^{\frac{1}{2}+\varepsilon}\left\|P_{N_1} F_1 P_{N_2} v_2\right\|_{L_{\mathbf{e}}^{2,2}}\left\|P_{N_3} F_3\right\|_{W_{\mathbf{e}}^{\frac{4}{2-\varepsilon},\frac{4}{\varepsilon}}}.
\end{equation}
Analogously to the previous two cases,  we can estimate \eqref{eq4.12} by
\begin{eqnarray*}
&& N^{\frac{1}{2}+\varepsilon}\left\|P_{N_1} F_1 P_{N_2} v_2\right\|_{L_{\mathbf{e}}^{2,2}}\left\|P_{N_3} F_3\right\|_{W_{\mathbf{e}}^{\frac{4}{2-\varepsilon},\frac{4}{\varepsilon}}} \\
& \lesssim& N^{\frac{1}{2}+\varepsilon}\left(\sum_{\ell=1}^4\left\|P_{N_1, \mathbf{e}_{\ell}} P_{N_1} F_1\right\|_{L_{\mathbf{e}_{\ell}}^{\frac{4}{\varepsilon}, \frac{4}{2-\varepsilon}}}\right)^{\frac{1+3\varepsilon}{1+6\varepsilon}}\cdot\left(\left\|P_{N_1} F_1\right\|_{L_t^6 L_x^6}\right)^{1-\frac{1+3\varepsilon}{1+6\varepsilon}} \cdot\left(\sum_{\ell=1}^4\left\|P_{N_2} v_2\right\|_{L_{\mathbf{e}_{\ell}}^{\frac{4}{2-\varepsilon}, \frac{4}{\varepsilon}}}\right)^{\frac{1+3\varepsilon}{1+6\varepsilon}}\\
&&\cdot\left(\left\|P_{N_2} v_2\right\|_{L_t^3 L_x^3}\right)^{1-\frac{1+3\varepsilon}{1+6\varepsilon}}\cdot\left\|P_{N_3} F_3\right\|_{W_{\mathbf{e}}^{\frac{4}{2-\varepsilon},\frac{4}{\varepsilon}}}\\
&\lesssim&\left(\frac{N}{N_2}\right)^{\frac{1}{2}+\varepsilon}\left(\frac{N_3}{N_2}\right)^{\frac{1}{6} }\cdot\left(\sum_{\ell=1}^4 N_1^{\frac{5}{6}+\varepsilon}\left\|P_{N_1, \mathbf{e}_{\ell}} P_{N_1} F_1\right\|_{L_{\mathbf{e}_{\ell}}^{\frac{4}{\varepsilon}, \frac{4}{2-\varepsilon}}}\right)^{\frac{1+3\varepsilon}{1+6\varepsilon}}\left(\left\|P_{N_1} F_1\right\|_{L_t^6 L_x^6}\right)^{1-\frac{1+3\varepsilon}{1+6\varepsilon}} \\
&&\left(\sum_{\ell=1}^4N_2^{-\frac{1}{2}+\varepsilon}\left\|P_{N_2} v_2\right\|_{L_{\mathbf{e}_{\ell}}^{\frac{4}{2-\varepsilon}, \frac{4}{\varepsilon}}}\right)^{\frac{1+3\varepsilon}{1+6\varepsilon}}\cdot\left(N_2\left\|P_{N_2} v_2\right\|_{L_t^3 L_x^3}\right)^{1-\frac{1+3\varepsilon}{1+6\varepsilon}}\cdot N_3^{-\frac{1}{6}}\left\|P_{N_3} F_3\right\|_{W_{\mathbf{e}}^{\frac{4}{2-\varepsilon},\frac{4}{\varepsilon}}}\\
&\lesssim&\left(\frac{N}{N_2}\right)^{\frac{1}{2}+\varepsilon}\left(\frac{N_3}{N_2}\right)^{\frac{1}{6} }\left\|P_{N_1} F_1\right\|_{Y_{N_1}}\left\|P_{N_2} v_2\right\|_{X_{N_2}}\left\|P_{N_3} F_3\right\|_{Y_{N_3}} .
\end{eqnarray*}

\end{proof}
The frequency localized, trilinear estimates \eqref{eq4.1}-\eqref{eq4.8} imply an important set of nonlinear estimates that we will need for the proofs of the almost sure local well-posedness result of Theorem \ref{t1.1}. More precisely, given any time interval $I$, any forcing term $F \in Y(I)$, and any $v, v_1, v_2, u, w \in X(I)$, it is an easy consequence of the exponential gains in the frequency differences in the trilinear estimates \eqref{eq4.1}-\eqref{eq4.8} to conclude that
\begin{eqnarray}\label{eq4.13}
\left\||F+v|^2(F+v)\right\|_{G(I)} &\lesssim&\|v\|_{X(I)}^3+\|v\|_{X(I)}^2\|F\|_{Y(I)}+\|v\|_{X(I)}\|F\|_{Y(I)}^2+\|F\|_{Y(I)}^3 \nonumber\\
& \lesssim&\|F\|_{Y(I)}^3+\|v\|_{X(I)}^3
\end{eqnarray}
as well as
\begin{eqnarray}\label{eq4.14}
&&\| |F+ v_1|^2(F+v_1)-|F+v_2|^2(F+v_2) \|_{G(I)} \nonumber\\
 &\lesssim&\left\|v_1-v_2\right\|_{X(I)}\left(\|F\|_{Y(I)}^2+\left\|v_1\right\|_{X(I)}^2+\left\|v_2\right\|_{X(I)}^3\right).
\end{eqnarray}
Moreover, using that
\begin{eqnarray*}
|F+u+w|^2(F+u+w)-|u|^2 u&=& \left.F\right|^2 F+|w|^2 w+O\left(F^2 u\right)+O\left(F^2 w\right)+O\left(F u^2\right) \\
&& +O(F u w)+O\left(F w^2\right)+O\left(u^2 w\right)+O\left(u w^2\right),
\end{eqnarray*}
we may also infer that
\begin{eqnarray}\label{eq4.15}
&&\left\||F+u+w|^2(F+u+w)-|u|^2 u\right\|_{G(I)} \nonumber\\
& \lesssim&\|F\|_{Y(I)}^3+\|w\|_{X(I)}^3+\|F\|_{Y(I)}\|u\|_{X(I)}^2+\|u\|_{X(I)}^2\|w\|_{X(I)}.
\end{eqnarray}

\section{Almost sure bounds}
In this section we establish various almost sure bounds for the free evolution of the
random data.
\begin{lemma}{\em(see \cite[Lemma 3.1]{{NBI2008}})}\label{L5.1}
Let $\left\{g_n\right\}_{n=1}^{\infty}$ be a sequence of real-valued, independent, zero-mean random variables with associated distributions $\left\{\mu_n\right\}_{n=1}^{\infty}$ on a probability space $(\Omega, \mathcal{A}, \mathbb{P})$. Assume that the distributions satisfy the property that there exists $c>0$ such that
$$
\left|\int_{-\infty}^{+\infty} e^{\gamma x} d \mu_n(x)\right| \leq e^{c \gamma^2} \text { for all } \gamma \in \mathbb{R} \text { and for all } n \in \mathbb{N}.
$$
Then there exists $\alpha>0$ such that for every $\lambda>0$ and every sequence $\left\{c_n\right\}_{n=1}^{\infty} \in \ell^2(\mathbb{N} ; \mathbb{C})$ of complex numbers,
$$
\mathbb{P}\left(\left\{\omega:\left|\sum_{n=1}^{\infty} c_n g_n(\omega)\right|>\lambda\right\}\right) \leq 2 \exp \left(-\alpha \frac{\lambda^2}{\sum_n\left|c_n\right|^2}\right).
$$
As a consequence there exists $C>0$ such that for every $2 \leq p<\infty$ and every $\left\{c_n\right\}_{n=1}^{\infty} \in$ $\ell^2(\mathbb{N} ; \mathbb{C})$
$$
\left\|\sum_{n=1}^{\infty} c_n g_n(\omega)\right\|_{L_\omega^p(\Omega)} \leq C \sqrt{p}\left(\sum_{n=1}^{\infty}\left|c_n\right|^2\right)^{1 / 2}.
$$
\end{lemma}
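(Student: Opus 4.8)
The plan is to prove this classical sub-Gaussian large-deviation (Bernstein-type) estimate by the standard exponential Chebyshev argument, after reducing to real coefficients and finite sums, and then to deduce the $L^p_\omega$ bound by integrating the resulting tail estimate against the layer-cake formula.

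First I would reduce to real coefficients: writing $c_n = a_n + ib_n$ with $a_n,b_n\in\mathbb{R}$ and using that the $g_n$ are real-valued, $\sum_n c_n g_n = \sum_n a_n g_n + i\sum_n b_n g_n$, so $\{|\sum_n c_n g_n|>\lambda\}\subseteq\{|\sum_n a_n g_n|>\lambda/\sqrt2\}\cup\{|\sum_n b_n g_n|>\lambda/\sqrt2\}$, and it suffices to treat real coefficients at the cost of a factor $2$ and a worse constant. For real $c_n$ I would work with the partial sums $S_M := \sum_{n=1}^M c_n g_n$; for $t>0$, Markov's inequality applied to $e^{tS_M}$, independence of the $g_n$, and the hypothesis (note that $\int e^{tc_n x}\,d\mu_n(x)>0$, so the modulus is harmless) give
\[ \mathbb{P}(S_M>\lambda) \le e^{-t\lambda}\,\mathbb{E}\big[e^{tS_M}\big] = e^{-t\lambda}\prod_{n=1}^M\int_{\mathbb{R}} e^{tc_n x}\,d\mu_n(x) \le \exp\Big(-t\lambda + ct^2\textstyle\sum_{n=1}^M c_n^2\Big). \]
Optimising over $t>0$ (the minimiser is $t=\lambda/(2c\sigma_M^2)$ with $\sigma_M^2:=\sum_{n=1}^M c_n^2$) yields $\mathbb{P}(S_M>\lambda)\le\exp(-\lambda^2/(4c\sigma_M^2))$; running the same argument for $-S_M$ and adding gives $\mathbb{P}(|S_M|>\lambda)\le 2\exp(-\lambda^2/(4c\sum_n c_n^2))$.

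Since $\sum_n|c_n|^2<\infty$, the $S_M$ are Cauchy in $L^2_\omega(\Omega)$; passing to an a.s.\ convergent subsequence and applying Fatou's lemma transfers this bound to $X:=\sum_n c_n g_n$, and combining with the real/imaginary splitting produces the asserted estimate with some $\alpha>0$ depending only on $c$. For the $L^p_\omega$ bound, write $\sigma^2=\sum_n|c_n|^2$; the layer-cake formula together with the tail bound gives, for $2\le p<\infty$,
\[ \mathbb{E}\big[|X|^p\big] = p\int_0^\infty \lambda^{p-1}\,\mathbb{P}(|X|>\lambda)\,d\lambda \le 2p\int_0^\infty \lambda^{p-1} e^{-\alpha\lambda^2/\sigma^2}\,d\lambda = p\,\Big(\frac{\sigma^2}{\alpha}\Big)^{p/2}\Gamma\!\Big(\frac{p}{2}\Big), \]
the last step being the substitution $u=\alpha\lambda^2/\sigma^2$. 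Taking $p$-th roots gives $\|X\|_{L^p_\omega}\le(\sigma^2/\alpha)^{1/2}(p\,\Gamma(p/2))^{1/p}$, and Stirling's formula shows $(p\,\Gamma(p/2))^{1/p}\le C\sqrt p$ uniformly for $p\ge 2$; hence $\|X\|_{L^p_\omega}\le C\sqrt p\,(\sum_n|c_n|^2)^{1/2}$, as asserted.

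There is no essential difficulty in this argument; the only points requiring a little care are keeping the constant $\alpha$ uniform when passing from the finite partial sums $S_M$ to the infinite series (and uniform over the sequence $\{c_n\}$), and verifying the Stirling asymptotics that pin down the $\sqrt p$-growth of the constant in the $L^p_\omega$ estimate.
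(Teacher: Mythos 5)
The paper does not prove this lemma at all---it is simply quoted from Burq--Tzvetkov \cite[Lemma 3.1]{NBI2008}---so there is no internal proof to compare against, and your argument is exactly the standard Chernoff/exponential-moment derivation (Markov applied to $e^{tS_M}$, independence, optimisation in $t$, passage to the limit, then layer-cake plus Stirling for the $L^p$ bound) that underlies the cited result. The proof is correct; the one small point worth noting is that the real/imaginary reduction gives a prefactor $4$, not $2$, which is absorbed by shrinking $\alpha$ (since $\mathbb{P}\le 1$ handles small $\lambda$ and the extra factor is dominated by a slight loss in the exponent for large $\lambda$)---you implicitly acknowledge this by allowing $\alpha$ to depend only on $c$.
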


\begin{lemma}{\em(see \cite[Lemma 5.2]{{BDJL2019}})}\label{L5.2}
Let $F$ be a real-valued measurable function on a probability space $(\Omega, \mathcal{A}, \mathbb{P})$. Suppose that there exist $C_0>0, K>0$ and $p_0 \geq 1$ such that for every $p \geq p_0$ we have
$$
\|F\|_{L_\omega^p(\Omega)} \leq \sqrt{p} C_0 K.
$$
Then there exist $c>0$ and $C_1>0$, depending on $C_0$ and $p_0$ but independent of $K$, such that for every $\lambda>0$,
\begin{equation*}
  \mathbb{P}(\{\omega \in \Omega:|F(\omega)|>\lambda\}) \leq C_1 e^{-c\frac{ \lambda^2}{ K^2}}.
\end{equation*}
In particular, it follows that
\begin{equation*}
  \mathbb{P}(\{\omega \in \Omega:|F(\omega)|<\infty\})=1.
\end{equation*}
\end{lemma}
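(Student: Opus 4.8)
The plan is to combine Chebyshev's inequality with an optimization over the Lebesgue exponent $p$. For any $p\geq p_0$, Chebyshev's inequality and the hypothesis $\|F\|_{L_\omega^p(\Omega)}\leq \sqrt{p}\,C_0K$ give
\begin{equation*}
\mathbb{P}\bigl(\{\omega\in\Omega:|F(\omega)|>\lambda\}\bigr)\leq \frac{\|F\|_{L_\omega^p(\Omega)}^p}{\lambda^p}\leq\left(\frac{\sqrt{p}\,C_0K}{\lambda}\right)^{\!p}.
\end{equation*}
The right-hand side is essentially minimized, as a function of $p$, when $\sqrt{p}\,C_0K/\lambda\sim e^{-1/2}$, which suggests the choice $p=\lambda^2/(eC_0^2K^2)$; this is admissible precisely when $\lambda\geq\sqrt{e p_0}\,C_0K$.

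First I would treat the main regime $\lambda\geq\sqrt{e p_0}\,C_0K$. Setting $p:=\lambda^2/(eC_0^2K^2)\geq p_0$ in the displayed bound yields
\begin{equation*}
\mathbb{P}\bigl(\{\omega\in\Omega:|F(\omega)|>\lambda\}\bigr)\leq \bigl(e^{-1/2}\bigr)^{p}=\exp\!\left(-\frac{\lambda^2}{2eC_0^2K^2}\right),
\end{equation*}
which is the asserted inequality with $c=\tfrac{1}{2eC_0^2}$ in this range. If one prefers to keep $p$ an integer, one replaces this value by $\max\{p_0,\lfloor\lambda^2/(eC_0^2K^2)\rfloor\}$ and absorbs the resulting harmless constant; this does not affect $c$ qualitatively.

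Next I would handle the complementary regime $\lambda<\sqrt{e p_0}\,C_0K$ using only the trivial bound $\mathbb{P}(\{|F|>\lambda\})\leq 1$. In this range $\lambda^2/K^2<ep_0C_0^2$, so $e^{-c\lambda^2/K^2}\geq e^{-cep_0C_0^2}=e^{-p_0/2}$, and hence the claimed estimate holds as soon as $C_1\geq e^{p_0/2}$. Taking $C_1:=e^{p_0/2}$ therefore makes the bound $\mathbb{P}(\{|F|>\lambda\})\leq C_1e^{-c\lambda^2/K^2}$ valid for all $\lambda>0$, with $c$ and $C_1$ depending only on $C_0$ and $p_0$ and independent of $K$, as required. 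The ``in particular'' statement is then immediate: since $\{|F|=\infty\}\subseteq\{|F|>\lambda\}$ for every $\lambda>0$, monotonicity of the measure gives $\mathbb{P}(\{|F|=\infty\})\leq\inf_{\lambda>0}C_1e^{-c\lambda^2/K^2}=0$, so $\mathbb{P}(\{|F|<\infty\})=1$.

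The argument is entirely elementary, and there is no genuine obstacle; the only point requiring a little care is that the optimizing exponent $p=\lambda^2/(eC_0^2K^2)$ drops below the admissible threshold $p_0$ for small $\lambda$, which is exactly why the case distinction above is needed.
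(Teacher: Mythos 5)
The paper states this lemma by reference to \cite[Lemma~5.2]{BDJL2019} and does not reproduce a proof; the argument you give — Chebyshev's inequality followed by optimizing the exponent $p$, with the small-$\lambda$ regime handled by the trivial bound and absorbed into $C_1$ — is exactly the standard proof found in that reference and in the wider literature on sub-Gaussian tail bounds from moment growth. Your calculation is correct (the optimizer $p=\lambda^2/(eC_0^2K^2)$ yields $e^{-\lambda^2/(2eC_0^2K^2)}$, the threshold $\lambda\ge\sqrt{ep_0}\,C_0K$ is identified correctly, and $C_1=e^{p_0/2}$ suffices for the complementary range); the aside about keeping $p$ an integer is unnecessary since the hypothesis and the $L^p$ scale make sense for all real $p\ge p_0$, but it does no harm.
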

Using the same technique as in \cite{BDJL2019}, we can easily obtain the following lemma.
\begin{lemma}\label{L5.3}
There exists an constant $C \geq 1$ such that for all $k \in \mathbb{Z}^4$ with $|k| \geq 10$ and for each $\ell=1, \ldots, 4$, it holds that
\begin{equation}\label{eq5.1}
  \left\|e^{i t H} P_k f\right\|_{L_{\mathbf{e}_{\ell}}^{2, \infty}\left(\mathbb{R} \times \mathbb{R}^4\right)} \leq C|k|^{\frac{1}{2}}\left\|P_k f\right\|_{L_x^2\left(\mathbb{R}^4\right)}.
\end{equation}
\end{lemma}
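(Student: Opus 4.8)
Estimate \eqref{eq5.1} is the single--frequency--cube refinement of the dyadic maximal function bound $\|e^{itH}P_N f\|_{L_{\mathbf{e}_{\ell}}^{2,\infty}}\lesssim N^{3/2}\|f\|_{L^2_x}$ used in the proof of Lemma \ref{L3.1}: a single unit cube centred at $k$ with $|k|\sim N$ obeys a sharper estimate, with $|k|^{1/2}$ in place of $|k|^{3/2}$. My plan is to prove this first for the free propagator ($V\equiv0$) by a Galilean boost, and then to pass to $H=-\Delta+V$ as in Section 3, using that under \eqref{eq1.2}--\eqref{eq1.3} the propagator $e^{itH}$ enjoys the same dispersive, local smoothing and maximal function estimates as the free one (this is the input of Lemma \ref{L3.1}, see \cite{JWZY2024}).

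For the free evolution, write $\widehat{P_k f}(\xi)=\psi(\xi-k)\widehat f(\xi)$ and set $g:=e^{-ix\cdot k}P_k f$, so that $\widehat g$ is supported in the unit ball $B(0,1)$ and $\|g\|_{L^2_x}=\|P_k f\|_{L^2_x}$. Completing the square in the symbol yields the Galilean identity (up to the sign convention for the free propagator)
$$
\big(e^{it\Delta}P_k f\big)(x)=e^{i(x\cdot k-t|k|^2)}\big(e^{it\Delta}g\big)(x-2tk),
$$
and the unimodular prefactor drops out of every $L^p$--based norm. Writing $x=(x_\ell,x')$, $k=(k_\ell,k')$ and using that for each fixed $t$ the translation $x'\mapsto x'-2tk'$ maps $\mathbb{R}^3$ onto itself, we obtain
$$
\big\|e^{it\Delta}P_k f\big\|_{L_{\mathbf{e}_{\ell}}^{2,\infty}(\mathbb{R}\times\mathbb{R}^4)}=\Big\|\,\sup_{t\in\mathbb{R}}\,\big\|(e^{it\Delta}g)(x_\ell-2tk_\ell,\cdot)\big\|_{L^{\infty}_{y'}(\mathbb{R}^3)}\,\Big\|_{L^2_{x_\ell}(\mathbb{R})}.
$$
If $k_\ell=0$ the right--hand side is $\|e^{it\Delta}g\|_{L_{\mathbf{e}_{\ell}}^{2,\infty}}\lesssim\|g\|_{L^2_x}=\|P_k f\|_{L^2_x}\le|k|^{1/2}\|P_k f\|_{L^2_x}$, where the first bound is the estimate of \cite[Lemma 4.1]{AICK2007} summed over the (geometrically convergent) dyadic scales $N\lesssim1$, valid since $g$ is unit--frequency, and $|k|^{1/2}\ge1$ as $|k|\ge10$. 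Since $k\in\mathbb{Z}^4$, it remains to handle $|k_\ell|\ge1$, and there it suffices to show the right--hand side is $\lesssim|k_\ell|^{1/2}\|g\|_{L^2_x}$, which gives \eqref{eq5.1} because $|k_\ell|\le|k|$.

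For $|k_\ell|\ge1$ the inner supremum is taken along the family of parallel lines $\{(t,x_\ell-2tk_\ell):t\in\mathbb{R}\}$ in $(t,w)$--space, whose slope is proportional to $k_\ell$, and I would run the $TT^*$ argument of \cite{BDJL2019}. By the Galilean identity, the kernel of $TT^*$ is a space--time translate by $2(t-s)k$ of the unit--frequency free Schr\"odinger kernel $\widetilde K(\tau,z)=(e^{-i\tau\Delta}m)(z)$ with $\widehat m=\psi^2\in C_c^{\infty}(B(0,1))$; by stationary phase $|\widetilde K(\tau,z)|\lesssim(1+|\tau|)^{-2}$ on the region $\{|z|\le4(1+|\tau|)\}$ and is rapidly decaying in $|z|$ off that region. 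Inserting this bound, the $L^{\infty}$ in the transverse and time variables is absorbed by the pointwise kernel estimates, while the $L^2_{x_\ell}$--integration, via the transport relation $x_\ell-2(t-s)k_\ell\approx y_\ell$, is converted into a $t$--integration at the cost of a Jacobian factor $|k_\ell|^{-1}$; after all integrations the square of the norm is $\lesssim|k_\ell|\,\|g\|_{L^2_x}^2$, as required. (The mechanism is already visible from a single wave packet: a unit--$L^2$ packet at frequency $k$ is, at time $t$, localized near $x=2tk$ with amplitude $\sim\langle t\rangle^{-2}$, and $\int_{\mathbb{R}}\big(\sup_t\langle t\rangle^{-2}\mathbf{1}_{\{|x_\ell-2tk_\ell|\lesssim\langle t\rangle\}}\big)^2\,dx_\ell\sim|k_\ell|$.) This stationary--phase and kernel computation is the only substantive step; it is routine and is carried out in \cite{BDJL2019}.

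For $H=-\Delta+V$ the Galilean boost is unavailable, so I would argue as for the dyadic estimates in the proof of Lemma \ref{L3.1}: under \eqref{eq1.2}--\eqref{eq1.3} the dispersive and local smoothing bounds for $e^{itH}$ that enter the above argument continue to hold (see \cite{JWZY2024}), together with the directional decomposition \eqref{eq2.1}, so the same $TT^*$ scheme applies with $e^{it\Delta}$ replaced by $e^{itH}$; alternatively one may expand $e^{itH}=e^{it\Delta}+i\int_0^t e^{i(t-s)\Delta}V\,e^{isH}\,ds$ and estimate the Duhamel term in $L_{\mathbf{e}_{\ell}}^{2,\infty}$ using $V\in L^{d/2}$ and the free estimate. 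The main obstacle is twofold: obtaining the \emph{sharp} exponent $|k_\ell|^{1/2}$ rather than the lossy $|k_\ell|^{3/2}$ inherited from the dyadic bound, which is precisely what forces the Galilean boost and the careful kernel estimate; and, when $V\neq0$, the loss of that symmetry, which is circumvented only because the requisite smoothing estimates for $e^{itH}$ are already known under \eqref{eq1.2}--\eqref{eq1.3}.
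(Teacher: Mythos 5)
Your free--propagator argument --- Galilean boost to a unit--frequency packet followed by $TT^*$ with the stationary--phase envelope bound --- is exactly the mechanism in \cite{BDJL2019} that the paper points to; the paper's entire ``proof'' of Lemma~\ref{L5.3} is the sentence ``using the same technique as in \cite{BDJL2019}.'' For $V\equiv 0$ you are therefore reproducing what the paper intends, and your closing wave--packet heuristic correctly locates the $|k_\ell|^{1/2}$ gain (versus the dyadic $N^{3/2}$).

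The passage to $H=-\Delta+V$ is where both your sketch and the paper's citation are thin, and neither of your two proposed fixes closes the gap as stated. Running $TT^*$ directly on $e^{itH}$ would require pointwise kernel bounds for $P_k\, e^{i(t-s)H} P_k$ of the same shape as the free kernel (an envelope of amplitude $(1+|t-s|)^{-2}$ riding along $x\approx y+2(t-s)k$); the dispersive, Strichartz and local--smoothing package quoted from \cite{JWZY2024} is strictly weaker and does not give such a localized kernel, and it is precisely the Galilean boost --- which $V$ destroys --- that turns the high--frequency free kernel into a unit--frequency one to which stationary phase applies. In the Duhamel route, the source $V\,e^{isH}P_k f$ is no longer localized to a unit frequency cube, because multiplication by $V$ smears the frequency support, so the unit--cube maximal estimate cannot be fed back into the integrand; all you inherit there is the lossy dyadic bound $N^{3/2}$ from Lemma~\ref{L3.1}, not $|k|^{1/2}$. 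Making the perturbed case rigorous requires an additional input --- e.g.\ $L^p$--boundedness of the wave operators $W_\pm$ under \eqref{eq1.2}--\eqref{eq1.3}, so that $e^{itH}=W_\pm e^{it\Delta}W_\pm^*$ transfers the free estimate, combined with an argument to relocalize in frequency after conjugation --- and the paper does not supply that input either. In short: your argument matches the paper's for the free case, and the obstruction you flag in the potential case is a genuine gap, but it is one shared with the paper's own one--line treatment.
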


\begin{lemma}\label{L5.4}
Let $\frac{1}{3}<s<1$ and let $0<\varepsilon<\frac{1}{3}\left(s-\frac{1}{3}\right)$. Let $f \in H_x^s(\mathbb{R}^4)$ and denote by $f^\omega$ the randomization of $f$ as defined in \eqref{eq2.99}. Then there exist absolute constants $C>0$ and $c>0$ such that for any $\lambda>0$ it holds that
$$
\mathbb{P}\left(\left\{\omega \in \Omega:\left\|e^{i t H} f^\omega\right\|_{Y(\mathbb{R})}>\lambda\right\}\right) \leq C \exp \left(-c \lambda^2\|f\|_{H_x^s\left(\mathbb{R}^4\right)}^{-2}\right).
$$
In particular, we have for almost every $\omega \in \Omega$ that
$$
\left\|e^{i t H} f^\omega\right\|_{Y(\mathbb{R})}<\infty.
$$
\end{lemma}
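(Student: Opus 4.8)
The plan is the familiar two-step probabilistic scheme. First, by Lemma \ref{L5.2} it suffices to produce absolute constants $C_0>0$ and $p_0\geq 2$ with
$$
\bigl\|\,\|e^{itH}f^\omega\|_{Y(\mathbb{R})}\,\bigr\|_{L^p_\omega(\Omega)}\leq C_0\sqrt p\,\|f\|_{H^s_x(\mathbb{R}^4)}\qquad\text{for every }p\geq p_0 ,
$$
since then Lemma \ref{L5.2}, applied with $K=\|f\|_{H^s_x}$, furnishes both the claimed exponential tail bound and the almost sure finiteness. From now on $p$ is taken larger than $2$ and larger than every Lebesgue exponent appearing in the definition of the dyadic norms $Y_N$ (the largest being $4/\varepsilon$), which is still a fixed finite number; this serves as $p_0$.

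To bound the moment I would unfold the $Y$-norm and apply the Gaussian square-function estimate of Lemma \ref{L5.1} blockwise. Recall that $\|F\|_{Y(\mathbb{R})}$ is an $\ell^2_N$ combination over dyadic $N$ of the pieces constituting $\|P_NF\|_{Y_N(\mathbb{R})}$: the weighted Strichartz norms $\langle N\rangle^{1/3+3\varepsilon}\|P_NF\|_{L^3_tL^6_x}$ and $\|P_NF\|_{L^6_tL^6_x}$, the lateral piece $N^{-1/6}\|P_NF\|_{W^{4/(2-\varepsilon),4/\varepsilon}_{\mathbf{e}_\ell}}$, and the directional local-smoothing piece $\langle N\rangle^{1/3+3\varepsilon}N^{1/2-\varepsilon}\|P_{N,\mathbf{e}_\ell}P_NF\|_{L^{4/\varepsilon,4/(2-\varepsilon)}_{\mathbf{e}_\ell}}$. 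Since $p$ exceeds $2$ and all these exponents, Minkowski's inequality moves $\|\cdot\|_{L^p_\omega}$ inside the $\ell^2_N$ sum and inside each space-time integral; for fixed $N$ only the unit blocks $|k|\sim N$ contribute to $P_Nf^\omega=\sum_kg_k(\omega)P_NP_kf$, so pointwise in $(t,x)$ Lemma \ref{L5.1} gives
$$
\Bigl\|\sum_kg_k(\omega)\,(e^{itH}P_NP_kf)(t,x)\Bigr\|_{L^p_\omega}\lesssim\sqrt p\,\Bigl(\sum_k|(e^{itH}P_NP_kf)(t,x)|^2\Bigr)^{1/2},
$$
and a second use of Minkowski (again legitimate as all exponents are $\geq 2$) pulls the $\ell^2_k$ sum outside each space-time norm. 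The matter is thereby reduced to deterministic single-block estimates of the form
$$
w_N(Z)\,\bigl\|e^{itH}P_kf\bigr\|_Z\lesssim\langle k\rangle^{s}\,\|P_kf\|_{L^2_x}\qquad(|k|\sim N),
$$
where $w_N(Z)$ denotes the $N$-weight attached to the component $Z$ of $\|P_NF\|_{Y_N}$; granting these, summing in $k$ over $|k|\sim N$ and then in $N$ produces $\lesssim\sqrt p\,\bigl(\sum_N\langle N\rangle^{2s}\|P_Nf\|_{L^2_x}^2\bigr)^{1/2}\simeq\sqrt p\,\|f\|_{H^s_x}$, which is exactly what is needed.

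It remains to verify these single-block bounds, and this is the heart of the argument; the hypothesis $0<\varepsilon<\tfrac13(s-\tfrac13)$, equivalently $\tfrac13+3\varepsilon<s$, is precisely what keeps the bookkeeping tight. For the two Strichartz pieces one combines Proposition \ref{P3.1} for the admissible pairs $(3,3)$, $(2,4)$, $(6,\tfrac{12}{5})$ with Bernstein (Lemma \ref{L2.1}) and Sobolev embedding, converting $L^3_tL^6_x$ and $L^6_tL^6_x$ into a controlled power of $\langle k\rangle$ times $\|P_kf\|_{L^2}$, the prefactors $\langle N\rangle^{1/3+3\varepsilon}$ and $1$ being chosen so that the net power of $\langle k\rangle\sim\langle N\rangle$ does not exceed $s$. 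For the directional local-smoothing piece one interpolates the unit-scale maximal-function bound of Lemma \ref{L5.3} with the directional smoothing estimate $\|e^{itH}P_{k,\mathbf{e}_\ell}P_kf\|_{L^{\infty,2}_{\mathbf{e}_\ell}}\lesssim\langle k\rangle^{-1/2}\|P_kf\|_{L^2}$, exactly as in the proof of Lemma \ref{L3.1}; the half-derivative gain, weighted by $\langle N\rangle^{1/3+3\varepsilon}N^{1/2-\varepsilon}$, lands at net regularity $\tfrac13+O(\varepsilon)\leq s$. For the remaining lateral $W^{4/(2-\varepsilon),4/\varepsilon}_{\mathbf{e}_\ell}$ piece one first invokes the identity \eqref{eq2.1} to decompose $P_N$ into terms each carrying a factor $P_{N,\mathbf{e}_j}$, so that the maximal-function and smoothing estimates of Lemma \ref{L3.1} and Lemma \ref{L5.3} can be interpolated against a Strichartz bound, with the $\varepsilon$-weights again arranged so that the net regularity stays $\leq s$. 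Throughout, the non-commutation of $H$ with the Fourier projections $P_k$ is absorbed by the Sobolev-norm equivalences of Lemma \ref{L2.3}, valid precisely under the assumptions \eqref{eq1.2}--\eqref{eq1.3} on $V$. The main obstacle is exactly this last circle of facts: securing the deterministic single-block space-time estimates for the \emph{perturbed} propagator $e^{itH}$ — where, unlike for $e^{it\Delta}$, Galilean invariance is unavailable — with the sharp $\langle k\rangle$-losses, which rests on the Strichartz and local-smoothing theory of \cite{JWZY2024,AICK2007} for potentials satisfying \eqref{eq1.2}--\eqref{eq1.3}. Once these are in hand, the probabilistic assembly above is routine, exactly as in \cite{BDJL2019}, and Lemma \ref{L5.2} closes the proof.
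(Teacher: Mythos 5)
Your plan matches the paper's proof essentially step for step: reduce to $L^p_\omega$-moment bounds via Lemma~\ref{L5.2} with $K=\|f\|_{H^s_x}$, pass $L^p_\omega$ inside via Minkowski, invoke the Khintchine-type bound of Lemma~\ref{L5.1} pointwise to obtain the square function over $|k|\sim N$, and close with the deterministic frequency-localized estimates from Proposition~\ref{P3.1}, Lemma~\ref{L3.1}, Lemma~\ref{L5.3}, and unit-scale Bernstein, using $\frac13+3\varepsilon\le s$ to sum in $N$. The one small divergence is cosmetic: for the $W_{\mathbf{e}_\ell}^{\frac{4}{2-\varepsilon},\frac{4}{\varepsilon}}$ component you invoke the directional decomposition \eqref{eq2.1}, but since $\frac{4}{2-\varepsilon}\le\frac{4}{\varepsilon}$ that piece falls under \eqref{eq3.4} directly (the identity \eqref{eq2.1} is only needed for the $p\ge q$ local-smoothing component $L_{\mathbf{e}_\ell}^{\frac{4}{\varepsilon},\frac{4}{2-\varepsilon}}$); the paper instead interpolates the maximal estimate \eqref{eq5.1} against a unit-block $L^4_tL^4_x$ Strichartz bound for $N\gtrsim1$ and applies \eqref{eq3.4} alone for $N\lesssim1$.
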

\begin{proof}
For any $p \geq \frac{4}{\varepsilon}$ we have by Minkowski's inequality that
$$
\left\|\left\|e^{i t H} f^\omega\right\|_{Y(\mathbb{R})}\right\|_{L_\omega^p} \leq\left(\sum_{N \in 2^\mathbb{Z}}\| \| P_N e^{i t H} f^\omega\left\|_{Y_N(\mathbb{R})}\right\|_{L_\omega^p}^2\right)^{\frac{1}{2}}.
$$
We consider the components of the $Y_N(\mathbb{R})$ norm separately. In the sequel, we will repeatedly use that the free evolution and the frequency projections commute. We first treat the component coming from the Strichartz spaces. By Proposition \ref{P3.1}, the unit-scale Bernstein estimate and Lemma \ref{L3.1}, we have
\begin{eqnarray*}
\langle N\rangle^{\frac{1}{3}+3 \varepsilon}\left\| \left\| P_N e^{i t H}f^\omega\right\|_{L_t^3 L_x^6 \cap L_t^6 L_x^6\left(\mathbb{R} \times \mathbb{R}^4\right)}\right\|_{L_\omega^p}
& \leq&\langle N\rangle^{\frac{1}{3}+3 \varepsilon}\left\| \| \sum_{|k| \sim N} g_k(\omega) e^{i t H} P_k f\|_{L_\omega^p}\right\|_{L_t^3 L_x^6 \cap L_t^6 L_x^6\left(\mathbb{R} \times \mathbb{R}^4\right)} \\
& \lesssim& \sqrt{p}\langle N\rangle^{\frac{1}{3}+3 \varepsilon}\left(\sum_{|k| \sim N}\left\|e^{i t H} P_k f\right\|_{L_t^3 L_x^6 \cap L_t^6 L_x^6\left(\mathbb{R} \times \mathbb{R}^4\right)}^2\right)^{\frac{1}{2}} \\
& \lesssim& \sqrt{p}\langle N\rangle^{\frac{1}{3}+3 \varepsilon}\left(\sum_{|k| \sim N}\left\|e^{i t H} P_k f\right\|_{L_t^3 L_x^3 \cap L_t^6 L_x^{\frac{12}{5}}\left(\mathbb{R} \times \mathbb{R}^4\right)}^2\right)^{\frac{1}{2}} \\
& \lesssim& \sqrt{p}\langle N\rangle^{\frac{1}{3}+3 \varepsilon}\left(\sum_{|k| \sim N}\left\|P_k f\right\|_{L_x^2\left(\mathbb{R}^4\right)}^2\right)^{\frac{1}{2}} \\
& \simeq& \sqrt{p}\langle N\rangle^{\frac{1}{3}+3 \varepsilon}\left\|P_N f\right\|_{L_x^2\left(\mathbb{R}^4\right)} .
\end{eqnarray*}
Next, we estimate the local smoothing type component of the $Y_N(\mathbb{R})$ component. Here we first apply the local smoothing type estimate \eqref{eq3.5} for the lateral spaces and then use the large deviation estimate from Lemma \ref{L5.3} to obtain that
\begin{eqnarray*}
&& \left\|\sum_{\ell=1}^4\langle N\rangle^{\frac{1}{3}+3 \varepsilon} N^{\frac{1}{2}-\varepsilon}\| e^{i t H} P_{N, \mathbf{e}_{\ell}} P_N f^\omega\|_{L_{\mathbf{e}_{\ell}}^{\frac{4}{\varepsilon}, \frac{4}{2-\varepsilon}}\left(\mathbb{R} \times \mathbb{R}^4\right)}\right\|_{L_\omega^p} \\
&  \lesssim&\left\|\langle N\rangle^{\frac{1}{3}+3 \varepsilon}\| P_N f^\omega \|_{L_x^2\left(\mathbb{R}^4\right)}\right\|_{L_\omega^p} \\
& \lesssim& \sqrt{p}\langle N\rangle^{\frac{1}{3}+3 \varepsilon}\left(\sum_{|k| \sim N}\left\|P_k f\right\|_{L_x^2\left(\mathbb{R}^4\right)}^2\right)^{\frac{1}{2}} \\
& \lesssim& \sqrt{p}\langle N\rangle^{\frac{1}{3}+3 \varepsilon}\left\|P_N f\right\|_{L_x^2\left(\mathbb{R}^4\right)}.
\end{eqnarray*}
Finally, we turn to the maximal function type component of the $Y_N(\mathbb{R})$ norm, where we distinguish the large frequency regime $N \gtrsim 1$ and the small frequency regime $N \lesssim 1$. For large frequencies $N \gtrsim 1$ we first use the large deviation estimate from Lemma \ref{L5.1} and then interpolate between the improved maximal function estimate \eqref{eq5.1} for unit-scale frequency localized data and an estimate of the $L_t^4 L_x^4\left(\mathbb{R} \times \mathbb{R}^4\right)$ norm of the free evolution of unit-scale frequency localized data (based on the unit-scale Bernstein estimate and Strichartz estimates) to conclude that
\begin{eqnarray*}
\left\|\sum_{\ell=1}^4\langle N\rangle^{-\frac{1}{6}} \| e^{i t H}  P_N f^\omega\|_{L_{\mathbf{e}_{\ell}}^{\frac{4}{2-\varepsilon}, \frac{4}{\varepsilon}}\left(\mathbb{R} \times \mathbb{R}^4\right)}\right\|_{L_\omega^p}& \lesssim& \sqrt{p}\sum_{\ell=1}^4\langle N\rangle^{-\frac{1}{6}}\left(\sum_{|k| \sim N}\left\|e^{i t H} P_k f\right\|_{L_{\mathbf{e}_{\ell}}^{\frac{4}{2-\varepsilon}, \frac{4}{\varepsilon}}\left(\mathbb{R} \times \mathbb{R}^4\right)}^2\right)^{\frac{1}{2}}\\
& \lesssim& \sqrt{p} N^{-\frac{1}{6}} N^{\frac{1}{2}}\left(\sum_{|k| \sim N}\left\|P_k f\right\|_{L_x^2\left(\mathbb{R}^4\right)}^2\right)^{\frac{1}{2}} \\
& \lesssim& \sqrt{p}\langle N\rangle^{\frac{1}{3}}\left\|P_N f\right\|_{L_x^2\left(\mathbb{R}^4\right)} .
\end{eqnarray*}
For small frequencies $N \lesssim 1$, we directly apply \eqref{eq3.4}, trivially bounding the resulting frequency factors, and then use the large deviation estimate to infer that in this case
\begin{eqnarray*}
 \left\|\sum_{\ell=1}^4\langle N\rangle^{-\frac{1}{6}} \| e^{i t H}  P_N f^\omega\|_{L_{\mathbf{e}_{\ell}}^{\frac{4}{2-\varepsilon}, \frac{4}{\varepsilon}}\left(\mathbb{R} \times \mathbb{R}^4\right)}\right\|_{L_\omega^p} &\lesssim& N^{-\frac{1}{6}} N^{\frac{3}{2}-\varepsilon}\left\| \|  P_N f^\omega \|_{L_x^2\left(\mathbb{R}^4\right)}\right\|_{L_\omega^p} \\
& \lesssim& \sqrt{p}\left\|P_N f\right\|_{L_x^2\left(\mathbb{R}^4\right)} .
\end{eqnarray*}
Putting all of the above estimates together, we find that
$$
\left\|\left\|e^{i t H} f^\omega\right\|_{Y(\mathbb{R})}\right\|_{L_\omega^p} \lesssim \sqrt{p}\left(\sum_{N \in 2^\mathbb{Z}}\left(\langle N\rangle^{\frac{1}{3}+3 \varepsilon}\left\|P_N f\right\|_{L_x^2\left(\mathbb{R}^4\right)}\right)^2\right)^{\frac{1}{2}} \lesssim \sqrt{p}\|f\|_{H_x^s\left(\mathbb{R}^4\right)},
$$
from which the assertion follows by Lemma \ref{L5.2}.
\end{proof}
\section{The proof of Theorem \ref{t1.1}}
In this section we establish local well-posedness for the forced cubic NLS
\begin{equation}\label{eq6.1}
  \left\{\begin{aligned}
\left(i \partial_t+\Delta-V\right) v & = \pm|F+v|^2(F+v) \\
v\left(t_0\right) & =v_0 \in H_x^1(\mathbb{R}^4)
\end{aligned}\right.
\end{equation}
for forcing terms $F: \mathbb{R} \times \mathbb{R}^4 \rightarrow \mathbb{C}$ satisfying $\|F\|_{Y(\mathbb{R})}<\infty$. Recall that by Lemma \ref{L5.4} we have $\left\|e^{i t H} f^\omega\right\|_{Y(\mathbb{R})}<\infty$ almost surely for any $f \in H_x^s\left(\mathbb{R}^4\right)$ with $\frac{1}{3}<s<1$. The proof of the almost sure local well-posedness result of Theorem \ref{t1.1} is then an immediate consequence of the following local well-posedness result for \eqref{eq6.1}.

\begin{proposition}\label{P6.1} Let $t_0 \in \mathbb{R}$ and let $I$ be an open time interval containing $t_0$. Let $F \in$ $Y(\mathbb{R})$ and let $v_0 \in H_x^1(\mathbb{R}^4)$. There exists $0<\delta \ll 1$ such that if
\begin{equation}\label{eq6.2}
  \left\|e^{i\left(t-t_0\right) H} v_0\right\|_{X(I)}+\|F\|_{Y(I)} \leq \delta,
\end{equation}
then there exists a unique solution
$$
v \in C(I ; H_x^1(\mathbb{R}^4)) \cap X(I)
$$
to \eqref{eq6.1} on $I \times \mathbb{R}^4$. Moreover, the solution extends to a unique solution $v: I_* \times \mathbb{R}^4 \rightarrow \mathbb{C}$ to the Cauchy problem \eqref{eq6.1} with maximal time interval of existence $I_*(t_0\in I_*)$, and we have the finite time blowup criterion
$$
\sup I_*<\infty \quad \Rightarrow \quad\|v\|_{X([t_0, \sup I_*))}=+\infty
$$
with an analogous statement in the negative time direction. Finally, a global solution $v(t)$ to \eqref{eq6.1} satisfying $\|v\|_{X(\mathbb{R})}<\infty$ scatters as $t \rightarrow \pm \infty$ in the sense that there exist states $v^{ \pm} \in H_x^1(\mathbb{R}^4)$ such that
$$
\lim _{t \rightarrow \pm \infty}\left\|v(t)-e^{i t \Delta} v^{ \pm}\right\|_{H_x^1\left(\mathbb{R}^4\right)}=0.
$$
\end{proposition}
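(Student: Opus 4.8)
The plan is the contraction mapping principle in the Duhamel formulation, using the linear estimate of Lemma~\ref{L3.4} and the trilinear consequences \eqref{eq4.13}--\eqref{eq4.14}, followed by a continuation argument for the maximal interval and a Cauchy-sequence argument for scattering; since the trilinear bounds of Proposition~\ref{P4.1} are insensitive to the sign in front of the nonlinearity we ignore the $\pm$, and without loss of generality we take $t_0 = 0$. Define
\[
\Phi(v)(t) := e^{itH}v_0 - i\int_0^t e^{i(t-s)H}\,|F+v|^2(F+v)(s)\,ds ,
\]
so that $w := \Phi(v) - e^{itH}v_0$ solves $(i\partial_t + \Delta - V)w = -|F+v|^2(F+v)$ with $w(0) = 0$. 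By Lemma~\ref{L3.4} and \eqref{eq4.13},
\[
\|w\|_{L_t^\infty H_x^1(I)} + \|w\|_{X(I)} \lesssim \big\||F+v|^2(F+v)\big\|_{G(I)} \lesssim \|F\|_{Y(I)}^3 + \|v\|_{X(I)}^3 .
\]
On the closed (hence complete) ball $B := \{v \in X(I) : \|v\|_{X(I)} \le 2\delta\}$, using $\|e^{itH}v_0\|_{X(I)} \le \delta$ from \eqref{eq6.2}, this gives $\|\Phi(v)\|_{X(I)} \le \delta + C\delta^3 \le 2\delta$ for $\delta$ small, so $\Phi(B) \subset B$; applying Lemma~\ref{L3.4} to the difference and \eqref{eq4.14},
\[
\|\Phi(v_1) - \Phi(v_2)\|_{X(I)} \lesssim \|v_1 - v_2\|_{X(I)}\big(\|F\|_{Y(I)}^2 + \|v_1\|_{X(I)}^2 + \|v_2\|_{X(I)}^3\big) \lesssim \delta^2 \|v_1 - v_2\|_{X(I)},
\]
a contraction for $\delta$ small. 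The unique fixed point $v$ solves \eqref{eq6.1}; writing $v = e^{itH}v_0 + w$ and using $v_0 \in H_x^1$ gives $v \in C(I;H_x^1)\cap X(I)$, time continuity of $w$ coming from its Duhamel representation against the $L_t^1L_x^2$-plus-lateral forcing of $G(I)$. Uniqueness in the full class $C(I;H_x^1)\cap X(I)$, without smallness, follows from a connectedness argument: the agreement set of two solutions is closed and, by the contraction estimate on short subintervals on which $\|v_i\|_{X}$ and $\|F\|_{Y}$ are small (possible by the time-divisibility in Proposition~\ref{P3.2}, since $F \in Y(\mathbb{R})$), open.

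For the maximal interval, let $I_*$ be the union of all open intervals containing $0$ carrying a local solution in the class $C\cap X$; by uniqueness these glue to a solution on $I_*$, which is maximal. Suppose $T_* := \sup I_* < \infty$ but $\|v\|_{X([0,T_*))} < \infty$. Then Lemma~\ref{L3.4} and \eqref{eq4.13} force $\|v\|_{L_t^\infty H_x^1([0,T_*))} < \infty$. Since every component of the $X(I)$- and $Y(I)$-norms carries only finite time-exponents, finiteness of $\|v\|_{X([0,T_*))}$ and of $\|F\|_{Y(\mathbb{R})}$ forces, by dominated convergence, $\|v\|_{X([t,T_*))} + \|F\|_{Y([t,T_*))} \to 0$ as $t \to T_*^-$. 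Fix $t_1 < T_*$ for which this quantity is tiny; the Duhamel identity on $[t_1,T_*)$ together with Lemma~\ref{L3.4} then make $\|e^{i(t-t_1)H}v(t_1)\|_{X([t_1,T_*))}$ tiny too, and since $v(t_1)\in H_x^1$ gives $e^{i(t-t_1)H}v(t_1)\in X(\mathbb{R})$ (Lemma~\ref{L3.4} with zero forcing), the endpoint-continuity of Proposition~\ref{P3.2}(i) yields $\eta > 0$ with $\|e^{i(t-t_1)H}v(t_1)\|_{X([t_1,T_*+\eta))} + \|F\|_{Y([t_1,T_*+\eta))} \le \delta$. The local theory with initial time $t_1$ then produces a solution on $[t_1,T_*+\eta)$, which by uniqueness extends $v$ beyond $T_*$ --- contradicting maximality. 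Hence $\sup I_* < \infty$ implies $\|v\|_{X([0,\sup I_*))} = +\infty$, and symmetrically for $\inf I_*$.

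For scattering, suppose $v$ is global with $\|v\|_{X(\mathbb{R})} < \infty$. From $e^{-itH}v(t) = v_0 - i\int_0^t e^{-isH}|F+v|^2(F+v)(s)\,ds$ and the dual Strichartz and lateral estimates behind Lemma~\ref{L3.4},
\[
\Big\|\int_{t_1}^{t_2} e^{-isH}|F+v|^2(F+v)(s)\,ds\Big\|_{H_x^1} \lesssim \big\||F+v|^2(F+v)\big\|_{G([t_1,t_2])} \lesssim \|F\|_{Y([t_1,t_2])}^3 + \|v\|_{X([t_1,t_2])}^3 \longrightarrow 0
\]
as $t_1,t_2 \to +\infty$, so $e^{-itH}v(t)$ is Cauchy in $H_x^1$; calling its limit $v^{+}$, the tail estimate on $[t,\infty)$ gives $\|v(t) - e^{itH}v^{+}\|_{H_x^1} = \|e^{-itH}v(t) - v^{+}\|_{H_x^1} \to 0$, where we used that $e^{itH}$ is an $L_x^2$-isometry commuting with $(1+\mathcal{L})^{1/2}$ and hence bounded on $H_x^1$ by Lemma~\ref{L2.3}. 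The case $t \to -\infty$ is identical, and if $V$ additionally admits a wave operator for $H$ relative to $\Delta$ one further scatters to $e^{it\Delta}$. The contraction and the scattering step are routine given \eqref{eq4.13}--\eqref{eq4.14} and Lemma~\ref{L3.4}; the delicate point is the continuation argument, where finiteness of $\|v\|_{X([0,T_*))}$ must be upgraded to genuine smallness of the $X$- and $Y$-norms near $T_*$ --- this rests on the absolute continuity in time of all norm components (finite time-exponents plus dominated convergence) combined with Proposition~\ref{P3.2}(i), which together let the local theory be restarted at a time just below $T_*$ and pushed slightly past it.
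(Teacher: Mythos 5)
Your argument is correct and follows essentially the same route as the paper's: a contraction mapping in an $X(I)$-ball using Lemma~\ref{L3.4} together with the nonlinear bounds \eqref{eq4.13}--\eqref{eq4.14}, the same Duhamel-restart argument via Proposition~\ref{P3.2}(i) for the blowup alternative, and a Cauchy-in-$H_x^1$ tail estimate for scattering. You additionally supply a connectedness argument for uniqueness in the full class $C(I;H_x^1)\cap X(I)$ (which the paper leaves implicit), and you correctly flag that the scattering asymptotic should read $e^{itH}v^{\pm}$, consistent with the paper's own proof, rather than the $e^{it\Delta}v^{\pm}$ written in the statement.
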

\begin{proof}
\textbf{Existence of solution.} Without loss of generality we may assume that $t_0=0$. Let $I(0\in I)$ be an open time interval for which \eqref{eq6.2} holds. Note that the existence of such an interval follows from Proposition \ref{P3.2}(i) and the assumption that $\|F\|_{Y(\mathbb{R})}<\infty$. We construct the desired local solution via a standard contraction mapping argument. Let $\delta>0$ be an absolute constant whose size will be chosen sufficiently small further below. We define the ball
$$
\mathcal{B}:=\left\{v \in X(I):\|(1+H)^{\frac{1}{2}}v\|_{X(I)} \leq 2 \delta\right\}
$$
and the map
$$
\Phi(v)(t):=e^{i t H} v_0 - i \int_0^t e^{i(t-s) H}|F+v|^2(F+v)(s) d s.
$$
From our main linear estimate \eqref{eq3.16} and the nonlinear estimates \eqref{eq4.1}-\eqref{eq4.2}, upon choosing $\delta:=(18 C)^{-\frac{1}{2}}$, we obtain for any $v, v_1, v_2 \in \mathcal{B}$ that
$$
\begin{aligned}
\|(1+H)^{\frac{1}{2}}\Phi(v)\|_{X(I)} & \leq\left\|(1+H)^{\frac{1}{2}}e^{i t H} v_0\right\|_{X(I)}+\left\|\int_0^t (1+H)^{\frac{1}{2}}e^{i(t-s) H}|F+v|^2(F+v)(s) d s\right\|_{X(I)} \\
& \leq\left\|e^{i t \Delta} v_0\right\|_{X(I)}+C\left\|(1+H)^{\frac{1}{2}}|F+v|^2(F+v)\right\|_{G(I)} \\
& \leq\left\|e^{i t \Delta} v_0\right\|_{X(T)}+C\left(\|v\|_{X(I)}^3+\|F\|_{Y(I)}^3\right) \\
& \leq 2 \delta
\end{aligned}
$$
and
$$
\begin{aligned}
\left\|\Phi\left(v_1\right)-\Phi\left(v_2\right)\right\|_{X(I)} & \leq C\left\|\left|F+v_1\right|^2\left(F+v_1\right)-\left|F+v_2\right|^2\left(F+v_2\right)\right\|_{G(I)} \\
& \leq C\left\|v_1-v_2\right\|_{X(I)}\left(\left\|v_1\right\|_{X(I)}^2+\left\|v_2\right\|_{X(I)}^2+\|F\|_{Y(I)}^2\right) \\
& \leq \frac{1}{2}\left\|v_1-v_2\right\|_{X(I)}.
\end{aligned}
$$
It follows that the map $\Phi: \mathcal{B} \rightarrow \mathcal{B}$ is a contraction with respect to the $X(I)$ norm and we infer the existence of a unique solution $v \in C(I ; H_x^1(\mathbb{R}^4)) \cap X(I)$ to \eqref{eq6.1}. By iterating this local well-posedness argument we conclude that the solution extends to a unique solution $v: I_* \times \mathbb{R}^4 \rightarrow \mathbb{C}$ to \eqref{eq6.1} with maximal time interval of existence $I_*$.

\textbf{Finite time blowup.} Assume $T_{+}:=\sup I_*<\infty$ and $\|v\|_{X\left(\left[0, T_{+}\right)\right)}<\infty$. We want to find a time $0<t_1<T_{+}$ such that
\begin{equation}\label{eq6.3}
  \left\|e^{i\left(t-t_1\right) H} v\left(t_1\right)\right\|_{X\left(\left[t_1, T_{+}\right)\right)}+\|F\|_{Y\left(\left[t_1, T_{+}\right)\right)} \leq \frac{\delta}{2}.
\end{equation}
Using $\|F\|_{Y(\mathbb{R})}<\infty$ and $\left\|e^{i\left(t-t_1\right) H} v\left(t_1\right)\right\|_{X\left(\left[t_1, \infty\right)\right)}<\infty$ and Proposition \ref{P3.2}, there exists $\eta>0$ such that
$$
\left\|e^{i\left(t-t_1\right) H} v\left(t_1\right)\right\|_{X\left(\left[t_1, T_{+}+\eta\right)\right)}+\|F\|_{Y\left(\left[t_1, T_{+}+\eta\right)\right)} \leq \delta.
$$
However, the solution $v(t)$ extends beyond time $T_{+}=\sup I_*$ by the local well-posedness result, which is a contradiction. Now, we prove \eqref{eq6.3}. Note that
$$
e^{i\left(t-t_1\right) H} v\left(t_1\right)=v(t) \pm i \int_{t_1}^t e^{i(t-s) H}|F+v|^2(F+v)(s) d s.
$$
Then it follows from \eqref{eq3.16} and \eqref{eq4.13} that
$$
\begin{aligned}
\left\|e^{i\left(t-t_1\right) \Delta} v\left(t_1\right)\right\|_{X\left(\left[t_1, T_{+}\right)\right.} & \leq\|v\|_{X\left(\left[t_1, T_{+}\right)\right)}+\left\|\int_{t_1}^t e^{i(t-s) \Delta}|F+v|^2(F+v)(s) d s\right\|_{X\left(\left[t_1, T_{+}\right)\right)} \\
& \leq\|v\|_{X\left(\left[t_1, T_{+}\right)\right)}+C\left(\|F\|_{Y\left(\left[t_1, T_{+}\right)\right)}^3+\|v\|_{X\left(\left[t_1, T_{+}\right)\right)}^3\right).
\end{aligned}
$$
According to the Proposition \ref{P3.2}(i) and the assumptions $\|v\|_{X\left(\left[0, T_{+}\right)\right)}<\infty$, $\|F\|_{Y(\mathbb{R})}<\infty$, we can obtain that $\|v\|_{X\left(\left[t_1, T_{+}\right)\right)} \rightarrow 0$ and $\|F\|_{Y\left(\left[t_1, T_{+}\right)\right)} \rightarrow 0$ as $t_1 \nearrow T_{+}$, which implies \eqref{eq6.3}.

\textbf{Scattering.} We prove the scattering statement for a global solution $v(t)$ to \eqref{eq6.1} satisfying $\|v\|_{X(\mathbb{R})}<\infty$. By similar arguments as above we infer that the scattering state in the positive time direction
$$
v^{+}:=v_0+i \int_0^{\infty} e^{-i s H}|F+v|^2(F+v)(s) d s
$$
belongs to $H_x^1\left(\mathbb{R}^4\right)$ and satisfies $\left\|v(t)-e^{i t H} v^{+}\right\|_{H_x^1\left(\mathbb{R}^4\right)} \rightarrow 0$ as $t \rightarrow \infty$. An analogous argument holds for the negative time direction.
\end{proof}

\begin{proof}[\bf Proof of Theorem \ref{t1.1}]
We seek a solution to \eqref{eq1.3} of the form
$$
u(t)=e^{i t H} f^\omega+v(t).
$$
To this end the nonlinear component $v(t)$ must be a solution to the following equation
\begin{equation}\label{eq6.4}
  \left(i \partial_t-H\right) v+ \left|e^{i t H} f^\omega+v\right|^2\left(e^{i t H} f^\omega+v\right)=0
\end{equation}
with zero initial data $v(0)=0$. By Lemma \ref{L5.4}, we have $\left\|e^{i t \Delta} f^\omega\right\|_{Y(\mathbb{R})}<\infty$ for almost every $\omega \in \Omega$. Hence, there exists an interval $I^\omega$ such that $\left\|e^{i t \Delta} f^\omega\right\|_{Y\left(I^\omega\right)} \leq \delta$ for almost every $\omega \in \Omega$ by Proposition \ref{P3.2}(i), where $0<\delta \ll 1$ is the small absolute constant from Proposition \ref{P6.1}. Therefore, there exists a unique solution $v \in C(I^\omega ; H_x^1(\mathbb{R}^4)) \cap X(I^\omega)$ to \eqref{eq6.4} for almost every $\omega \in \Omega$ via Proposition \ref{P6.1}.
\end{proof}

\section{Perturbation theory}
In this subsection, we establish perturbation theory to compare solutions to the forced nonlinear Schr\"odinger equation
\begin{equation}\label{eq7.1}
  \left\{\begin{aligned}
&(i \partial_t-H) v +|F+v|^2(F+v)=0, \\
&v(t_0)  =v_0 \in H_x^1(\mathbb{R}^4)
\end{aligned}\right.
\end{equation}
with solutions to the ``usual'' nonlinear Schr\"odinger equation
\begin{equation}\label{eq7.2}
  \left\{\begin{aligned}
&(i \partial_t-H) u  +|u|^2 u=0, \\
&u(t_0)   =u_0 \in H_x^1(\mathbb{R}^4) .
\end{aligned}\right.
\end{equation}
\begin{theorem}(Short-time perturbations)\label{t7.1}
Let $I \subset \mathbb{R}$ be a time interval with $t_0 \in I$ and let $v_0, u_0 \in H_x^1\left(\mathbb{R}^4\right)$. There exist small absolute constants $0<\delta \ll 1$ and $0<\eta_0 \ll 1$ with the following properties. Let $u: I \times \mathbb{R}^4 \rightarrow \mathbb{C}$ be the solution to \eqref{eq7.2} with initial data $u\left(t_0\right)=u_0$ satisfying
$$
\|u\|_{X(I)} \leq \delta
$$
and let $F: I \times \mathbb{R}^4 \rightarrow \mathbb{C}$ be a forcing term such that
$$
\|F\|_{Y(I)} \leq \eta
$$
for some $0<\eta \leq \eta_0$. Suppose also that
$$
\left\|v_0-u_0\right\|_{H_x^1\left(\mathbb{R}^4\right)} \leq \eta_0.
$$
Then there exists a unique solution $v: I \times \mathbb{R}^4 \rightarrow \mathbb{C}$ to \eqref{eq7.1} with initial data $v\left(t_0\right)=v_0$ and we have
\begin{equation}\label{eq7.3}
  \|v-u\|_{L_t^{\infty} H_x^1\left(I \times \mathbb{R}^4\right)}+\|v-u\|_{X(I)} \leq C_0\left(\left\|v_0-u_0\right\|_{H_x^1\left(\mathbb{R}^4\right)}+\eta\right)
\end{equation}
for some absolute constant $C_0 \geq 1$.
\end{theorem}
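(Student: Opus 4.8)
The plan is to solve directly for the difference $w:=v-u$ by a contraction mapping argument on a small ball, which produces both the solution $v$ on all of $I$ and the bound \eqref{eq7.3} at once. Subtracting the Duhamel formulation of \eqref{eq7.2} from that of \eqref{eq7.1} (using that $u$ solves \eqref{eq7.2}) shows that $w$ must be a fixed point of
\begin{equation*}
  \Psi(w)(t):=e^{i(t-t_0)H}(v_0-u_0)-i\int_{t_0}^t e^{i(t-s)H}\big(|F+u+w|^2(F+u+w)-|u|^2u\big)(s)\,ds,
\end{equation*}
with $w(t_0)=v_0-u_0$. I would run the argument on the ball
\begin{equation*}
  \mathcal{B}:=\big\{w\in X(I):\ \|w\|_{X(I)}\le C_0\big(\|v_0-u_0\|_{H_x^1(\mathbb{R}^4)}+\eta\big)\big\},
\end{equation*}
working throughout with the $(1+H)^{\frac12}$-weighted $X(I)$ norm exactly as in the proof of Proposition \ref{P6.1} and using Lemma \ref{L2.3} to exchange that weight for $\langle\nabla\rangle$; the absolute constant $C_0\ge1$ will be fixed below. (That $v$ exists on all of $I$ also follows from Proposition \ref{P6.1}: since $u$ solves \eqref{eq7.2} with $\|u\|_{X(I)}\le\delta$, estimates \eqref{eq3.16} and \eqref{eq4.13} give $\|e^{i(t-t_0)H}u_0\|_{X(I)}\lesssim\delta$, hence $\|e^{i(t-t_0)H}v_0\|_{X(I)}+\|F\|_{Y(I)}\lesssim\delta+\eta_0$ satisfies the smallness condition \eqref{eq6.2} once $\delta,\eta_0$ are small, and the blow-up criterion rules out a maximal interval properly contained in $I$.)

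For the self-mapping property I would apply the linear estimate \eqref{eq3.16} of Lemma \ref{L3.4} followed by the trilinear bound \eqref{eq4.15}, just as in the proof of Proposition \ref{P6.1}. Since $\|u\|_{X(I)}\le\delta$, $\|F\|_{Y(I)}\le\eta\le\eta_0$, and $\|w\|_{X(I)}\le C_0(\|v_0-u_0\|_{H_x^1}+\eta)$ on $\mathcal{B}$, every term on the right of \eqref{eq4.15} without a factor of $w$ is $\lesssim\eta$ (indeed $\eta^3,\ \eta\delta^2\le\eta$) and every term with a factor of $w$ carries an extra factor $\delta$, $\eta$, or the radius of $\mathcal{B}$, all $\ll1$; hence
\begin{equation*}
  \|\Psi(w)\|_{X(I)}\le C_1\big(\|v_0-u_0\|_{H_x^1}+\eta\big)+C_1\big(\delta+\eta_0+C_0\eta_0\big)\|w\|_{X(I)}
\end{equation*}
with $C_1$ the fixed constant coming from \eqref{eq3.16}--\eqref{eq4.15}. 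Setting $C_0:=2C_1$ and then choosing $\delta,\eta_0$ small enough that $C_1(\delta+\eta_0+C_0\eta_0)\le\frac12$ gives $\Psi(\mathcal{B})\subseteq\mathcal{B}$. For the contraction I would use \eqref{eq3.16} together with the difference estimate \eqref{eq4.14} applied with ``$v_i$'' replaced by $u+w_i$ (legitimate since $\|u+w_i\|_{X(I)}\le\delta+C_0\eta_0\ll1$ and $\|F\|_{Y(I)}\le\eta_0$), obtaining $\|\Psi(w_1)-\Psi(w_2)\|_{X(I)}\le\frac12\|w_1-w_2\|_{X(I)}$ after a further shrinking of $\delta,\eta_0$. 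The Banach fixed point theorem then yields a unique $w\in\mathcal{B}$; $v:=u+w$ lies in $C(I;H_x^1(\mathbb{R}^4))\cap X(I)$ by the Duhamel identity and Proposition \ref{P3.2}, solves \eqref{eq7.1} with $v(t_0)=v_0$, and applying \eqref{eq3.16} once more to $w=\Psi(w)$ recovers $\|w\|_{L_t^\infty H_x^1}+\|w\|_{X(I)}\lesssim\|v_0-u_0\|_{H_x^1}+\eta$, which is \eqref{eq7.3}. Uniqueness in the full class $C(I;H_x^1)\cap X(I)$ follows by a standard continuity argument: by Proposition \ref{P3.2}(i) the $X$-norm of $v-u$ over $[t_0,t_0+\tau]$ vanishes as $\tau\to0^+$, so on short subintervals any two solutions land in $\mathcal{B}$ and coincide, and one iterates across $I$ (with the analogous argument to the left of $t_0$).

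I expect the only genuinely delicate point---and the main obstacle---to be the requirement that $\delta$ and $\eta_0$ be chosen \emph{uniformly}, i.e. depending only on the structural constants of Lemmas \ref{L3.4}, \ref{L2.3} and Proposition \ref{P4.1}, and in particular independently of the (possibly long) interval $I$ and of the specific data $u_0,v_0,F$. This is arranged by fixing $C_0=2C_1$ from the self-mapping inequality \emph{first} and only afterwards shrinking $\delta,\eta_0$ to win both the self-mapping and the contraction; it works because of the structural feature, visible in \eqref{eq4.14}--\eqref{eq4.15}, that every interaction term involving the unknown also involves a small factor ($\|u\|_{X(I)}$, $\|F\|_{Y(I)}$, or the ball radius)---the very reason the hypotheses demand that $u$ and $F$ be small. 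A secondary, purely bookkeeping, issue is the low-frequency behaviour: since $X(I)$ is $\dot H_x^1$-scaled, one carries the $(1+H)^{\frac12}$ weight throughout and reads off the $L_t^\infty H_x^1$ bound from \eqref{eq3.16} at the end, exactly as in the proof of Proposition \ref{P6.1}.
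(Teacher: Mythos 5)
Your proposal is correct and tracks the paper's proof in all essentials: both pass to the difference $w=v-u$, observe that it solves the forced difference equation, and apply the linear estimate \eqref{eq3.16} together with the nonlinear bound \eqref{eq4.15} to obtain
\[
\|w\|_{L_t^\infty H_x^1}+\|w\|_{X(I)} \lesssim \|v_0-u_0\|_{H_x^1}+\eta^3+\|w\|_{X(I)}^3+\delta^2\eta+\delta^2\|w\|_{X(I)},
\]
after which one closes by smallness. The paper compresses the closing step into the phrase ``a standard continuity argument,'' whereas you instead run an explicit Banach fixed point on the ball of radius $C_0(\|v_0-u_0\|_{H_x^1}+\eta)$ (using \eqref{eq4.14} for the contraction); these are the two standard, interchangeable ways of absorbing the cubic and $\delta^2$-weighted terms, and your ordering of constants (fix $C_0=2C_1$ first, then shrink $\delta,\eta_0$) is exactly what makes either version close uniformly in $I$. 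The only slight looseness is in your displayed self-mapping bound, where the coefficient of $\|w\|_{X(I)}$ really comes out as $C_1(\delta^2+(\text{ball radius})^2)$ rather than $C_1(\delta+\eta_0+C_0\eta_0)$; since $\delta,\eta_0<1$ this is harmless and does not affect the conclusion.
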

\begin{proof}
Define $w:=v-u$ and observe that $w$ is a solution to the difference equation
$$
\left\{\begin{aligned}
&\left(i \partial_t-H\right) w +|F+u+w|^2(F+u+w)-|u|^2 u=0 \text { on } I \times \mathbb{R}^4, \\
&w\left(t_0\right)   =v_0-u_0 .
\end{aligned}\right.
$$
By the linear estimate \eqref{eq3.16} and the nonlinear estimate \eqref{eq4.15}, we find that
\begin{eqnarray*}
&& \|w\|_{L_t^{\infty} H_x^1\left(I \times \mathbb{R}^4\right)}+\|w\|_{X(I)} \\
&\lesssim&\left\|v_0-u_0\right\|_{H_x^1\left(\mathbb{R}^4\right)}+\|F\|_{Y(I)}^3+\|w\|_{X(I)}^3+\|u\|_{X(I)}^2\|F\|_{Y(I)}+\|u\|_{X(I)}^2\|w\|_{X(I)} \\
& \lesssim&\left\|v_0-u_0\right\|_{H_x^1\left(\mathbb{R}^4\right)}+\eta^3+\|w\|_{X(I)}^3+\delta^2 \eta+\delta^2\|w\|_{X(I)} .
\end{eqnarray*}
The assertion now follows from a standard continuity argument.
\end{proof}
\begin{theorem}(Long-time perturbations)\label{t7.2}
Let $I \subset \mathbb{R}$ be a time interval with $t_0 \in I$ and let $v_0 \in H_x^1(\mathbb{R}^4)$. Let $u: I \times \mathbb{R}^4 \rightarrow \mathbb{C}$ be the solution to \eqref{eq7.2} with initial data $u(t_0)=v_0$ satisfying
$$
\|u\|_{X(I)} \leq K.
$$
Then there exists $0<\eta_1(K) \ll 1$ such that for any forcing term $F: I \times \mathbb{R}^4 \rightarrow \mathbb{C}$ satisfying
$$
\|F\|_{Y(I)} \leq \eta
$$
for some $0<\eta \leq \eta_1(K)$, there exists a unique solution $v: I \times \mathbb{R}^4 \rightarrow \mathbb{C}$ to \eqref{eq7.1} with initial data $v(t_0)=v_0$ and it holds that
$$
\|v-u\|_{L_t^{\infty} H_x^1(I \times \mathbb{R}^4)}+\|v-u\|_{X(I)} \lesssim e^{C_1 K^{\frac{4}{\varepsilon}}} \eta
$$
for some absolute constant $C_1 \gg 1$. In particular, it holds that
$$
\|v-u\|_{L_t^{\infty} H_x^1\left(I \times \mathbb{R}^4\right)}+\|v-u\|_{X(I)} \lesssim 1.
$$
\end{theorem}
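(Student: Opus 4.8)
The plan is to run the short-time perturbation result Theorem \ref{t7.1} iteratively on a controlled number of subintervals, tracking how the error accumulates. First, since $\|u\|_{X(I)}\le K$, the continuity property of the $X$-norm in Proposition \ref{P3.2}(i) (together with $\|u\|_{X(\{t\})}=0$) lets us greedily partition $I$ into finitely many consecutive subintervals $I_1,\dots,I_J$ with disjoint interiors such that $\|u\|_{X(I_j)}\le\delta$ for each $j$, where $0<\delta\ll1$ is the absolute constant from Theorem \ref{t7.1}. By the $\ell^{4/\varepsilon}_j$-divisibility of the $X$-norm in Proposition \ref{P3.2}(ii) we have $J\,\delta^{4/\varepsilon}\le\sum_{j=1}^J\|u\|_{X(I_j)}^{4/\varepsilon}\le\|u\|_{X(I)}^{4/\varepsilon}\le K^{4/\varepsilon}$, hence $J\lesssim (K/\delta)^{4/\varepsilon}\sim K^{4/\varepsilon}$. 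Without loss of generality let $t_0=\inf I_1$ and set $t_j:=\sup I_j=\inf I_{j+1}$.

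Next I would set up an induction on $j$. Assume a solution $v$ to \eqref{eq7.1} has already been constructed on $I_1\cup\dots\cup I_{j-1}$ with
$$
a_{j-1}:=\|v(t_{j-1})-u(t_{j-1})\|_{H_x^1(\mathbb{R}^4)}\le\eta_0 ,
$$
which holds vacuously for $j=1$ since $a_0=0$. On $I_j$ the function $u$ solves \eqref{eq7.2} with $\|u\|_{X(I_j)}\le\delta$, the forcing satisfies $\|F\|_{Y(I_j)}\le\|F\|_{Y(I)}\le\eta\le\eta_0$ (we require $\eta_1(K)\le\eta_0$), and the data difference is $a_{j-1}\le\eta_0$; hence Theorem \ref{t7.1} applies on $I_j$ and yields a unique solution $v$ on $I_j$, which glues with the previous piece, satisfying
$$
\|v-u\|_{L_t^\infty H_x^1(I_j\times\mathbb{R}^4)}+\|v-u\|_{X(I_j)}\le C_0\,(a_{j-1}+\eta).
$$
In particular $a_j\le C_0(a_{j-1}+\eta)$, so iterating gives $a_j\le (C_0+C_0^2+\dots+C_0^j)\eta\le 2C_0^{j}\eta\le 2C_0^{J}\eta$. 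To keep the induction hypothesis $a_j\le\eta_0$ alive for every $j\le J$ we impose
$$
\eta_1(K):=\tfrac12\,\eta_0\,C_0^{-J}\asymp \eta_0\,e^{-C_1 K^{4/\varepsilon}}
$$
for a suitable absolute constant $C_1\gg1$ (using $J\lesssim K^{4/\varepsilon}$ and $C_0\ge1$); then $\eta\le\eta_1(K)$ forces $a_j\le\eta_0$ throughout, so the induction closes and $v$ is defined on all of $I$.

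Finally I would assemble the global bound. Since each component of the $X_N$-norm, and hence $\|P_N\,\cdot\,\|_{X_N}$ and then (by Minkowski in the dyadic sum) $\|\,\cdot\,\|_{X}$, is subadditive over the partition $\{I_j\}$, and likewise for $\|\,\cdot\,\|_{L_t^\infty H_x^1}$, we get
$$
\|v-u\|_{L_t^\infty H_x^1(I\times\mathbb{R}^4)}+\|v-u\|_{X(I)}\le \sum_{j=1}^J\bigl(\|v-u\|_{L_t^\infty H_x^1(I_j\times\mathbb{R}^4)}+\|v-u\|_{X(I_j)}\bigr)\le J\cdot 2C_0^{J}\eta .
$$
Absorbing the polynomial factor $J\lesssim K^{4/\varepsilon}$ into the exponential at the cost of enlarging $C_1$ gives $\|v-u\|_{L_t^\infty H_x^1}+\|v-u\|_{X(I)}\lesssim e^{C_1 K^{4/\varepsilon}}\eta$; the ``in particular'' claim then follows because $e^{C_1 K^{4/\varepsilon}}\eta\le e^{C_1 K^{4/\varepsilon}}\eta_1(K)\lesssim\eta_0\lesssim1$. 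The main obstacle I anticipate is precisely this bookkeeping: one must verify that the hypothesis $\|v(t_{j-1})-u(t_{j-1})\|_{H_x^1}\le\eta_0$ needed to re-invoke Theorem \ref{t7.1} survives all $J\sim K^{4/\varepsilon}$ steps, and it is exactly this constraint that pins down the exponentially small (in $K^{4/\varepsilon}$) threshold $\eta_1(K)$; the specific exponent $4/\varepsilon$ is dictated by the $\ell^{4/\varepsilon}_j$-divisibility in Proposition \ref{P3.2}(ii) that governs $J$, and everything else is the harmless propagation of absolute constants.
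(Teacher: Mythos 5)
Your proposal follows essentially the same argument as the paper: partition $I$ into $J\lesssim K^{4/\varepsilon}$ pieces using the time-divisibility property of Proposition \ref{P3.2}(ii), iterate the short-time perturbation estimate of Theorem \ref{t7.1} while tracking the geometric growth of the data mismatch, choose $\eta_1(K)$ exponentially small in $K^{4/\varepsilon}$ so that the induction hypothesis survives all $J$ steps, and finally sum the subinterval bounds. The only quibble is that your bound $C_0+C_0^2+\cdots+C_0^j\le 2C_0^{j}$ tacitly needs $C_0\ge 2$ (the paper works with $(2C_0)^{j}$, which only needs $C_0\ge1$), but this is harmless since one can enlarge the absolute constant $C_0$ without loss.
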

\begin{proof}
Without loss of generality, we assume that $t_0=\inf I$. We first use the time divisibility property of the $X(I)$ norm to partition the interval $I$ into $J \equiv J(K)$ consecutive intervals $I_j$, $j=1, \ldots, J$, with disjoint interiors such that
$$
\|u\|_{X\left(I_j\right)} \leq \delta
$$
for $j=1, \ldots, J$, where $0<\delta \ll 1$ is the constant from Theorem \ref{t7.1}. Denote $t_{j-1}:=\inf I_j$ for $j=1, \ldots, J$. We would like to apply Theorem \ref{t7.1} on each interval $I_j$ to infer bounds on the $X(I_j)$ norm of $v-u$. To this end we have to make sure that for $j=1, \ldots, J$, it holds that
\begin{equation}\label{eq7.4}
  \|F\|_{Y\left(I_j\right)} \leq \eta_0
\end{equation}
and
\begin{equation}\label{eq7.5}
  \left\|v\left(t_{j-1}\right)-u\left(t_{j-1}\right)\right\|_{H_x^1\left(\mathbb{R}^4\right)} \leq \eta_0.
\end{equation}
where $0<\eta_0 \ll 1$ is the constant from Theorem \ref{t7.1}. Below we will in particular choose $0<\eta_1(K) \leq \eta_0$ which takes care of \eqref{eq7.4}. To ensure \eqref{eq7.5}, by using induction, it follows that
\begin{equation}\label{eq7.6}
  \|v-u\|_{L_t^{\infty} H_x^1\left(I_j \times \mathbb{R}^4\right)}+\|v-u\|_{X\left(I_j\right)} \leq\left(2 C_0\right)^j \eta
\end{equation}
for $j=1, \ldots, J$, if we choose $0<\eta_1(K) \ll 1$ sufficiently small depending on the size of $K$. Obviously, \eqref{eq7.5} holds for $j=1$, we obtain \eqref{eq7.6} for the case $j=1$ from an application of \eqref{eq7.3}. Now suppose that \eqref{eq7.6} holds for all $1 \leq i \leq j-1$ and suppose that
$$
\left(2 C_0\right)^{j-1} \eta \leq \eta_0,
$$
then we can prove that \eqref{eq7.6} also holds for $j$. By the inductive hypothesis we can apply \eqref{eq7.3} on the interval $I_j$ and obtain that
\begin{eqnarray*}
\|v-u\|_{L_t^{\infty} H_x^1\left(I_j \times \mathbb{R}^4\right)}+\|v-u\|_{X\left(I_j\right)} & \leq& C_0\left(\left\|v\left(t_{j-1}\right)-u\left(t_{j-1}\right)\right\|_{H_x^1\left(\mathbb{R}^4\right)}+\eta\right) \\
& \leq& C_0\left(\left(2 C_0\right)^{j-1} \eta+\eta\right) \\
& \leq&\left(2 C_0\right)^j \eta,
\end{eqnarray*}
which yields \eqref{eq7.6} for $j$. In order to complete the induction, we need to ensure
$(2 C_0)^{j-1} \eta \leq \eta_0$ for $j=1, \ldots, J$. In fact, by Proposition \ref{P3.2}(ii), we have that
$$
J \sim \frac{\|u\|_{X(I)}^{\frac{4}{\varepsilon}}}{\delta^{\frac{4}{\varepsilon}}} \lesssim K^{\frac{4}{\varepsilon}} .
$$
Hence, it suffices to fix
$$
\eta_1(K):=e^{-C_1 K^{\frac{4}{\varepsilon}}} \eta_0
$$
for some large absolute constant $C_1 \gg 1$. Finally, we sum up the bounds \eqref{eq7.6} to obtain that
\begin{eqnarray*}
\|v-u\|_{L_t^{\infty} H_x^1\left(I \times \mathbb{R}^4\right)}+\|v-u\|_{X(I)} & \leq& \sum_{j=1}^J\|v-u\|_{L_t^{\infty} H_x^1\left(I_j \times \mathbb{R}^4\right)}+\|v-u\|_{X\left(I_j\right)} \\
& \leq& \sum_{j=1}^J\left(2 C_0\right)^j \eta \\
& \lesssim&\left(2 C_0\right)^J \eta \\
& \lesssim& e^{C_1 K^{\frac{4}{\varepsilon}}} \eta .
\end{eqnarray*}
This establishes assertions follow from the choice of $\eta_1(K)$.
\end{proof}
\section{Orbital stability}
In this section, we consider the following nonlinear Schr\"odinger equation with mixed power nonlinearities
\begin{equation}\label{eq8.1}
  i \partial_t u+\Delta u-V(x)u +|u|^{q-2}u+|u|^{2^*-2}u=0,\ (x, t) \in \mathbb{R}^d \times \mathbb{R},
\end{equation}
where  $d\geq4, u:\mathbb{R}^d\times\mathbb{R}\rightarrow\mathbb{C}, \mu>0, 2<q<2+\frac{4}{d}$ and $2^*=\frac{2d}{d-2}$. Obviously, we can obtain global well-posedness results for \eqref{eq8.1}, see \cite{JWZY2024}. Now, we are interested in almost sure orbital stability of ground state standing waves of prescribed mass for \eqref{eq8.1}.

We recall that standing waves to \eqref{eq8.1} are solutions of the form $v(x,t)=e^{-i \lambda t}u(x),\ \lambda\in\mathbb{R}$. Then the function $u(x)$ satisfies the equation
\begin{equation}\label{eq8.2}
  -\Delta u+V(x)u-\lambda u=|u|^{q-2}u+|u|^{2^*-2}u,\ x \in \mathbb{R}^d .
\end{equation}
One can search for solutions to \eqref{eq8.2} having a prescribed $L^2$-norm. Defining on $H^1(\mathbb{R}^d, \mathbb{R})$ the energy functional
\begin{equation*}
  I(u)=\frac{1}{2}\|\nabla u\|_2^2+\frac{1}{2}\int_{\mathbb{R}^d}V(x)u^2dx-\frac{1}{q}\|u\|_q^q-\frac{1}{2^*}\|u\|_{2^*}^{2^*},
\end{equation*}
it is standard to check that $I(u)$ is of class $C^1$ and that a critical point of $I_{\mu}$ restricted to the (mass) constraint
\begin{equation*}
  S(a) := \{u\in H^1(\mathbb{R}^d, \mathbb{R}) : \|u\|_2^2 = a\}
\end{equation*}
gives rise to a solution to \eqref{eq8.2}, satisfying $\|u\|_2^2 = a$.
\subsection{Ground state solution}
We shall make use of the following classical inequalities:

\textbf{Sobolev inequality}(see \cite{HB1983}) For any $d \geq 4$ there exists an optimal constant $\mathcal{S}>0$ depending only on $d$, such that
\begin{equation}\label{eq8.3}
  \mathcal{S}\|u\|_{2^*}^2 \leq\|\nabla u\|_2^2,\ \forall u \in H^1(\mathbb{R}^d).
\end{equation}

\textbf{Gagliardo-Nirenberg inequality}(see \cite{LN1959}) If $d \geq 2, p \in[2, \frac{2d}{d-2})$ and $\beta=d(\frac{1}{2}-\frac{1}{p})$, then
\begin{equation}\label{eq8.4}
  \|u\|_p \leq C_{d, p}\|\nabla u\|_2^\beta\|u\|_2^{1-\beta},
\end{equation}
for all $u \in H^1(\mathbb{R}^d)$.

Now, it follows from \eqref{eq8.4} that
\begin{eqnarray*}
I(u)&=&\frac{1}{2}\|\nabla u\|_2^2+\frac{1}{2}\int_{\mathbb{R}^d}V(x)u^2dx-\frac{1}{q}\|u\|_q^q-\frac{1}{2^*}\|u\|_{2^*}^{2^*} \\
&\geq&\frac{1}{2}\left(1-\left\|V_{-}\right\|_{\frac{d}{2}} \mathcal{S}^{-1}\right) \int_{\mathbb{R}^d}|\nabla u|^2 d x-\frac{C_{d, q}^q a^{\frac{2 q-d(q-2)}{4}}}{q}\left(\int_{\mathbb{R}^d}|\nabla u|^2 d x\right)^{\frac{d(q-2)}{4}}\nonumber\\
&&-\frac{1}{2^*\cdot \mathcal{S}^{
\frac{2^*}{2}}}\left(\int_{\mathbb{R}^d}|\nabla u|^2 d x\right)^{\frac{2^*}{2}},
\end{eqnarray*}
we consider the function $f(a,\rho)$ defined on $(0, \infty)\times(0, \infty)$ by
$$
f(a,\rho):=\frac{1}{2}\left(1-\left\|V_{-}\right\|_{\frac{d}{2}} \mathcal{S}^{-1}\right) -\frac{C_{d, q}^q a^{\frac{2 q-d(q-2)}{4}}}{q}\rho^{\frac{d(q-2)-4}{4}}-\frac{1}{2^*\cdot \mathcal{S}^{
\frac{2^*}{2}}}\rho^{\frac{2^*-2}{2}}.
$$
For each $a\in(0, \infty)$, its restriction $f_a(\rho)$ defined on $(0, \infty)$ by $f_a(\rho):=f(a, \rho)$.
\begin{lemma}\label{L8.1}
For each $a>0$, the function $f(a,\rho)$ has a unique global maximum and the maximum value satisfies
$$
\begin{cases}\max\limits_{\rho>0} f_a(\rho)>0, & \text { if } \quad a<a_0, \\ \max\limits_{\rho>0}f_a(\rho)=0 & \text { if } \quad a=a_0, \\ \max\limits_{\rho>0}f_a(\rho)<0, & \text { if } \quad a>a_0,\end{cases}
$$
where
$$
a_0:=\left[\frac{ \frac{1}{2}\left(1-\left\|V_{-}\right\|_{\frac{d}{2}} \mathcal{S}^{-1}\right)}{2 K}\right]^{\frac{d}{2}}>0
$$
with
\begin{eqnarray*}
K &:=&\frac{C_{d, q}^q }{q} \left[ \frac{d[4-d(q-2)]C_{d,q}^q\mathcal{S}^{\frac{2^*}{2}}}{4q}\right]^{\frac{d(q-2)-4}{2\cdot2^*-d(q-2)}} +\frac{1}{2^* \mathcal{S}^{\frac{2^*}{2}}}\left[ \frac{d[4-d(q-2)]C_{d,q}^q\mathcal{S}^{\frac{2^*}{2}}}{4q}\right]^{\frac{2(2^*-2)}{2\cdot2^*-d(q-2)}}\\
&>&0.
\end{eqnarray*}
\end{lemma}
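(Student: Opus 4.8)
The plan is to reduce the statement to an elementary one–variable calculus problem for $f_a(\rho)$ on $(0,\infty)$. Introduce the abbreviations
\[
\alpha:=\frac{4-d(q-2)}{4},\qquad \beta:=\frac{2^{*}-2}{2}=\frac{2}{d-2},\qquad \gamma:=\frac{2q-d(q-2)}{4},
\]
\[
C_{0}:=\frac{1}{2}\bigl(1-\|V_{-}\|_{\frac{d}{2}}\mathcal{S}^{-1}\bigr),\qquad A(a):=\frac{C_{d,q}^{q}}{q}\,a^{\gamma},\qquad B:=\frac{1}{2^{*}\mathcal{S}^{2^{*}/2}},
\]
so that $f_{a}(\rho)=C_{0}-A(a)\rho^{-\alpha}-B\rho^{\beta}$. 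The assumption $2<q<2+\frac{4}{d}$ forces $d(q-2)<4$, hence $\alpha>0$; since $q(2-d)+2d>\frac{8}{d}>0$ one also gets $\gamma>0$; and $2^{*}>2$ gives $\beta>0$. In particular $f_{a}(\rho)\to-\infty$ as $\rho\to0^{+}$ (where $-A(a)\rho^{-\alpha}$ dominates) and as $\rho\to+\infty$ (where $-B\rho^{\beta}$ dominates), so $f_{a}$ attains its supremum at an interior point of $(0,\infty)$.

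For uniqueness of the maximizer I would compute
\[
f_{a}'(\rho)=\alpha A(a)\rho^{-\alpha-1}-\beta B\rho^{\beta-1}=\rho^{-\alpha-1}\bigl(\alpha A(a)-\beta B\rho^{\alpha+\beta}\bigr).
\]
The factor $\rho^{-\alpha-1}$ is positive and $\alpha A(a)-\beta B\rho^{\alpha+\beta}$ is strictly decreasing from $\alpha A(a)>0$ to $-\infty$, so it vanishes at exactly one point
\[
\rho_{a}:=\Bigl(\frac{\alpha A(a)}{\beta B}\Bigr)^{\frac{1}{\alpha+\beta}},
\]
with $f_{a}'>0$ on $(0,\rho_{a})$ and $f_{a}'<0$ on $(\rho_{a},\infty)$. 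Hence $\rho_{a}$ is the unique critical point and the unique global maximum of $f_{a}$, which settles the first claim.

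It remains to study $M(a):=f_{a}(\rho_{a})=C_{0}-\bigl(A(a)\rho_{a}^{-\alpha}+B\rho_{a}^{\beta}\bigr)$. Plugging $\rho_{a}^{\alpha+\beta}=\alpha A(a)/(\beta B)$ into both terms yields $A(a)\rho_{a}^{-\alpha}+B\rho_{a}^{\beta}=A(a)^{\frac{\beta}{\alpha+\beta}}B^{\frac{\alpha}{\alpha+\beta}}\bigl[(\tfrac{\beta}{\alpha})^{\frac{\alpha}{\alpha+\beta}}+(\tfrac{\alpha}{\beta})^{\frac{\beta}{\alpha+\beta}}\bigr]=\kappa\,a^{\gamma\beta/(\alpha+\beta)}$ with $\kappa>0$ absorbing $B$ and the bracketed constant. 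The only arithmetic one really needs is the identity $\gamma\cdot\frac{\beta}{\alpha+\beta}=\frac{2}{d}$, which follows from $4(\alpha+\beta)=2\cdot2^{*}-d(q-2)$, from $2q-d(q-2)=4-(d-2)(q-2)$, and from $2^{*}/\beta=d$; the same identities let one rewrite $\kappa$ and $\rho_{a}$ purely in terms of $C_{d,q},\mathcal{S},d,q$. Thus $M(a)=C_{0}-\kappa\,a^{2/d}$ is continuous and strictly decreasing with $M(0^{+})=C_{0}>0$ and $M(a)\to-\infty$; consequently there is a unique $a_{0}>0$ with $M(a_{0})=0$, and $M(a)>0$ for $a<a_{0}$, $M(a)<0$ for $a>a_{0}$. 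Matching $\kappa$ with the summands of $K$ --- the first coming from $A(a)\rho_{a}^{-\alpha}$ and the second from $B\rho_{a}^{\beta}$, evaluated at unit mass, whose exponents are $\frac{d(q-2)-4}{2\cdot2^{*}-d(q-2)}=-\frac{\alpha}{\alpha+\beta}$ and $\frac{2(2^{*}-2)}{2\cdot2^{*}-d(q-2)}=\frac{\beta}{\alpha+\beta}$ respectively --- produces the closed form for $a_{0}$ in terms of $C_{0}$ and $K$ asserted in the statement.

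The main obstacle is purely one of bookkeeping: carrying the constants $C_{d,q}$, $\mathcal{S}$, $2^{*}$ through the substitution of $\rho_{a}$ and checking that the two exponents appearing in $K$ are exactly $\mp\frac{\alpha}{\alpha+\beta}$ and $\frac{\beta}{\alpha+\beta}$. The qualitative part --- existence and uniqueness of the maximum, and strict monotonicity of $a\mapsto M(a)$ --- is immediate from the sign pattern of $f_{a}'$ together with $\gamma>0$.
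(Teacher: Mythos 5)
Your proof is correct and follows essentially the same route as the paper: compute $f_a'$, locate the unique critical point $\rho_a$ (using the sign pattern of $f_a'$ and the behavior of $f_a$ as $\rho\to 0^+$ and $\rho\to\infty$ to show it is the unique global maximum), then evaluate $M(a)=f_a(\rho_a)=C_0-\kappa a^{2/d}$ and use strict monotonicity in $a$. One bookkeeping note: carrying your identification through gives $\kappa=K$ and hence $a_0=(C_0/K)^{d/2}$, which agrees with the paper's own final line $\max_{\rho>0}f_a(\rho)=\frac{1}{2}\bigl(1-\|V_-\|_{d/2}\mathcal{S}^{-1}\bigr)-Ka^{2/d}$; the factor $2K$ appearing in the displayed definition of $a_0$ thus looks like a typographical slip for $K$, and your derivation exposes it rather than introducing an error of its own.
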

\begin{proof}
By definition of $f_a(\rho)$, we have that
$$
f_a^{\prime}(\rho)=-\frac{C_{d, q}^q a^{\frac{2 q-d(q-2)}{4}}(d(q-2)-4)}{4q}\rho^{\frac{d(q-2)-8}{4}}-\frac{2^*-2}{2}\cdot\frac{1}{2^*\cdot \mathcal{S}^{
\frac{2^*}{2}}}\rho^{\frac{2^*-4}{2}}.
$$
Hence, the equation $f^{\prime}(\rho)=0$ has a unique solution given by
$$
\rho_a=\left[ \frac{d[4-d(q-2)]C_{d,q}^q\mathcal{S}^{\frac{2^*}{2}}}{4q}\right]^{\frac{4}{2\cdot2^*-d(q-2)}} a^{\frac{2q-d(q-2)}{2\cdot2^*-d(q-2)}} .
$$
Taking into account that $f_a(\rho) \rightarrow-\infty$ as $\rho \rightarrow 0$ and $f_a(\rho) \rightarrow-\infty$ as $\rho \rightarrow \infty$, we obtain that $\rho_a$ is the unique global maximum point of $f(a,\rho)$ and the maximum value is
\begin{eqnarray*}
\max_{\rho>0}f_a(\rho) &=&\frac{1}{2}\left(1-\left\|V_{-}\right\|_{\frac{d}{2}} \mathcal{S}^{-1}\right) -\frac{C_{d, q}^q a^{\frac{2 q-d(q-2)}{4}}}{q} \left[ \frac{d[4-d(q-2)]C_{d,q}^q\mathcal{S}^{\frac{2^*}{2}}}{4q}\right]^{\frac{d(q-2)-4}{2\cdot2^*-d(q-2)}}    \\
&&\cdot a^{\frac{2q-d(q-2)}{2\cdot2^*-d(q-2)}\cdot\frac{d(q-2)-4}{4}}-\frac{a^{\frac{2q-d(q-2)}{2\cdot2^*-d(q-2)}\cdot\frac{2^*-2}{2}}}{2^*\cdot \mathcal{S}^{\frac{2^*}{2}}}\left[ \frac{d[4-d(q-2)]C_{d,q}^q\mathcal{S}^{\frac{2^*}{2}}}{4q}\right]^{\frac{2(2^*-2)}{2\cdot2^*-d(q-2)}} \\
&=& \frac{1}{2}\left(1-\left\|V_{-}\right\|_{\frac{d}{2}} \mathcal{S}^{-1}\right) -\frac{C_{d, q}^q }{q} \left[ \frac{d[4-d(q-2)]C_{d,q}^q\mathcal{S}^{\frac{2^*}{2}}}{4q}\right]^{\frac{d(q-2)-4}{2\cdot2^*-d(q-2)}}a^{\frac{2}{d}}    \\
&&-\frac{1}{2^*\cdot \mathcal{S}^{\frac{2^*}{2}}}\left[ \frac{d[4-d(q-2)]C_{d,q}^q\mathcal{S}^{\frac{2^*}{2}}}{4q}\right]^{\frac{2(2^*-2)}{2\cdot2^*-d(q-2)}}a^{\frac{2}{d}} \\
&=& \frac{1}{2}\left(1-\left\|V_{-}\right\|_{\frac{d}{2}} \mathcal{S}^{-1}\right)-Ka^{\frac{2}{d}}
\end{eqnarray*}
By the definition of $a_0$, we have that $\max\limits_{\rho>0} f_{a_0}(\rho)=0$, and hence the lemma follows.
\end{proof}
\begin{lemma}\label{L8.2}
Let $(a_1, \rho_1) \in(0, \infty) \times(0, \infty)$ be such that $f(a_1, \rho_1) \geq 0$. Then for any $a_2 \in(0, a_1]$, it holds
$$
f(a_2, \rho_2) \geq 0  \text { if }  \rho_2 \in\left[\frac{a_2}{a_1} \rho_1, \rho_1\right].
$$
\end{lemma}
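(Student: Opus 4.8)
The plan is to split $f$ into a constant plus two nonnegative, subtracted terms, and to check that each subtracted term is no larger at $(a_2,\rho_2)$ than at $(a_1,\rho_1)$. Concretely, write
\[
f(a,\rho)=A-P(a,\rho)-Q(\rho),
\]
where $A:=\tfrac12\big(1-\|V_{-}\|_{\frac{d}{2}}\mathcal{S}^{-1}\big)$ does not depend on $(a,\rho)$, and
\[
P(a,\rho):=\frac{C_{d,q}^q}{q}\,a^{\alpha}\rho^{-\beta},\qquad Q(\rho):=\frac{1}{2^*\mathcal{S}^{2^*/2}}\,\rho^{\gamma},
\]
with $\alpha:=\tfrac{2q-d(q-2)}{4}$, $\beta:=\tfrac{4-d(q-2)}{4}$ and $\gamma:=\tfrac{2^*-2}{2}=\tfrac{2}{d-2}$. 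The key structural observation is that the range $2<q<2+\tfrac4d$ forces $0<d(q-2)<4$, so that $\alpha>0$, $\beta\in(0,1)$ and $\gamma>0$; in particular $P,Q\geq0$, and it suffices to prove $P(a_2,\rho_2)\leq P(a_1,\rho_1)$ and $Q(\rho_2)\leq Q(\rho_1)$, since then $f(a_2,\rho_2)\geq A-P(a_1,\rho_1)-Q(\rho_1)=f(a_1,\rho_1)\geq0$.

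The bound for $Q$ is immediate: $\gamma>0$ makes $Q$ nondecreasing, and $\rho_2\leq\rho_1$ by hypothesis. For $P$ I would compute the ratio
\[
\frac{P(a_2,\rho_2)}{P(a_1,\rho_1)}=\Big(\frac{a_2}{a_1}\Big)^{\alpha}\Big(\frac{\rho_1}{\rho_2}\Big)^{\beta}.
\]
The lower constraint $\rho_2\geq\tfrac{a_2}{a_1}\rho_1$ gives $\tfrac{\rho_1}{\rho_2}\leq\tfrac{a_1}{a_2}$, and since $\beta>0$ this yields $\big(\tfrac{\rho_1}{\rho_2}\big)^{\beta}\leq\big(\tfrac{a_1}{a_2}\big)^{\beta}$, so that
\[
\frac{P(a_2,\rho_2)}{P(a_1,\rho_1)}\leq\Big(\frac{a_2}{a_1}\Big)^{\alpha-\beta}.
\]
Finally $\alpha-\beta=\tfrac{q-2}{2}>0$ and $a_2/a_1\leq1$ give $\big(\tfrac{a_2}{a_1}\big)^{\alpha-\beta}\leq1$, i.e. $P(a_2,\rho_2)\leq P(a_1,\rho_1)$, which completes the argument.

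There is no serious obstacle here; the only delicate point is the bookkeeping of exponents, namely verifying that both subtracted terms are monotone in the correct direction on the interval $[\tfrac{a_2}{a_1}\rho_1,\rho_1]$. The choice of endpoints in the statement is exactly what makes this work: the lower endpoint $\tfrac{a_2}{a_1}\rho_1$ balances the growth of $P$ in $a$ against its decay in $\rho$, while the upper endpoint $\rho_1$ keeps $Q$ from growing, and the inequality $\alpha>\beta$ (equivalent to $q>2$) is precisely what is needed to close the estimate for $P$ at the lower endpoint.
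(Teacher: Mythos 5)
Your proof is correct, and it takes a genuinely different route from the paper's. The paper's argument checks nonnegativity only at the two endpoints of the interval $\bigl[\tfrac{a_2}{a_1}\rho_1,\rho_1\bigr]$ (the right endpoint via the monotonicity of $a\mapsto f(a,\rho)$, the left endpoint via a direct computation like your ratio bound) and then invokes the unimodality of $\rho\mapsto f_{a_2}(\rho)$ established in Lemma~\ref{L8.1} to interpolate: a function with a single interior critical point that is a global maximum cannot dip below zero between two points where it is nonnegative. Your argument avoids the appeal to Lemma~\ref{L8.1} entirely by showing termwise that the two subtracted quantities $P(a_2,\rho_2)$ and $Q(\rho_2)$ are each dominated by $P(a_1,\rho_1)$ and $Q(\rho_1)$ on the whole interval, yielding the stronger pointwise inequality $f(a_2,\rho_2)\geq f(a_1,\rho_1)$ rather than merely $f(a_2,\rho_2)\geq0$. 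This is more elementary and more self-contained. The exponent bookkeeping is sound: from $2<q<2+\tfrac4d$ one has $0<d(q-2)<4$, hence $\alpha>0$, $\beta\in(0,1)$, $\gamma>0$, and $\alpha-\beta=\tfrac{q-2}{2}>0$ is exactly the strict inequality $q>2$ that the paper also uses at its lower-endpoint computation; the constraint $\rho_2\geq\tfrac{a_2}{a_1}\rho_1$ is used in your bound in the same role that it plays in the paper's evaluation at the left endpoint. One minor remark: you note $\beta<1$ but never use it; only $\beta>0$ is needed.
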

\begin{proof}
Note that $a \rightarrow f(a, \rho)$ is a non-increasing function, so we have
\begin{equation}\label{eq8.5}
  f(a_2, \rho_1) \geq f(a_1, \rho_1) \geq 0.
\end{equation}
Now, by direct calculations, one has
\begin{eqnarray}\label{eq8.6}
f(a_2, \frac{a_2}{a_1} \rho_1)&=&\frac{1}{2}\left(1-\left\|V_{-}\right\|_{\frac{d}{2}} \mathcal{S}^{-1}\right) -\frac{C_{d, q}^q a_2^{\frac{2 q-d(q-2)}{4}}}{q}\left(\frac{a_2}{a_1} \rho_1\right)^{\frac{d(q-2)-4}{4}}-\frac{1}{2^*\cdot \mathcal{S}^{
\frac{2^*}{2}}}\left(\frac{a_2}{a_1} \rho_1\right)^{\frac{2^*-2}{2}}\nonumber\\
&\geq&\frac{1}{2}\left(1-\left\|V_{-}\right\|_{\frac{d}{2}} \mathcal{S}^{-1}\right) -\frac{C_{d, q}^q a_1^{\frac{2 q-d(q-2)}{4}}}{q}\rho_1^{\frac{d(q-2)-4}{4}}-\frac{1}{2^*\cdot \mathcal{S}^{
\frac{2^*}{2}}}\rho_1^{\frac{2^*-2}{2}}  \nonumber\\
&=&f(a_1, \rho_1)\nonumber\\
&\geq& 0
\end{eqnarray}
since $q-2>0$. We claim that if $f_{a_2}\left(\rho^{\prime}\right) \geq 0$ and $f_{a_2}\left(\rho^{\prime \prime}\right) \geq 0$, then
\begin{equation}\label{eq8.7}
  f(a_2, \rho)=g_{a_2}(\rho) \geq 0 \text { for any } \rho \in\left[\rho^{\prime}, \rho^{\prime\prime}\right] .
\end{equation}
In fact, if $f_{a_2}(\rho)<0$ for some $\rho \in (\rho^{\prime}, \rho^{\prime \prime} )$, then there exists a local minimum point on $(\rho_1, \rho_2)$ and this contradicts the fact that the function $g_2(\rho)$ has a unique critical point which has to coincide necessarily with its unique global maximum via Lemma \ref{L8.1}. Now, we can choose $\rho^{\prime}=\frac{a_2}{a_1}\rho_1$ and $\rho^{\prime \prime}=\rho_1$, then \eqref{eq8.5}-\eqref{eq8.7} imply the lemma.
\end{proof}
\begin{lemma}\label{L8.3}
For any $u \in S(a)$, we have that
$$
I(u) \geq\|\nabla u\|_2^2 f\left(a,\|\nabla u\|_2^2\right) .
$$
\end{lemma}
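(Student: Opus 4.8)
The plan is to obtain the inequality by simply reorganizing the lower bound for $I(u)$ that was already derived in the display preceding Lemma \ref{L8.1}: one factors $\|\nabla u\|_2^2$ out of that bound and recognizes the remaining bracketed quantity as $f(a,\|\nabla u\|_2^2)$. Concretely, starting from
\[
I(u)=\frac12\|\nabla u\|_2^2+\frac12\int_{\mathbb{R}^d}V(x)u^2\,dx-\frac1q\|u\|_q^q-\frac1{2^*}\|u\|_{2^*}^{2^*},
\]
I would estimate the potential term from below and the two nonlinear terms from above, one by one, and then collect the resulting powers of $\|\nabla u\|_2$.

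First, for the potential term, since $V(x)\ge V_-(x)$ pointwise, Hölder's inequality with the conjugate exponents $\frac d2$ and $\frac d{d-2}$ (so that $\|u^2\|_{d/(d-2)}=\|u\|_{2^*}^2$) followed by the Sobolev inequality \eqref{eq8.3} gives
\[
\int_{\mathbb{R}^d}V(x)u^2\,dx\ \ge\ -\|V_-\|_{\frac d2}\,\|u\|_{2^*}^2\ \ge\ -\|V_-\|_{\frac d2}\,\mathcal{S}^{-1}\|\nabla u\|_2^2.
\]
Next, for the subcritical term I would apply the Gagliardo--Nirenberg inequality \eqref{eq8.4} with $\beta=d\left(\frac12-\frac1q\right)$, so that $q\beta=\frac{d(q-2)}{2}$ and $q(1-\beta)=\frac{2q-d(q-2)}{2}$; invoking the mass constraint $\|u\|_2^2=a$ this yields $\frac1q\|u\|_q^q\le\frac{C_{d,q}^q}{q}\,a^{\frac{2q-d(q-2)}{4}}\,\|\nabla u\|_2^{\frac{d(q-2)}{2}}$. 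Finally, \eqref{eq8.3} gives $\|u\|_{2^*}^{2^*}\le\mathcal{S}^{-\frac{2^*}{2}}\|\nabla u\|_2^{2^*}$, hence $\frac1{2^*}\|u\|_{2^*}^{2^*}\le\frac1{2^*\mathcal{S}^{2^*/2}}\|\nabla u\|_2^{2^*}$.

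Combining these three bounds, $I(u)$ is bounded below by
\[
\frac12\left(1-\|V_-\|_{\frac d2}\mathcal{S}^{-1}\right)\|\nabla u\|_2^2-\frac{C_{d,q}^q a^{\frac{2q-d(q-2)}{4}}}{q}\|\nabla u\|_2^{\frac{d(q-2)}{2}}-\frac1{2^*\mathcal{S}^{2^*/2}}\|\nabla u\|_2^{2^*}.
\]
Factoring $\|\nabla u\|_2^2$ out of every term and writing the leftover powers in terms of $\rho:=\|\nabla u\|_2^2$ — namely $\|\nabla u\|_2^{\frac{d(q-2)}{2}-2}=\rho^{\frac{d(q-2)-4}{4}}$ and $\|\nabla u\|_2^{2^*-2}=\rho^{\frac{2^*-2}{2}}$ — the bracketed factor is exactly $f(a,\rho)=f\big(a,\|\nabla u\|_2^2\big)$, which is the claim. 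There is no genuine obstacle here: the only points that require care are the bookkeeping of the Gagliardo--Nirenberg exponents and checking that the Hölder pair $\big(\frac d2,\frac d{d-2}\big)$ converts $\|u^2\|$ into the $2^*$-norm, both of which are routine.
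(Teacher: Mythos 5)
Your proof is correct and follows the same route as the paper: the display immediately preceding Lemma \ref{L8.1} already records exactly this lower bound for $I(u)$ (via Hölder with Sobolev on the potential and critical terms, and Gagliardo--Nirenberg with $\|u\|_2^2=a$ on the subcritical term), and the function $f(a,\rho)$ is defined precisely so that the bound reads $\|\nabla u\|_2^2 f(a,\|\nabla u\|_2^2)$, which is why the paper's proof simply says ``according to the definition of $f$, the lemma clearly holds.'' You have just spelled out the three estimates and the final bookkeeping explicitly, which is fine; the exponent computations ($q\beta=\tfrac{d(q-2)}{2}$, $q(1-\beta)=\tfrac{2q-d(q-2)}{2}$, and $\|\nabla u\|_2^{\frac{d(q-2)}{2}-2}=\rho^{\frac{d(q-2)-4}{4}}$) all check out.
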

\begin{proof}
According to the definition of $f(a,\rho)$, the lemma clearly holds.
\end{proof}
Note that by Lemma \ref{L8.1} and Lemma \ref{L8.2}, we have that $f(a_0, \rho_0)=0$ and $f(a, \rho_0)>0$ for all $a \in(0, a_0)$. We define
$$
\mathbf{B}_{\rho_0}:=\left\{u \in H^1(\mathbb{R}^d):\|\nabla u\|_2^2<\rho_0\right\} \text { and } \mathbf{V}(a):=S(a) \cap \mathbf{B}_{\rho_0} .
$$
Now, we consider the following local minimization problem: for any $a \in(0, a_0)$,
$$
m(a):=\inf_{u \in \mathbf{V}(a)} I(u).
$$
\begin{lemma}\label{L8.4}
For any $a \in\left(0, a_0\right)$, it holds that
$$
m(a)=\inf _{u \in \mathbf{V}(a)} I(u)<0<\inf _{u \in \partial \mathbf{V}(a)} I(u) .
$$
\end{lemma}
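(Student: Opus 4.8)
The plan is to prove the two strict inequalities separately. For the right-hand one, $0<\inf_{u\in\partial\mathbf{V}(a)}I(u)$, I would argue directly from Lemma \ref{L8.3}: every $u\in\partial\mathbf{V}(a)$ satisfies $u\in S(a)$ and $\|\nabla u\|_2^2=\rho_0$, so Lemma \ref{L8.3} gives $I(u)\geq\rho_0\,f(a,\rho_0)$. Since $a<a_0$, the observation recorded just before the present lemma (which follows from Lemma \ref{L8.1} and Lemma \ref{L8.2} together with the strict monotonicity of $a\mapsto f(a,\rho)$ for fixed $\rho>0$, the relevant exponent $2q-d(q-2)>2(q-2)>0$ being positive because $q<2+\frac4d$ forces $d(q-2)<4$) yields $f(a,\rho_0)>f(a_0,\rho_0)=0$. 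Passing to the infimum gives $\inf_{\partial\mathbf{V}(a)}I\geq\rho_0 f(a,\rho_0)>0$.

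For the left-hand inequality $m(a)<0$ I would exhibit an element of $\mathbf{V}(a)$ with negative energy. Fix any $u\in S(a)$ (necessarily $u\not\equiv0$) and introduce the $L^2$-preserving dilations $u_t(x):=t^{d/2}u(tx)$, $t>0$; these satisfy $\|u_t\|_2^2=\|u\|_2^2=a$ and $\|\nabla u_t\|_2^2=t^2\|\nabla u\|_2^2$, so $u_t\in\mathbf{V}(a)$ as soon as $t^2\|\nabla u\|_2^2<\rho_0$. A scaling computation gives
\[
I(u_t)=\frac{t^2}{2}\|\nabla u\|_2^2+\frac12\int_{\mathbb{R}^d}V(x)u_t^2\,dx-\frac{1}{q}t^{\frac{d(q-2)}{2}}\|u\|_q^q-\frac{1}{2^*}t^{2^*}\|u\|_{2^*}^{2^*}.
\]
I would bound the potential term by H\"older's inequality with exponents $\frac d2$ and $\frac{d}{d-2}$ followed by the Sobolev inequality \eqref{eq8.3}, obtaining $\big|\int_{\mathbb{R}^d}V u_t^2\,dx\big|\leq\|V\|_{\frac d2}\,\mathcal{S}^{-1}t^2\|\nabla u\|_2^2$; thus the first two terms together are $O(t^2)$. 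Because $2<q<2+\frac4d$, the exponent $\frac{d(q-2)}{2}$ lies in $(0,2)$, hence is strictly smaller than both $2$ and $2^*$, so as $t\to0^+$ the negative term $-\frac1q t^{\frac{d(q-2)}{2}}\|u\|_q^q$ dominates; consequently $I(u_t)<0$ for all sufficiently small $t>0$, and therefore $m(a)=\inf_{\mathbf{V}(a)}I\leq I(u_t)<0$.

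I expect the only mildly delicate point to be the treatment of the potential term $\int_{\mathbb{R}^d}V u_t^2\,dx$ in the upper bound for $I(u_t)$: one must confirm it is at most $O(t^2)$ so that it is absorbed together with the kinetic term and thus beaten by the subcritical $L^q$ contribution, and one must check the ordering of the dilation exponents $\frac{d(q-2)}{2}<2<2^*$, which is precisely what the mass-subcritical range $2<q<2+\frac4d$ (with $d\geq4$) guarantees. Everything else is a routine scaling computation combined with the already established properties of the auxiliary function $f$ recorded in Lemmas \ref{L8.1}--\ref{L8.3}.
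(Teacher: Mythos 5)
Your proposal is correct and follows essentially the same route as the paper's own argument: the lower bound $\inf_{\partial\mathbf{V}(a)}I>0$ is obtained from Lemma~\ref{L8.3} together with $f(a,\rho_0)>f(a_0,\rho_0)=0$, and the strict negativity of $m(a)$ is obtained via the same $L^2$-invariant dilation $u_t(x)=t^{d/2}u(tx)$, bounding $\big|\int V u_t^2\big|\leq\|V\|_{\frac d2}\mathcal{S}^{-1}t^2\|\nabla u\|_2^2$ and noting that the mass-subcritical exponent $\tfrac{d(q-2)}{2}\in(0,2)$ makes the $-\tfrac1q\|u\|_q^q$ contribution dominate as $t\to0^+$. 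The only cosmetic difference is that you spell out why $a\mapsto f(a,\rho_0)$ is strictly decreasing, which the paper merely records as an observation preceding the lemma.
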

\begin{proof}
For any $u \in \partial \mathbf{V}(a)$, we have $\|\nabla u\|_2^2=\rho_0$. By using Lemma \ref{L8.3}, we get
$$
I(u) \geq\|\nabla u\|_2^2 f(\|u\|_2^2,\|\nabla u\|_2^2)=\rho_0 f(a, \rho_0)>0.
$$
Now let $u \in S(a)$ be arbitrary but fixed. For $s \in(0, \infty)$ we set
$$
u_s(x):=s^{\frac{d}{2}} u(s x) .
$$
Obviously, $u_s \in S(a)$ for any $s \in(0, \infty)$. We define on $(0, \infty)$ the map,
\begin{eqnarray*}
\psi_u(s):=I(u_s)&=&\frac{1}{2}\|\nabla u_s\|_2^2+\frac{1}{2}\int_{\mathbb{R}^d}V(x)u_s^2dx-\frac{1}{q}\|u_s\|_q^q-\frac{1}{2^*}\|u_s\|_{2^*}^{2^*}\\
&\leq&\frac{1}{2}\left(1+\|V\|_{\frac{d}{2}} \mathcal{S}^{-1}\right) \|\nabla u_s\|_2^2-\frac{1}{q}\|u_s\|_q^q-\frac{1}{2^*}\|u_s\|_{2^*}^{2^*}\\
&=&\frac{s^2}{2}\left(1+\|V\|_{\frac{d}{2}} \mathcal{S}^{-1}\right)\|\nabla u\|_2^2 -\frac{1}{q} s^{\frac{d(q-2)}{2}}\|u\|_q^q-\frac{s^{2^*}}{2^*}\|u\|_{2^*}^{2^*}.
\end{eqnarray*}
Note that $\frac{d(q-2)}{2}<2$ and $2^*>2$, we see that $\psi_u(s) \rightarrow 0^{-}$ as $s \rightarrow 0$. Hence, there exists $s_0>0$ small enough such that $\left\|\nabla(u_{s_0})\right\|_2^2=s_0^2\|\nabla u\|_2^2<\rho_0$ and $I(u_*)=\psi_\mu(s_0)<0$, which implies that $m(a)<0$.
\end{proof}
We now introduce the set
$$
\mathcal{M}_a:=\left\{u \in \mathbf{V}(a): I(u)=m(a)\right\}
$$
and we collect some properties of $m(a)$.
\begin{lemma}\label{L8.5}
$\mathrm{(i)}$ $a \in\left(0, a_0\right) \mapsto m(a)$ is a continuous mapping.

$\mathrm{(ii)}$ Let $a \in\left(0, a_0\right)$. We have for all $\alpha \in(0, a): m(a) \leq m(\alpha)+m(a-\alpha)$ and if $m(\alpha)$ or $m(a-\alpha)$ is reached then the inequality is strict.
\end{lemma}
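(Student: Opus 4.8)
The plan is to run everything through the mass‑increasing \emph{amplitude} rescaling $u\mapsto\sqrt{t}\,u$; the point of this particular scaling is that it multiplies the quadratic term $\tfrac12\int V u^2$ of $I$ by exactly $t$, so the potential plays no special role and no decay hypothesis on $V$ is needed. The crux, used in both parts, is an a priori bound: if $u\in\mathbf{V}(a)$ satisfies $I(u)<0$, then $u\ne0$, hence $\|\nabla u\|_2^2>0$, and Lemma~\ref{L8.3} gives $\|\nabla u\|_2^2\,f(a,\|\nabla u\|_2^2)\le I(u)<0$, so $f(a,\|\nabla u\|_2^2)<0$; since $u\in\mathbf{B}_{\rho_0}$ while Lemma~\ref{L8.2} applied with $(a_1,\rho_1)=(a_0,\rho_0)$ and $a_2=a$ shows $f(a,\rho)\ge0$ on $[\tfrac{a}{a_0}\rho_0,\rho_0]$, we conclude $\|\nabla u\|_2^2<\tfrac{a}{a_0}\rho_0$. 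By Lemma~\ref{L8.4} every (sufficiently good) minimizing element for $m(a)$ has negative energy, so the bound applies to them. This estimate is the only genuinely delicate point; it is precisely where the choice of $\rho_0$ (through $f(a_0,\rho_0)=0$) and the monotonicity of Lemma~\ref{L8.2} are used, and it is what will make $u\mapsto\sqrt{t}\,u$ land back inside the constraint set.

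For (ii) I would first establish the auxiliary claim: $m(ta)\le t\,m(a)$ for every $t\in[1,a_0/a)$, with strict inequality if $t>1$ and $m(a)$ is attained. Given $u\in\mathbf{V}(a)$ with $I(u)<0$ close to $m(a)$ (or $u$ a minimizer), set $w:=\sqrt{t}\,u\in S(ta)$; by the a priori bound $\|\nabla w\|_2^2=t\|\nabla u\|_2^2<t\cdot\tfrac{a}{a_0}\rho_0<\rho_0$ since $t<a_0/a$, so $w\in\mathbf{V}(ta)$, and
\[
I(w)-t\,I(u)=-\tfrac1q\bigl(t^{q/2}-t\bigr)\|u\|_q^q-\tfrac1{2^*}\bigl(t^{2^*/2}-t\bigr)\|u\|_{2^*}^{2^*}\le0,
\]
because $q>2$ and $2^*/2=\tfrac{d}{d-2}>1$ give $t^{q/2}\ge t$ and $t^{2^*/2}\ge t$ for $t\ge1$, strictly once $t>1$. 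Hence $m(ta)\le I(w)\le t\,I(u)$, and letting $I(u)\downarrow m(a)$ proves the claim. To deduce subadditivity, fix $0<\alpha<a$ and assume $\alpha\ge a-\alpha$ (otherwise swap). Applying the claim with base $\alpha$ and factor $a/\alpha\in(1,a_0/\alpha)$ gives $m(a)\le\tfrac{a}{\alpha}m(\alpha)=m(\alpha)+\tfrac{a-\alpha}{\alpha}m(\alpha)$, and with base $a-\alpha$ and factor $\tfrac{\alpha}{a-\alpha}\in[1,a_0/(a-\alpha))$ gives $m(\alpha)\le\tfrac{\alpha}{a-\alpha}m(a-\alpha)$, i.e.\ $\tfrac{a-\alpha}{\alpha}m(\alpha)\le m(a-\alpha)$; adding, $m(a)\le m(\alpha)+m(a-\alpha)$. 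If $m(\alpha)$ is attained the first application is strict ($a/\alpha>1$); if $m(a-\alpha)$ is attained with $\alpha>a-\alpha$ the second is strict; and if $\alpha=a-\alpha$ then attainment of $m(a-\alpha)$ is attainment of $m(\alpha)$, handled by the first case with factor $2$. Thus strict inequality holds whenever $m(\alpha)$ or $m(a-\alpha)$ is reached.

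For (i) I would prove two one‑sided bounds. Upper semicontinuity: for $u$ near‑optimal for $m(a)$ with $I(u)<0$, put $u_{a'}:=\sqrt{a'/a}\,u\in S(a')$; the a priori bound gives $\|\nabla u_{a'}\|_2^2=\tfrac{a'}{a}\|\nabla u\|_2^2<\tfrac{a'}{a_0}\rho_0<\rho_0$ for $a'<a_0$, so $u_{a'}\in\mathbf{V}(a')$, and since every term of $I(u_{a'})$ is a fixed power of $a'$ times a fixed quantity, $m(a')\le I(u_{a'})\to I(u)$ as $a'\to a$, whence $\limsup_{a'\to a}m(a')\le m(a)$. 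Lower semicontinuity: for $u_{a'}\in\mathbf{V}(a')$ near‑optimal for $m(a')$ with $I(u_{a'})<0$ (possible since, by the previous step, $m(a')$ stays below a negative constant near $a$), the a priori bound gives $\|\nabla u_{a'}\|_2^2<\tfrac{a'}{a_0}\rho_0$, so $v_{a'}:=\sqrt{a/a'}\,u_{a'}\in S(a)$ has $\|\nabla v_{a'}\|_2^2<\tfrac{a}{a_0}\rho_0<\rho_0$, hence $v_{a'}\in\mathbf{V}(a)$; since $u_{a'}$ is bounded in $H^1$ (mass $a'$, gradient controlled) the rescaling coefficients tend to $1$ and $I(v_{a'})-I(u_{a'})\to0$, so $m(a)\le I(v_{a'})=I(u_{a'})+o(1)\le m(a')+o(1)$, giving $m(a)\le\liminf_{a'\to a}m(a')$. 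Combining the two bounds yields continuity. The main obstacle, as indicated above, is entirely contained in the a priori gradient bound for negative‑energy elements; once that is secured, parts (i) and (ii) are routine and, in particular, require no concentration‑compactness or gluing argument.
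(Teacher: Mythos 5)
Your proof is correct and follows essentially the same route as the paper: prove the quasi-homogeneity inequality $m(ta)\le t\,m(a)$ via the amplitude rescaling $u\mapsto\sqrt{t}\,u$ combined with a gradient a priori bound from Lemmas~\ref{L8.2}--\ref{L8.4}, deduce subadditivity algebraically, and establish continuity by mass rescaling $u\mapsto\sqrt{a'/a}\,u$ controlled by the same bound. The only difference is cosmetic: you state the a priori bound once, uniformly, as $\|\nabla u\|_2^2<\tfrac{a}{a_0}\rho_0$ by applying Lemma~\ref{L8.2} at $(a_0,\rho_0)$, whereas the paper re-derives a pointwise version of it with varying base points in each step.
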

\begin{proof}
(i) Let $a \in(0, a_0)$ be arbitrary and $(a_n) \subset(0, a_0)$ be such that $a_n \rightarrow a$. From the definition of $m\left(a_n\right)$ and since $m\left(a_n\right)<0$(see Lemma \ref{L8.4}), for any $\varepsilon>0$ sufficiently small, there exists $u_n \in \mathbf{V}\left(a_n\right)$ such that
\begin{equation}\label{eq8.8}
  I\left(u_n\right) \leq m\left(a_n\right)+\varepsilon \text { and } I\left(u_n\right)<0 .
\end{equation}
Set $y_n:=\sqrt{\frac{a}{a_n}} u_n$ and hence $y_n \in S(a)$. We have that $y_n \in \mathbf{V}(a)$. In fact, if $a_n \geq a$, then
$$
\left\|\nabla y_n\right\|_2^2=\frac{a}{a_n}\left\|\nabla u_n\right\|_2^2 \leq\left\|\nabla u_n\right\|_2^2<\rho_0 .
$$
If $a_n<a$, we have $f\left(a_n, \rho\right) \geq 0$ for any $\rho \in\left[\frac{a_n}{a} \rho_0, \rho_0\right]$ by Lemma \ref{L8.2}. Hence, it follows from Lemma \ref{L8.3} and \eqref{eq8.8} that $f\left(a_n,\left\|\nabla u_n\right\|_2^2\right)<0$, thus $\left\|\nabla u_n\right\|_2^2<\frac{a_n}{a} \rho_0$ and
$$
\left\|\nabla y_n\right\|_2^2=\frac{a}{a_n}\left\|\nabla u_n\right\|_2^2<\frac{a}{a_n}\cdot\frac{a_n}{a} \rho_0=\rho_0 .
$$
Since $y_n \in \mathbf{V}(a)$, we can write
$$
m(a) \leq I\left(y_n\right)=I\left(u_n\right)+\left[I\left(y_n\right)-I\left(u_n\right)\right],
$$
where
\begin{eqnarray*}
&&I\left(y_n\right)-I\left(u_n\right)\\
&=&-\frac{1}{2}\left(\frac{a}{a_n}-1\right)\left\|\nabla u_n\right\|_2^2-\frac{1}{q}\left[\left(\frac{a}{a_n}\right)^{\frac{q}{2}}-1\right]\left\|u_n\right\|_q^q-\frac{1}{2^*}\left[\left(\frac{a}{a_n}\right)^{\frac{2^*}{2}}-1\right]\left\|u_n\right\|_{2^*}^{2^*}\\
&&+\frac{1}{2}\left[\frac{a}{a_n}-1\right]\int_{\mathbb{R}^d}V(x)u_n^2dx.
\end{eqnarray*}
Since $\left\|\nabla u_n\right\|_2^2<\rho_0$, then $\left\|u_n\right\|_q^q$ and $\left\|u_n\right\|_{2^*}^{2^*}$ are uniformly bounded. Hence, we have
\begin{equation}\label{eq8.9}
  m(a) \leq I\left(y_n\right)=I\left(u_n\right)+o_n(1)
\end{equation}
as $n \rightarrow \infty$. Combining \eqref{eq8.8} and \eqref{eq8.8}, we have
$$
m(a) \leq m\left(a_n\right)+\varepsilon+o_n(1).
$$
Now, let $u \in \mathbf{V}(a)$ be such that
$$
I(u) \leq m(a)+\varepsilon  \text { and }   I(u)<0 .
$$
Set $u_n:=\sqrt{\frac{a_n}{a}} u$ and hence $u_n \in S\left(a_n\right)$. Clearly, $\|\nabla u\|_2^2<\rho_0$ and $a_n \rightarrow a$ imply $\left\|\nabla u_n\right\|_2^2<\rho_0$ for $n$ large enough, so that $u_n \in \mathbf{V}\left(a_n\right)$ and $I\left(u_n\right) \rightarrow I(u)$. So we have
$$
m\left(a_n\right) \leq I\left(u_n\right)=I(u)+\left[I\left(u_n\right)-I(u)\right] \leq m(a)+\varepsilon+o_n(1) .
$$
Therefore, since $\varepsilon>0$ is arbitrary, we deduce that $m\left(a_n\right) \rightarrow m(a)$.

(ii) Fixed $\alpha \in(0, a)$, it is sufficient to prove that the following holds
\begin{equation}\label{eq8.10}
  \forall \theta \in\left(1, \frac{a}{\alpha}\right]: m(\theta \alpha) \leq \theta m(\alpha)
\end{equation}
and if $m(\alpha)$ is reached, the inequality is strict. In fact, if \eqref{eq8.10} holds then we have
\begin{eqnarray*}
m(a) & =&\frac{a-\alpha}{a} m(a)+\frac{\alpha}{a} m(a)\\
&=&\frac{a-\alpha}{a} m\left(\frac{a}{a-\alpha}(a-\alpha)\right)+\frac{\alpha}{a} m\left(\frac{a}{\alpha} \alpha\right) \\
& \leq& m(a-\alpha)+m(\alpha)
\end{eqnarray*}
with a strict inequality if $m(\alpha)$ is reached. In order to obtain that \eqref{eq8.10}, note that in view of Lemma \ref{L8.4}, there exists a $u \in V(\alpha)$ such that
\begin{equation}\label{eq8.11}
I(u) \leq m(\alpha)+\varepsilon \text { and } I(u)<0
\end{equation}
for any $\varepsilon>0$ sufficiently small. By using Lemma \ref{L8.2}, $f(\alpha, \rho) \geq 0$ for any $\rho \in\left[\frac{\alpha}{a} \rho_0, \rho_0\right]$. Hence, it follows from Lemma \ref{L8.3} and \eqref{eq8.11} that
\begin{equation}\label{eq8.12}
\|\nabla u\|_2^2<\frac{\alpha}{a} \rho_0 .
\end{equation}
Now, consider $v=\sqrt{\theta} u$. We first note that $\|v\|_2^2=\theta\|u\|_2^2=\theta \alpha$ and also, $\|\nabla v\|_2^2=\theta\|\nabla u\|_2^2< \rho_0$ because of \eqref{eq8.12}. Thus $v \in \mathbf{V}(\theta \alpha)$ and we can write
\begin{eqnarray*}
m(\theta \alpha)  \leq I(v)&=&\frac{1}{2} \theta\|\nabla u\|_2^2+\frac{1}{2}\theta\int_{\mathbb{R}^d}V(x)u^2dx-\frac{1}{q} \theta^{\frac{q}{2}}\|u\|_q^q-\frac{1}{2^*} \theta^{\frac{2^*}{2}}\|u\|_{2^*}^{2^*} \\
& <&\frac{1}{2} \theta\|\nabla u\|_2^2+\frac{1}{2}\theta\int_{\mathbb{R}^d}V(x)u^2dx-\frac{1}{q} \theta\|u\|_q^q-\frac{1}{2^*} \theta\|u\|_{2^*}^{2^*} \\
& =&\theta\left(\frac{1}{2}\|\nabla u\|_2^2+\frac{1}{2}\int_{\mathbb{R}^d}V(x)u^2dx-\frac{1}{q}\|u\|_q^q-\frac{1}{2^*}\|u\|_{2^*}^{2^*}\right)\\
&=&\theta I(u) \leq \theta(m(\alpha)+\varepsilon).
\end{eqnarray*}
Since $\varepsilon>0$ is arbitrary, we have that $m(\theta \alpha) \leq \theta m(\alpha)$. If $m(\alpha)$ is reached then we can let $\varepsilon=0$ in \eqref{eq8.11} and thus the strict inequality follows.
\end{proof}
\begin{lemma}\label{L8.6}
Let $\left(v_n\right) \subset \mathbf{B}_{\rho_0}$ be such that $\left\|v_n\right\|_q \rightarrow 0$. Then there exists a $\beta_0>0$ such that
$$
I\left(v_n\right) \geq \beta_0\left\|\nabla v_n\right\|_2^2+o_n(1).
$$
\end{lemma}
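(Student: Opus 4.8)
The plan is to estimate $I(v_n)$ from below exactly along the lines of the computation that produced the function $f(a,\rho)$ before Lemma~\ref{L8.1}, but now exploiting the membership $v_n\in\mathbf B_{\rho_0}$ to linearize the critical term $\|v_n\|_{2^*}^{2^*}$ and the hypothesis $\|v_n\|_q\to 0$ to throw the subcritical term into the error. Concretely, first handle the potential term: by H\"older's inequality and the Sobolev inequality \eqref{eq8.3},
\[
\int_{\mathbb R^d}V(x)v_n^2\,dx\;\ge\;-\int_{\mathbb R^d}V_-(x)v_n^2\,dx\;\ge\;-\|V_-\|_{\frac d2}\,\|v_n\|_{2^*}^2\;\ge\;-\|V_-\|_{\frac d2}\,\mathcal S^{-1}\,\|\nabla v_n\|_2^2 .
\]
Second, again by \eqref{eq8.3}, $\|v_n\|_{2^*}^{2^*}\le \mathcal S^{-\frac{2^*}{2}}\|\nabla v_n\|_2^{2^*}$, and since $v_n\in\mathbf B_{\rho_0}$ gives $\|\nabla v_n\|_2^2<\rho_0$ together with $2^*-2>0$, we may write $\|\nabla v_n\|_2^{2^*}=\big(\|\nabla v_n\|_2^2\big)^{\frac{2^*-2}{2}}\|\nabla v_n\|_2^2\le \rho_0^{\frac{2^*-2}{2}}\|\nabla v_n\|_2^2$. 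Finally $\|v_n\|_q\to 0$ yields $\tfrac1q\|v_n\|_q^q=o_n(1)$.

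Collecting these bounds in the definition of $I$,
\[
I(v_n)\;\ge\;\Big[\tfrac12\big(1-\|V_-\|_{\frac d2}\mathcal S^{-1}\big)-\tfrac{1}{2^*}\mathcal S^{-\frac{2^*}{2}}\rho_0^{\frac{2^*-2}{2}}\Big]\|\nabla v_n\|_2^2+o_n(1),
\]
so it remains to set $\beta_0:=\tfrac12\big(1-\|V_-\|_{\frac d2}\mathcal S^{-1}\big)-\tfrac{1}{2^*}\mathcal S^{-\frac{2^*}{2}}\rho_0^{\frac{2^*-2}{2}}$ and check $\beta_0>0$. This is where the specific choice of $\rho_0$ enters: recall (as noted after Lemma~\ref{L8.3}) that $f(a_0,\rho_0)=0$, i.e.
\[
\tfrac12\big(1-\|V_-\|_{\frac d2}\mathcal S^{-1}\big)=\tfrac{C_{d,q}^q\,a_0^{\frac{2q-d(q-2)}{4}}}{q}\,\rho_0^{\frac{d(q-2)-4}{4}}+\tfrac{1}{2^*\mathcal S^{\frac{2^*}{2}}}\,\rho_0^{\frac{2^*-2}{2}} .
\]
Subtracting the last term from both sides shows $\beta_0=\tfrac{C_{d,q}^q\,a_0^{\frac{2q-d(q-2)}{4}}}{q}\,\rho_0^{\frac{d(q-2)-4}{4}}>0$, since $a_0,\rho_0>0$ (the sign of the exponent $d(q-2)-4$, negative because $q<2+\tfrac4d$, is irrelevant once $\rho_0>0$). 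This gives the claimed inequality with this explicit $\beta_0$.

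The only genuinely delicate point is the last one, namely ensuring $\beta_0>0$ rather than merely $\beta_0\ge 0$; this is not an estimate but a bookkeeping identity, and it rests entirely on the fact that $\rho_0$ is chosen so that $f(a_0,\rho_0)=0$ together with the strict positivity of the subcritical (Gagliardo--Nirenberg) term in $f$. Everything else is a routine application of the Sobolev and H\"older inequalities already recorded in the excerpt, so no further technical obstacle is anticipated.
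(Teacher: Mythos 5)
Your proof is correct and follows essentially the same route as the paper's: apply H\"older plus Sobolev to absorb the potential term, use Sobolev again on the critical term and factor out $\|\nabla v_n\|_2^2$ using $\|\nabla v_n\|_2^2<\rho_0$, throw the $q$-term into the $o_n(1)$ error, and then identify $\beta_0$ with the Gagliardo--Nirenberg piece of $f(a_0,\rho_0)=0$ to get strict positivity. Your write-up is in fact slightly cleaner than the paper's, which contains an apparent typo (the exponent on $\mathcal{S}$ silently changes from $\tfrac{2^*}{2}$ to $\tfrac{2^*-2}{2}$ in the final displayed line); you keep $\mathcal{S}^{-2^*/2}$ throughout, consistent with the definition of $f$.
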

\begin{proof}
Indeed, using the Sobolev inequality, we have
\begin{eqnarray*}
I\left(v_n\right) & =&\frac{1}{2}\|\nabla v_n\|_2^2+\frac{1}{2}\int_{\mathbb{R}^d}V(x)v_n^2dx-\frac{1}{2^*}\|v_n\|_{2^*}^{2^*}+o_n(1) \\
& \geq& \frac{1}{2}\left(1-\left\|V_{-}\right\|_{\frac{d}{2}} \mathcal{S}^{-1}\right)\left\|\nabla v_n\right\|_2^2-\frac{1}{2^*} \frac{1}{\mathcal{S}^{\frac{2^*}{2}}}\left\|\nabla v_n\right\|_2^{2^*}+o_n(1) \\
& =&\left\|\nabla v_n\right\|_2^2\left[\frac{1}{2}\left(1-\left\|V_{-}\right\|_{\frac{d}{2}} \mathcal{S}^{-1}\right)-\frac{1}{2^*} \frac{1}{\mathcal{S}^{\frac{2^*}{2}}}\left\|\nabla v_n\right\|_2^{2^*-2}\right]+o_n(1) \\
& \geq&\left\|\nabla v_n\right\|_2^2\left[\frac{1}{2}\left(1-\left\|V_{-}\right\|_{\frac{d}{2}} \mathcal{S}^{-1}\right)-\frac{1}{2^*} \frac{1}{\mathcal{S}^{\frac{2^*-2}{2}}} \rho_0^{\frac{2^*-2}{2}}\right]+o_n(1).
\end{eqnarray*}
Now, since $f\left(a_0, \rho_0\right)=0$, we have that
\begin{equation*}
  \beta_0:=\frac{1}{2}\left(1-\left\|V_{-}\right\|_{\frac{d}{2}} \mathcal{S}^{-1}\right)-\frac{1}{2^*} \frac{1}{\mathcal{S}^{\frac{2^*-2}{2}}} \rho_0^{\frac{2^*-2}{2}}=\frac{C_{d, q}^q a_0^{\frac{2 q-d(q-2)}{4}}}{q}\rho_0^{\frac{d(q-2)-4}{4}}>0,
\end{equation*}
which completes the proof.
\end{proof}
\begin{lemma}\label{L8.7}
For any $a \in\left(0, \varepsilon_0\right)$, let $\left(u_n\right) \subset \mathbf{B}_{\rho_0}$ be such that $\left\|u_n\right\|_2^2 \rightarrow a$ and $F_\mu\left(u_n\right) \rightarrow m(a)$. Then, there exist a $\beta_1>0$ and a sequence $\left(y_n\right) \subset \mathbb{R}^d$ such that
\begin{equation}\label{eq8.13}
  \int_{B\left(y_n, R\right)}\left|u_n\right|^2 d x \geq \beta_1>0  \text { for some } R>0.
\end{equation}
\end{lemma}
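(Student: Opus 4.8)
The plan is to argue by contradiction via Lions' vanishing lemma. First note that the sequence $(u_n)$ is bounded in $H^1(\mathbb{R}^d)$: by assumption $u_n\in\mathbf{B}_{\rho_0}$, so $\|\nabla u_n\|_2^2<\rho_0$, while $\|u_n\|_2^2\to a$ bounds the $L^2$-norm. Assume, for contradiction, that \eqref{eq8.13} fails; equivalently, for every $R>0$ one has $\lim_{n\to\infty}\sup_{y\in\mathbb{R}^d}\int_{B(y,R)}|u_n|^2\,dx=0$. By the classical vanishing lemma of Lions, this forces $u_n\to0$ strongly in $L^p(\mathbb{R}^d)$ for every $p\in(2,2^*)$. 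Since $2<q<2+\frac{4}{d}$ and $d\ge4$, we have $q<2+\frac{4}{d}<\frac{2d}{d-2}=2^*$, hence in particular $\|u_n\|_q\to0$.

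Next I would feed this into Lemma \ref{L8.6}. Because $(u_n)\subset\mathbf{B}_{\rho_0}$ and $\|u_n\|_q\to0$, that lemma provides a constant $\beta_0>0$ with $I(u_n)\ge\beta_0\|\nabla u_n\|_2^2+o_n(1)\ge o_n(1)$, and hence $\liminf_{n\to\infty}I(u_n)\ge0$. On the other hand, the hypothesis $I(u_n)\to m(a)$ together with Lemma \ref{L8.4}, which asserts $m(a)<0$ for every $a\in(0,a_0)$, gives $\lim_{n\to\infty}I(u_n)=m(a)<0$. This is a contradiction. Therefore \eqref{eq8.13} cannot fail: there exist $R>0$ and $2\beta_1:=\limsup_{n\to\infty}\sup_{y\in\mathbb{R}^d}\int_{B(y,R)}|u_n|^2\,dx>0$, and, passing to a subsequence along which the supremum exceeds $\beta_1$ and choosing $y_n$ to nearly realize it, we obtain $\int_{B(y_n,R)}|u_n|^2\,dx\ge\beta_1$ for all $n$.

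The only genuinely delicate point is the invocation of Lions' lemma: one must verify the $H^1$-boundedness of $(u_n)$ (done above) and, more importantly, that the exponent $q$ lies strictly below the Sobolev-critical exponent $2^*$, so that the vanishing of the uniform local $L^2$-mass upgrades to $L^q$-convergence; this is precisely the chain $q<2+\frac{4}{d}<2^*$, valid under the standing hypotheses $2<q<2+\frac{4}{d}$ and $d\ge4$. Everything else is a routine combination of Lemmas \ref{L8.4} and \ref{L8.6}, so I expect no further obstacles.
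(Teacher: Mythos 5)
Your proof is correct and follows essentially the same route as the paper's: argue by contradiction, use $H^1$-boundedness and Lions' vanishing lemma to get $\|u_n\|_q\to0$, then combine Lemma \ref{L8.6} with Lemma \ref{L8.4} ($m(a)<0$) to reach a contradiction.
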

\begin{proof}
By contradiction, assume that \eqref{eq8.13} does not bold. Since ( $u_n$ ) $\subset B_{p_0}$ and $\left\|u_n\right\|_2^2 \rightarrow a$, the sequence $\left(u_n\right)$ is bounded in $H^1(\mathbb{R}^d)$. From Lemma I.1 in \cite{PLL1984} and $2<q<2^*$, we deduce that $\left\|u_n\right\|_q \rightarrow 0$ as $n \rightarrow \infty$. Hence, Lemma \ref{L8.6} implies that $I\left(u_n\right) \geq o_n(1)$. This contradicts the fact that $m(a)<0$ and the lemma follows.
\end{proof}
\begin{lemma}\label{L8.8}
For any $a \in\left(0, a_0\right)$, if $\left(u_n\right) \subset \mathbf{B}_{\rho_0}$ is such that $\left\|u_n\right\|_2^2 \rightarrow a$ and $I\left(u_n\right) \rightarrow m(a)$ then, up to translation, $u_{n} \xrightarrow{H^1(\mathbb{R}^d)} u \in \mathcal{M}_a$. In particular the set $\mathcal{M}_c$ is compact in $H^1(\mathbb{R}^d), u_\rho$ to translation.
\end{lemma}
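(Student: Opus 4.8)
The plan is a concentration--compactness argument in the spirit of Cazenave--Lions: use the (strict) sub-additivity of $m(\cdot)$ from Lemma \ref{L8.5} to rule out dichotomy, and the coercivity-type Lemma \ref{L8.6} to pass from weak to strong convergence. First I note that $(u_n)$ is bounded in $H^1(\mathbb{R}^d)$, since $\|\nabla u_n\|_2^2<\rho_0$ and $\|u_n\|_2^2\to a$. By Lemma \ref{L8.7} there are $R>0$, $\beta_1>0$ and translations $(y_n)\subset\mathbb{R}^d$ with $\int_{B(y_n,R)}|u_n|^2\,dx\geq\beta_1$. Replacing $u_n$ by $u_n(\cdot+y_n)$ --- which leaves $\|u_n\|_2^2$, $\|\nabla u_n\|_2^2$ and $I(u_n)$ unchanged --- and passing to a subsequence, we may assume $u_n\rightharpoonup u$ in $H^1$, $u_n\to u$ in $L^2_{\mathrm{loc}}$ and a.e.; the lower bound forces $u\neq0$, hence $\|\nabla u\|_2>0$. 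Set $b:=\|u\|_2^2\in(0,a]$ and $w_n:=u_n-u\rightharpoonup0$.

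Next I carry out the Brezis--Lieb splitting: as $n\to\infty$,
$$
\|u_n\|_2^2=b+\|w_n\|_2^2+o_n(1),\qquad \|\nabla u_n\|_2^2=\|\nabla u\|_2^2+\|\nabla w_n\|_2^2+o_n(1),
$$
$$
\|u_n\|_q^q=\|u\|_q^q+\|w_n\|_q^q+o_n(1),\qquad \|u_n\|_{2^*}^{2^*}=\|u\|_{2^*}^{2^*}+\|w_n\|_{2^*}^{2^*}+o_n(1),
$$
while the potential term does not split: $|u_n|^2\rightharpoonup|u|^2$ and $|w_n|^2\rightharpoonup0$ in $L^{d/(d-2)}(\mathbb{R}^d)$ (a.e.\ convergence plus $L^{d/(d-2)}$-boundedness), so pairing with $V\in L^{d/2}$ gives $\int_{\mathbb{R}^d}V|w_n|^2\,dx\to0$ and $\int_{\mathbb{R}^d}V|u_n|^2\,dx\to\int_{\mathbb{R}^d}V|u|^2\,dx$. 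Writing $\widetilde I(v):=\tfrac12\|\nabla v\|_2^2-\tfrac1q\|v\|_q^q-\tfrac1{2^*}\|v\|_{2^*}^{2^*}$, these identities yield $I(u_n)=I(u)+\widetilde I(w_n)+o_n(1)$ and $I(w_n)=\widetilde I(w_n)+o_n(1)$. From $\|\nabla u_n\|_2^2<\rho_0$ and $\|\nabla u\|_2^2>0$ we get $w_n\in\mathbf{B}_{\rho_0}$ for $n$ large, and weak lower semicontinuity gives $\|\nabla u\|_2^2\leq\rho_0$; the boundary case $\|\nabla u\|_2^2=\rho_0$ is ruled out because it would force $\|\nabla w_n\|_2\to0$, hence $\widetilde I(w_n)\to0$ by \eqref{eq8.4} and $I(u)=m(a)<0$, contradicting $I(u)\geq\rho_0 f(b,\rho_0)>0$ for $u\in\partial\mathbf{V}(b)$ (Lemma \ref{L8.3}, using that $a\mapsto f(a,\rho_0)$ is non-increasing and $f(a,\rho_0)>0$ for $a<a_0$). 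Thus $u\in\mathbf{V}(b)$ and $I(u)\geq m(b)$.

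Then I exclude the partial-mass scenario $b<a$. In that case $a-b\in(0,a)\subset(0,a_0)$ and $\|w_n\|_2^2\to a-b>0$; the rescaled functions $\hat w_n:=\sqrt{(a-b)/\|w_n\|_2^2}\,w_n\in S(a-b)$ lie in $\mathbf{V}(a-b)$ for $n$ large, since the rescaling factor tends to $1$ while $\|\nabla w_n\|_2^2\leq\rho_0-\|\nabla u\|_2^2+o_n(1)$ with $\|\nabla u\|_2^2>0$. Hence $I(\hat w_n)\geq m(a-b)$ and $I(\hat w_n)=I(w_n)+o_n(1)=\widetilde I(w_n)+o_n(1)$, so that
$$
m(a)=\lim_n I(u_n)=I(u)+\lim_n\widetilde I(w_n)\geq m(b)+m(a-b).
$$
Combined with $m(a)\leq m(b)+m(a-b)$ from Lemma \ref{L8.5}(ii), this forces equality throughout, in particular $I(u)=m(b)$, i.e.\ $m(b)$ is attained (by $u\in\mathbf{V}(b)$). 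But then Lemma \ref{L8.5}(ii) yields the \emph{strict} inequality $m(a)<m(b)+m(a-b)$, a contradiction. Hence $b=a$ and $\|w_n\|_2\to0$.

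Finally I upgrade to strong convergence and identify the limit. With $\|w_n\|_2\to0$ and $(w_n)$ bounded in $H^1$, the Gagliardo--Nirenberg inequality \eqref{eq8.4} gives $\|w_n\|_q\to0$; since $w_n\in\mathbf{B}_{\rho_0}$, Lemma \ref{L8.6} gives $\widetilde I(w_n)=I(w_n)+o_n(1)\geq\beta_0\|\nabla w_n\|_2^2+o_n(1)$. Feeding this into $m(a)=\lim_n I(u_n)=I(u)+\lim_n\widetilde I(w_n)$ and using $I(u)\geq m(a)$ (from the second paragraph with $b=a$, since $u\in\mathbf{V}(a)$), we obtain $I(u)=m(a)$ and $\beta_0\|\nabla w_n\|_2^2\leq I(u_n)-I(u)+o_n(1)\to0$, hence $\nabla w_n\to0$ in $L^2$. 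Therefore $u_n(\cdot+y_n)\to u$ strongly in $H^1(\mathbb{R}^d)$ with $u\in S(a)$, $\|\nabla u\|_2^2<\rho_0$ and $I(u)=m(a)$, i.e.\ $u\in\mathcal{M}_a$; applying this to an arbitrary sequence in $\mathcal{M}_a$ (automatically minimizing for $m(a)$) shows that $\mathcal{M}_a$ is compact up to translation. The main obstacle is precisely the exclusion of $b<a$: it needs a clean Brezis--Lieb decomposition of $I$ in which the $L^{d/2}$-potential term sits entirely with the weak limit $u$ --- so that the remainder $\widetilde I(w_n)$ is the potential-free functional to which $m(a-b)$ and Lemma \ref{L8.6} apply --- together with mass-rescalings of $u$ and $w_n$ that keep them strictly inside $\mathbf{B}_{\rho_0}$, which is exactly what lets one invoke $I\geq m(\cdot)$ and the strict sub-additivity of Lemma \ref{L8.5}(ii).
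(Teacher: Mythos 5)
Your proposal follows the same concentration--compactness route as the paper: non-vanishing via Lemma~\ref{L8.7}, pass to a nontrivial weak limit after translation, Brezis--Lieb splitting, exclude the partial-mass scenario with the strict subadditivity of Lemma~\ref{L8.5}(ii), and close with Lemma~\ref{L8.6} to upgrade to strong $H^1$ convergence. Your refinements --- isolating the potential-free functional $\widetilde I$, explicitly ruling out the boundary case $\|\nabla u\|_2^2=\rho_0$, and rescaling $w_n$ to exact mass $a-b$ rather than invoking continuity of $m$ --- are minor variants that, if anything, tidy up small imprecisions in the paper's exposition.

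One point deserves a caveat, though it is a flaw you inherit from the paper rather than introduce: the assertion that replacing $u_n$ by $u_n(\cdot+y_n)$ ``leaves $I(u_n)$ unchanged'' (the paper's ``by the translational invariance'' at \eqref{eq8.16}) is false, since $\int_{\mathbb{R}^d}V(x)|u|^2\,dx$ is not translation-invariant when $V\neq0$. If $|y_n|\to\infty$, then $\int V|u_n|^2\,dx$ and $\int V|u_n(\cdot+y_n)|^2\,dx$ differ in the limit by $\tfrac12\int V|u|^2\,dx$ (one tends to $0$, the other to $\tfrac12\int V|u|^2$), so the translated sequence need not remain minimizing for $m(a)$, and the strong $H^1$ limit $u$ need not lie in $\mathcal{M}_a$. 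A complete proof should show that the translations $y_n$ furnished by Lemma~\ref{L8.7} are necessarily bounded (so that the translation is harmless) or otherwise extract a contradiction from the escape scenario; neither the paper nor your write-up supplies this step, and it is the one genuine gap in an otherwise matching argument.
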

\begin{proof}
It follows from Lemma \ref{L8.7} and Rellich compactness theorem that there exists a sequence $\left(y_n\right) \subset \mathbb{R}^d$ such that
$$
u_n\left(x-y_n\right) \rightharpoonup u_a \neq 0 \text { in } H^1(\mathbb{R}^d).
$$
Our aim is to prove that $w_n(x):=u_n\left(x-y_n\right)-u_a(x) \rightarrow 0$ in $H^1(\mathbb{R}^d)$. Clearly,
\begin{eqnarray*}
\left\|u_n\right\|_2^2 & =&\left\|u_n\left(x-y_n\right)\right\|_2^2\\
&=&\left\|u_n\left(x-y_n\right)-u_a(x)\right\|_2^2+\left\|u_a\right\|_2^2+o_n(1) \\
& =&\left\|w_n\right\|_2^2+\left\|u_a\right\|_2^2+o_n(1).
\end{eqnarray*}
Hence,
\begin{equation}\label{eq8.14}
  \left\|w_n\right\|_2^2=\left\|u_n\right\|_2^2-\left\|u_a\right\|_2^2+o_n(1)=a-\left\|u_a\right\|_2^2+o_n(1).
\end{equation}
By a similar argument,
\begin{equation}\label{eq8.15}
\left\|\nabla w_n\right\|_2^2=\left\|\nabla u_n\right\|_2^2-\left\|\nabla u_a\right\|_2^2+o_n(1).
\end{equation}
By the translational invariance, it holds
\begin{equation}\label{eq8.16}
I\left(u_n\right)=I\left(u_n\left(x-y_n\right)\right)=I\left(w_n\right)+I\left(u_a\right)+o_n(1) .
\end{equation}
Next, we claim that
$$
\left\|w_n\right\|_2^2 \rightarrow 0.
$$
In fact, denote $a_1:=\left\|u_a\right\|_2^2>0$. By \eqref{eq8.14}, if we show that $a_1=a$ then the claim follows. By contradiction, we assume that $a_1<a$. Using \eqref{eq8.14} and \eqref{eq8.15}, we have $\left\|w_n\right\|_2^2 \leq a$ and $\left\|\nabla w_n\right\|_2^2 \leq\left\|\nabla u_n\right\|_2^2<\rho_0$ for $n$ large enough. Thus, we obtain that $w_n \in \mathbf{V}(\left\|w_n\right\|_2^2)$ and $I\left(w_n\right) \geq m\left(\left\|w_n\right\|_2^2\right)$. Recording that $I\left(u_n\right) \rightarrow m(a)$, in view of \eqref{eq8.16}, we have
$$
m(a)=I\left(w_n\right)+I\left(u_a\right)+o_n(1) \geq m\left(\left\|w_n\right\|_2^2\right)+I\left(u_a\right)+o_n(1).
$$
Since the map $a \mapsto m(a)$ is continuous, we deduce that
\begin{equation}\label{eq8.17}
m(a) \geq m\left(a-a_1\right)+I\left(u_a\right).
\end{equation}
We also have that $u_a \in \mathbf{V}\left(a_1\right)$ by the weak limit. This implies that $I\left(u_a\right) \geq m\left(a_1\right)$. If $I\left(u_a\right)>m\left(a_1\right)$, then it follows from \eqref{eq8.17} and Lemma \ref{L8.5}(ii) that
$$
m(a)>m\left(a-a_1\right)+m\left(a_1\right) \geq m\left(a-a_1+a_1\right)=m(a),
$$
which is impossible. Hence, we have $I\left(u_a\right)=m\left(a_1\right)$, namely $u_a$ is a local minimizer on $\mathbf{V}\left(a_1\right)$. So, using Lemma \ref{L8.5}(ii) with the strict inequality, we deduce from \eqref{eq8.17} that
$$
m(a) \geq m\left(a-a_1\right)+I\left(u_a\right)=m\left(a-a_1\right)+m\left(a_1\right)>m\left(a-a_1+a_1\right)=m(a),
$$
which is impossible. Thus, the claim follows and from \eqref{eq8.14} we obtain that $\left\|u_a\right\|_2^2=a$.

Now, in order to complete the proof, we need to show that $\left\|\nabla w_n\right\|_2^2 \rightarrow 0$. Note that, by using \eqref{eq8.15} and $u_a \neq 0$, we have $\left\|\nabla w_n\right\|_2^2 \leq\left\|\nabla u_n\right\|_2^2<\rho_0$ for $n$ large enough, so $\left(w_n\right) \subset B_{\rho_0}$ and it is bounded in $H^1(\mathbb{R}^d)$. Then by using the Gagliardo-Nirenberg inequality and recalling $\left\|w_n\right\|_2^2 \rightarrow 0$, we also have $\left\|w_n\right\|_q^q \rightarrow 0$. Thus, it follows from Lemma \ref{L8.6} that
\begin{equation}\label{eq8.18}
  I\left(w_n\right) \geq \beta_0\left\|\nabla w_n\right\|_2^2+o_n(1) \text { where } \beta_0>0 .
\end{equation}
Now we remember that
$$
I\left(u_n\right)=I\left(u_a\right)+I\left(w_n\right)+o_n(1) \rightarrow m(a) .
$$
Since $u_a \in \mathbf{V}(a)$ by weak limit, we have that $I\left(u_a\right) \geq m(a)$ and hence $I\left(w_n\right) \leq o_n(1)$. Then we get that $\left\|\nabla w_n\right\|_2^2 \rightarrow 0$ by using \eqref{eq8.18}.
\end{proof}
\begin{proof}[\bf Proof of Theorem \ref{t1.3}]
The fact that if $\left(u_n\right) \subset V(a)$ is such that $I\left(u_n\right) \rightarrow m(a)$ then, up to translation, $u_n \rightarrow u \in M_a$ in $H^1(\mathbb{R}^d)$ follows from Lemma \ref{L8.8}. In particular, it insures the existence of a minimizer for $I(u)$ on $\mathbf{V}(a)$ and this minimizer is a ground state.
\end{proof}
\subsection{Orbital stability}
In this subsection, we prove that the set $\mathcal{M}_a$ is orbitally stable.
\begin{lemma}\label{L8.9}
Let $v \in \mathcal{M}_a$. Then, for every $\varepsilon>0$ there exists $\delta>0$ such that:
$$
\forall \varphi \in H^1(\mathbb{R}^d) \text { s.t. }\|\varphi-v\|_{H^1(\mathbb{R}^d)}<\delta \Longrightarrow \sup _{t \in\left[0, +\infty\right)} \mathop{\operatorname{dist}}\limits_{H^1(\mathbb{R}^d)}\left(u_{\varphi}(t), \mathcal{M}_a\right)<\varepsilon,
$$
where $u_{\varphi}(t)$ is the global solution of \eqref{eq8.1} corresponding to the  initial data in $H^1(\mathbb{R}^d)$.
\end{lemma}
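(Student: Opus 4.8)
The plan is to run the classical Cazenave--Lions variational stability argument, combining the conservation laws of \eqref{eq8.1} with the compactness statement of Lemma \ref{L8.8}. I would argue by contradiction: suppose the conclusion fails for some $v\in\mathcal{M}_a$. Then there exist $\varepsilon_0>0$, a sequence $\varphi_n\to v$ in $H^1(\mathbb{R}^d)$, and times $t_n\in[0,+\infty)$ such that
\begin{equation*}
  \operatorname{dist}_{H^1(\mathbb{R}^d)}\bigl(u_{\varphi_n}(t_n),\mathcal{M}_a\bigr)\geq\varepsilon_0\qquad\text{for all }n .
\end{equation*}
Write $w_n:=u_{\varphi_n}(t_n)$. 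Since \eqref{eq8.1} is globally well-posed and conserves mass and energy, and since $\|\cdot\|_2^2$ and the $C^1$ functional $I$ are continuous on $H^1(\mathbb{R}^d)$, we get $\|w_n\|_2^2=\|\varphi_n\|_2^2\to\|v\|_2^2=a$ and $I(w_n)=I(\varphi_n)\to I(v)=m(a)$.

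The key step is to show that the whole trajectory stays trapped inside the well $\mathbf{B}_{\rho_0}$, so that in particular $w_n\in\mathbf{B}_{\rho_0}$. Since $v\in\mathcal{M}_a\subset\mathbf{V}(a)=S(a)\cap\mathbf{B}_{\rho_0}$ we have $\|\nabla v\|_2^2<\rho_0$, hence $\varphi_n\in\mathbf{B}_{\rho_0}$ for $n$ large. Put $a_n:=\|\varphi_n\|_2^2$; for $n$ large $a_n<a_0$, and by Lemmas \ref{L8.1}, \ref{L8.3} and \ref{L8.4} together with the continuity of $a\mapsto\rho_0 f(a,\rho_0)$ there is a constant $c_0>0$ with $\inf_{\partial\mathbf{V}(a_n)}I\geq\rho_0 f(a_n,\rho_0)\geq c_0$ for all large $n$, whereas $I(u_{\varphi_n}(t))=I(\varphi_n)\to m(a)<0$. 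Because $t\mapsto u_{\varphi_n}(t)\in C(\mathbb{R};H^1(\mathbb{R}^d))$, if $\|\nabla u_{\varphi_n}(\cdot)\|_2^2$ first reached the value $\rho_0$ at some time $t^\ast$, then $u_{\varphi_n}(t^\ast)\in\partial\mathbf{V}(a_n)$ and hence $I(\varphi_n)=I(u_{\varphi_n}(t^\ast))\geq c_0>0$, which is impossible for $n$ large. Therefore $u_{\varphi_n}(t)\in\mathbf{V}(a_n)\subset\mathbf{B}_{\rho_0}$ for all $t$, and in particular $w_n\in\mathbf{B}_{\rho_0}$.

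It remains to invoke Lemma \ref{L8.8}: the sequence $(w_n)\subset\mathbf{B}_{\rho_0}$ satisfies $\|w_n\|_2^2\to a$ and $I(w_n)\to m(a)$, so up to translation $w_n\to u$ in $H^1(\mathbb{R}^d)$ for some $u\in\mathcal{M}_a$; consequently $\operatorname{dist}_{H^1(\mathbb{R}^d)}(w_n,\mathcal{M}_a)\to 0$, contradicting the choice of $w_n$, and the lemma follows. I expect the genuine obstacle to lie in the trapping step, specifically in obtaining the energy barrier with a constant $c_0$ uniform in $n$ as $a_n\to a$, and — if one wants to treat genuinely complex-valued $\varphi$ — in first reducing the minimization defining $m(a)$ from complex to real competitors via the diamagnetic inequality (which leaves the mass, the $L^q$ and $L^{2^\ast}$ norms and $\int_{\mathbb{R}^d}V|u|^2\,dx$ unchanged), so that Lemma \ref{L8.8} is applicable.
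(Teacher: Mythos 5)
Your proof is correct and follows the same Cazenave--Lions contradiction argument as the paper: conservation laws keep the trajectory trapped in $\mathbf{B}_{\rho_0}$, and Lemma \ref{L8.8} supplies the compactness that contradicts the assumed failure of stability, so this is essentially the paper's proof. The one place you are more careful than the paper is the trapping step: the paper invokes Lemma \ref{L8.4} directly, which strictly speaking concerns $\partial\mathbf{V}(a)$ with exactly mass $a$, whereas the trajectory $u_{\varphi_n}(t)$ has mass $a_n=\|\varphi_n\|_2^2\neq a$; your argument correctly works with $\partial\mathbf{V}(a_n)$ and extracts a uniform barrier $c_0>0$ from the continuity of $a\mapsto f(a,\rho_0)$ together with Lemma \ref{L8.3}, which is the cleaner way to phrase what the paper implicitly assumes. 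You also flag the real-versus-complex issue (the minimization problem and Lemma \ref{L8.8} are posed on $H^1(\mathbb{R}^d,\mathbb{R})$ while the flow takes complex values), which the paper leaves implicit; the diamagnetic reduction you mention is indeed the standard way to close that gap.
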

\begin{proof}
Suppose the theorem is false, there exists a decreasing sequence $\left(\delta_n\right) \subset \mathbb{R}^{+}$ converging to 0 and $\left(\varphi_n\right) \subset H^1(\mathbb{R}^d)$ satisfying
$$
\left\|\varphi_n-v\right\|_{H^1(\mathbb{R}^d)}<\delta_n,
$$
but
$$
\sup _{t \in\left[0, +\infty\right)} \mathop{\operatorname{dist}}_{H^1(\mathbb{R}^d)}\left(u_{\varphi_n}(t), \mathcal{M}_a\right)>\varepsilon_0
$$
for some $\varepsilon_0>0$. We observe that $\left\|\varphi_n\right\|_2^2 \rightarrow a$ and $I\left(\varphi_n\right) \rightarrow m(a)$ by continuity of $I(u)$. By conservation laws, for $n \in \mathbb{N}$ large enough, $u_{\varphi_n}$ will remains inside of $\mathbf{B}_{\rho_0}$ for all $t \in\left[0, +\infty\right)$. Indeed, if for some time $\bar{t}>0,\ \left\|\nabla u_{\varphi_n}(\bar{t})\right\|_2^2=\rho_0$, then in view of Lemma \ref{L8.4} we have that $I\left(u_{\varphi_n}(\bar{t})\right) \geq 0$ in contradiction with $m(a)<0$. Now let $t_n>0$ be the first time such that $\mathop{\operatorname{dist}}\limits_{H^1(\mathbb{R}^d)}\left(u_{\varphi_n}\left(t_n\right), \mathcal{M}_a\right)=\varepsilon_0$ and set $u_n:=u_{\varphi_n}\left(t_n\right)$. By conservation laws, $\left(u_n\right) \subset \mathbf{B}_{\rho_0}$ satisfies $\left\|u_n\right\|_2^2 \rightarrow a$ and $I\left(u_n\right) \rightarrow m(a)$ and thus, in view of Lemma \ref{L8.8}, it converges, up to translation, to an element of $\mathcal{M}_a$. Since $\mathcal{M}_a$ is invariant under translation this contradicts the equality $\mathop{\operatorname{dist}}\limits_{H^1(\mathbb{R}^d)}\left(u_n, \mathcal{M}_a\right)=\varepsilon_0>0$.
\end{proof}

\end{document}